\newtheorem{theorem}{Theorem}[section]
\newtheorem{lemma}[theorem]{Lemma}
\newtheorem{proposition}[theorem]{Proposition}
\newtheorem{remark}[theorem]{Remark}
\newtheorem{rk&ex}[theorem]{Remarks \& Examples}
\newtheorem{corollary}[theorem]{Corollary}
\def\NN{{\mathbb N}}
\def\RR{{\mathbb R}}
\def\P{{\mathcal P}}
\def\eps{\varepsilon}
\def\to{\rightarrow}
\def\supp{\mbox{supp}\,}
\def\vphi{\varphi}
\def\na{\nabla}
\def\pa{\partial}
\begin{document}
\title{On the relationship between 
the thin film equation and Tanner's law}
\author{M. G. Delgadino\thanks{Partially supported by EPSRC grant number EP/P031587/1 and the Research Impulse Award at Imperial College} \, and A. Mellet\thanks{Partially supported by NSF Grant DMS-1501067}}

\date{\small Department of Mathematics\\
University of Maryland \\
College Park MD 20742\\
USA}
 \date{}

\maketitle
\begin{abstract}
    This paper is devoted to the asymptotic analysis of a thin film equation which describes the evolution of a thin liquid droplet on a solid support driven by capillary forces.
    We propose an analytic framework to rigorously  investigate the connection between this model and Tanner's law \cite{tanner1979spreading} which claims: \textit{the edge velocity of a spreading thin film on a pre-wetted solid is approximately proportional to the cube of the slope at the inflection}. 
More precisely, we investigate the asymptotic limit of 
 the thin film equation when the  slippage coefficient is small and at an appropriate time scale (see Equation \eqref{eq:tfe00}). We   show that the evolution of the droplet can be  approximated by a moving free boundary model (the so-called quasi-static approximation) and we present some results pointing to the validity of Tanner's law in that regime.
 Several papers \cite{giacomelli2016rigorous,GO,glasner2003spreading} have previously investigated a similar connection between the thin film equation and Tanner's law either formally or for particular solutions.
Our main contribution is the introduction of a new approach to systematically study this problem
by finding an equation for the evolution of the apparent support of the droplet (described mathematically by a nonlinear function of the solution). 
\end{abstract}

\section{Introduction}

\paragraph{The lubrication approximation and the  thin film equation.}
The thin film equation  describes the spreading of small liquid droplets on a solid (planar) surface
in the context of the lubrication approximation. 
This approximation is valid for thin droplets (the vertical dimension is much smaller than the horizontal one) when the evolution is dominated by the effects of surface tension and viscosity.

We briefly recall here the main steps of the derivation of the thin film equation. 
We refer to  \cite{Greenspan} for further details and for the physical dimensions of the coefficients (we also refer to \cite{GO2003,masmoudi1,masmoudi2} for a rigorous investigation of the lubrication approximation and derivation of the thin film equation for a liquid droplet in a Hele-Shaw cell).  
We present this derivation in dimension~$2$ even though
the rest of the paper will focus on the one dimensional case. 
We denote by $u(t,x)$ the height of the fluid (with $t>0$ and $x\in \RR^2$ or $\RR$) and by  $v(t,x,z)$ the horizontal velocity of the fluid ($z$ denotes the vertical variable).
The depth-averaged equation of mass conservation is
\begin{equation}\label{eq:continuity}
\frac{\pa u}{\pa t} + \mbox{div}_x(u\overline v)=0, \qquad \overline v(t,x) = \frac{1}{u(t,x)}\int_0^{u(t,x)} v(t,x,z)\, dz
\end{equation}
where the horizontal velocity $v$ satisfies the simplified Stokes equation
\begin{equation}\label{eq:stokes} 
\mu \frac{\pa^2 v}{\pa z^2} = \na p,
\end{equation}
where $\mu$ is the viscosity of the fluid. The pressure $p(t,x,z)$ is assumed to depend only on the $x$ variable and is  determined by surface tension  along the free surface $z=u(t,x)$ of the drop. Approximating the mean-curvature by the Laplacian, we find:
\begin{equation} \label{eq:pressure} 
p(t,x) = -\sigma \Delta_x u(t,x),
\end{equation}
where $\sigma$ is  the \textit{interfacial tension}. Finally, the velocity of the fluid  $v$ 
satisfies $\frac{\pa v}{\pa z}|_{z=u(t,x)} = 0$ (no horizontal shear stress along the free surface) while 
along the solid support $\{z=0\}$ it satisfies a slip condition:
\begin{equation}\label{eq:slip} 
\kappa(u) \frac{\pa v}{\pa z} (t,x,0) = v(t,x,0), \qquad \kappa(u)=\Lambda u^{n-2},
\end{equation}
where $\Lambda$ is the \textit{slip coefficient}. The case $n=2$ is usually refered as the Navier Slip condition. Integrating \eqref{eq:stokes} and using \eqref{eq:slip} leads to the following Darcy's law for the averaged velocity:
$$
\overline v = -\frac{1}{\mu} \left(\frac{u^2}{3} + \kappa(u) u \right)\na_xp,
$$
which together with \eqref{eq:continuity} and \eqref{eq:pressure}  yields the thin film equation
\begin{equation}\label{eq:thinfilm}
\frac{\pa u}{\pa t} + \frac{\sigma}{3 \mu}  \mbox{div}_x( (u^3+3 \Lambda u^n )\na \Delta u)=0.
\end{equation}

The derivation of this equation makes it clear that \eqref{eq:thinfilm} holds over the support of the drop $\{u>0\}$, thus leading to a delicate free boundary problem since boundary conditions must be imposed on $\pa\{u>0\}$.
However, it is classical, as a first step, to assume that \eqref{eq:thinfilm} holds in a fixed domain $\Omega$. This amounts to assuming that the solid substrate is wet even where the height of the fluid is zero. This is known as the {\it complete wetting regime}, and it is also the setting of this paper.
In  dimension one, 
the mathematical analysis of the thin film equation in the complete wetting regime goes back to the early 90's (see \cite{BF90}, \cite{BP96} for the one dimensional case. For results
in higher dimensions, we refer for instance to the contributions of Gr\"un \cite{Grun95,Grun04}).

We recall   (see  Bernis \cite{Bernis1,Bernis2} and Gr\"un \cite{Grun02,Grun03})  that the solutions of \eqref{eq:thinfilm} have a finite speed of propagation of the support (in particular solutions
with compactly supported initial data remain compactly supported) and that they satisfy the zero contact angle condition $|\na u |= 0$ on $\pa\{u>0\}$.

\paragraph{Previous results.}
When the \textit{slip coefficient} vanishes ($\Lambda=0$), the boundary condition \eqref{eq:slip} reduces to the classical no-slip condition $v(t,x,0)=0$. However, the corresponding thin film equation \eqref{eq:thinfilm}, with a mobility coefficient given by $u^3$, is classically ill-posed in the sense that the motion of the contact line $\pa\{u>0\}$ would require an infinite dissipation of energy (this fact was first formally discussed in \cite{HS}). This suggests that when $\Lambda$ is small, the contact line moves very slowly. 
It is the goal of this paper to investigate the asymptotic behavior of the solutions of \eqref{eq:thinfilm} when $\Lambda\ll1$. We thus introduce a small parameter $\eps$ such that:
\begin{equation}\label{eq:lambda}
\Lambda = \eps^{3-n}, \qquad \eps\ll1.
\end{equation}
Several work   (\cite{giacomelli2016rigorous,GO,glasner2003spreading}) have  investigated similar regimes in one dimension and  it has been shown that 
in order to observe the motion of the contact line at finite speed when $\eps$ goes to zero, we have to rescale time as follows:
\begin{equation}\label{eq:timescale}
t ' = t \frac{\sigma}{3 \mu}  |\ln \eps|^{-1}.
\end{equation} 
With \eqref{eq:lambda} and after change of variable \eqref{eq:timescale}, equation \eqref{eq:thinfilm} 
becomes in the one dimensional case:
\begin{equation}\label{eq:tfe00}
 |\ln(\eps)|^{-1} \pa_t u + \pa_x((\eps^{3-n} u^n +u^3) \pa_{xxx}u )=0.
\end{equation}
This paper is thus devoted to studying the behavior of the solutions of this equation when $\eps\ll1$.

\medskip

\begin{remark}\label{rem:1}
The  time scale \eqref{eq:timescale} is established by K. Glasner in \cite{glasner2003spreading} via energy consideration.
A more precise result is obtained for particular solutions by  L. Giacomelli and F. Otto in \cite{GO} and  by L. Giacomelli, M. Gnann and F. Otto in \cite{giacomelli2016rigorous}.
To state the result of \cite{GO} in our framework, we first note that if $u^\eps(t,x)$ is a solution of \eqref{eq:tfe} then the rescaled function $h$ defined by
\begin{equation}\label{eq:GOscalingeps}
    u^\eps(t,x) = \eps h(\eps^7 |\ln\eps| t,\eps x)
\end{equation}
solves
\begin{equation}\label{eq:GOtfe}
\pa_t h + \pa_x((h^n+h^3)\pa_{xxx}h)=0, \qquad h^\eps_{in}(x) = \frac{1}{\eps} u_{in}(\frac x\eps).    
\end{equation}
Using this, it is easy to check that Corollary 2.1 in  \cite{GO}  states that
if the initial condition has the form
$ u_{in}^\eps(x) = \eps h_{in}(\eps x)$
for some fixed function $h_{in}(x)$ (that is when $u_{in}^\eps$ is a very thin droplet), then for $\eps$ small enough, the solution $u^\eps$ of  \eqref{eq:tfe00} satisfies
\begin{equation}\label{eq:GO}
 C^{-1}\left(  t \frac{|\ln\eps|}{\ln t + \ln(\eps^7|\ln\eps| )}  \right)^{1/7} \leq  |\{u^\eps (t,\cdot)>\eps\}| \leq C\left(  t \frac{|\ln\eps|}{\ln t + \ln(\eps^7|\ln\eps| )}  \right)^{1/7}
\end{equation}
for time 
$ C\leq t  \leq \frac{C}{\eps^7 |\ln\eps|}.$
This result shows that for small $\eps$, the effective support of the drop (described here as the set where $\{u^\eps> \eps\}$) will grow with finite speed (and will have size of order $t^{1/7}$).
\end{remark}

\paragraph{The quasi-static model and Tanner's law.}
Note that without performing the change of variable  \eqref{eq:timescale} 
 the scaling of equation  \eqref{eq:tfe00} also corresponds to 
 $$ \frac{\sigma}{\mu} \sim |\ln\eps| \gg1 .$$
So equation  \eqref{eq:tfe00}   describes the asymptotic regime corresponding to capillary forces that are much stronger than viscous forces.
In that regime, we expect  the free surface of the drops to quickly relax to its equilibrium shape given by $\na p=0$ (this will be made rigorous using the dissipation of energy in the proofs).
With the approximation \eqref{eq:pressure}, we find
\begin{equation*} 
-\Delta u(t) = \lambda(t) \mbox{ in } \{u(t)>0\}, 
\end{equation*}
where $\lambda(t)$ can be seen as a Lagrange multiplier for the volume constraint and is locally constant on $\{u(t)>0\}$.
Remark \ref{rem:1} above suggests that the contact line $\pa\{u(t)>0\}$ is moving with finite speed, and the issue at the center of this paper is to determine the velocity law for this contact line.

More precisely we want to show that the regime  $\eps \ll1$ for the thin film equation \eqref{eq:tfe00} is described by a  moving free boundary problem of the form
$$\left\{
\begin{array}{ll}
-\Delta u(t) = \lambda(t) & \mbox{ in } \{u(t)>0\} \\[5pt]
 V = F(|\na u|)& \mbox{ on }\pa \{u(t)>0\}
\end{array}
\right.
$$
where $V$ denotes the normal velocity of the moving boundary $\{u(t)>0\}$ and $ \lambda(t)$ is the Lagrange multiplier.
In the context of the moving contact line problem for capillary drops, this equation is referred to as the quasi-static model and it is classically used to describe the motion of liquid drops when capillary forces dominate the dynamic in the bulk and intermolecular forces at the contact line are responsible for the motion of the drop.
In particular the velocity of the contact line is a function of 
the gradient $|\na u|$ on $\pa \{u(t)>0\}$ which is a substitute for the contact angle.
%The quasi-static model is a model that has been proposed to study of motion of liquid droplets in such a  regime \cite{GlasnerKim}. It assumes that the motion of the drop is primarily driven by the behavior of the contact line $\pa\{u>0\}$. Equation \eqref{eq:quasi} is thus supplemented by a free boundary velocity law
%$$ V = F(|\na u|)$$ 
%where $V$ denotes the normal velocity of the moving boundary $\{u(t)>0\}$ and $|\na u|$ represents the contact angle (which is valid for thin drops).
Several possible choice for the function $F$ have been proposed. In the complete wetting regime, the choice
\begin{equation}\label{eq:tannerlaw} 
V = \alpha |\na u|^3,
\end{equation}
for some coefficient $\alpha$, is motivated by the law proposed by Tanner in \cite{tanner1979spreading}.
We refer to \cite{GlasnerKim} for a mathematical analysis of this model.

In one space dimension, the connection between the thin-film equation and Tanner's law has first been rigorously studied in \cite{giacomelli2016rigorous}. In \cite{giacomelli2016rigorous}, the authors find a travelling wave solution of \eqref{eq:GOtfe} that satisfies Tanner's law with an appropriate logarithmic correction. We note that under the inverse scaling of \eqref{eq:GOscalingeps}, when we take $\eps\to0$ the travelling wave of \cite{giacomelli2016rigorous} converges formally to a travelling wave of the quasi-static approximation, which satisfies Tanner's law with no logarithmic correction.

\medskip

The main result of this paper will show that the evolution of the contact line is indeed approximated by Tanner's law \eqref{eq:tannerlaw} when one considers the  thin film equation in the complete wetting regime \eqref{eq:tfe00} with $\eps\ll1$ and in dimension one.
For simplicity, we consider equation \eqref{eq:tfe00} on a bounded set $\Omega\subset \RR$ and supplemented by  boundary and initial conditions:
\begin{equation}\label{eq:tfe}
 |\ln(\eps)|^{-1} \pa_t u + \pa_x((\eps^{3-n} u^n +u^3) \pa_{xxx}u )=0 \qquad \mbox{ in } (0,\infty)\times\Omega
\end{equation}
\begin{equation}\label{eq:bc}
(\eps^{3-n} u^n +u^3) \pa_{xxx}u  = 0 ,\qquad u_x =0 \qquad \mbox{on } (0,\infty)\times \pa\Omega
\end{equation}
$$ 
u^\eps(0,x)  = u_{in}(x) \qquad \mbox{ in } \Omega.
$$
The first boundary condition in \eqref{eq:bc} is a null flux condition, which will ensure conservation of mass. The second boundary condition can be seen as a contact angle condition for the fluid in contact with some boundary walls.

\paragraph{Apparent support and weak formulation of a free boundary problem.}
It is relatively simple to show that the limit $u$ of $u^\eps$ will satisfy $u_{xxx}=0$ in $\{u>0\}$. The difficulty is to characterize the motion of the support $\{u>0\}$.
The main contribution of this paper is the introduction of a new approach to study this problem. The idea  is to track the motion of the apparent support, which is approximately given by $\{u^\eps>|\ln(\eps)|^{-1}\}$, by  defining a specific sequence of smooth approximations of the indicator function $\chi_{\{s>0\}}$. To this end we introduce the following function $B$:
$$ B(s) := \int_0^s \int_r^\infty \frac{1}{v^{n-1}+v^2}\, dv\, dr.$$
With this definition, it is clear that $B(0)=0$ and 
$$ B''(s) = -\frac{1}{s^{n-1}+s^2}.$$

The function 
\begin{equation*}
B^\eps(s) = \frac{1}{|\ln\eps|} B(s/\eps)    
\end{equation*}
then satisfies
\begin{equation}\label{eq:propertyBeps1}
{B^\eps}''(s)=-\frac{1}{|\ln(\eps)|}\frac{1}{s^2+\eps^{3-n}s^{n-1}}
\end{equation}
and 
\begin{equation*}
B^\eps(0)=0, \qquad  \lim_{\eps\to 0}B^\eps(s)=1\mbox{ $\,\forall s>0$}.
\end{equation*}

To show this last property, we can note that 
$B'(s) = \int_s^\infty \frac{1}{v^{n-1}+v^2}\, dv \sim \frac{1}{s}$ as $s\to \infty$,
and so
$$\lim_{s\to \infty} s B'(s) =1.$$
L'Hospital's Rule then implies
$$ 
\lim_{\eps\to 0 } B^\eps (s) = \lim_{N\to \infty } \frac{B(N s)}{\ln N} =   \lim_{N\to \infty } Ns B'(N s)=  1
\qquad\mbox{for all $s>0$.}$$

\begin{remark}
We have the explicit formulas
$$
\begin{array}{rccr}
\displaystyle B^\eps(s)&=&\displaystyle\frac{1}{|\ln\eps|}\left[ \frac{s}{\eps} \arctan \left(\frac{\eps}{s}\right) + \frac 1 2\ln\left(1+\left(\frac{s}{\eps}\right)^2\right)\right]& \mbox{for $n=1$,}\\
\displaystyle  B^\eps(s)&=&\displaystyle\frac{1}{|\ln\eps|}\left[ \frac{s}{\eps} \ln \left(\frac{s+\eps}{s}\right) +\ln\left(1+\frac{s}{\eps}\right)\right]& \mbox{for $n=2$}.
\end{array}
$$
\end{remark}

Finally, given $u^\eps$ solution of \eqref{eq:tfe00}, 
we introduce the  function 
\begin{equation}\label{eq:rhoeps}
\rho^\eps(t,x)=B^\eps(u^\eps (t,x)).
\end{equation}
This function  can be viewed as an approximation of the characteristic function of the support of $u^\eps$. Indeed, we have 
\begin{equation*}
\rho^\eps(t,x)\approx\begin{cases}
0&\mbox{$u^\eps(t,x)\leq \eps$,}\\
1-a&\mbox{$u^\eps(t,x)\approx \eps^a$ with $a\in(0,1)$,}\\
1&\mbox{$u^\eps(t,x)\gg |\ln(\eps)|^{-1}$}.
\end{cases}
\end{equation*}
In particular, we have $\lim_{\eps\to 0}\rho^\eps (0,x) = \chi_{\{u_{in}>0\}}(x)$.
\medskip

The reason for the particular definition of $B^\eps(s)$, and the key to our approach, is the fact that
a straightforward computation using \eqref{eq:tfe} and \eqref{eq:propertyBeps1} leads to the following equation for $\rho^\eps$:
\begin{equation}\label{eq:tfed} 
\pa_t \rho^\eps = T^\eps+\pa_x R^\eps 
\end{equation}
where
\begin{equation}\label{eq:RTe}
\begin{array}{ll}
T^\eps  &\displaystyle = - u^\eps  u_x^\eps u_{xxx}^\eps\\[5pt]
R^\eps &\displaystyle = -|\ln(\eps)| B^{\eps\prime}(u^\eps)(\eps^{3-n}{u^{\eps}}^{(n-1)} +{u^{\eps}}^2) u^\eps u_{xxx}^\eps .
\end{array}
\end{equation}
Passing to the limit in \eqref{eq:tfed} will thus characterize the motion of the support of the drop in the limit $\eps\to0$.
We will easily show that $R^\eps$ goes to zero in an appropriate functional space, but the difficulty is to identify the limit of the distribution $T^\eps$ (since we expect $u_{xx}$ to be constant in $\{u>0\}$, it is reasonable to expect that $\lim T^\eps$ will be a distribution supported on $\pa\{u>0\}$).

\medskip

\paragraph{Main results.}
We now present our main results, Throughout this section, $u^\eps(t,x)$ denotes a weak solution of \eqref{eq:tfe00} satisfying some appropriate a priori estimates which are detailed in Section~\ref{sec:apriori}. The existence of such a solution is proved in Proposition \ref{prop:exist}.
\medskip

Classically, a first and crucial step in the study of singular limits of partial differential equations leading to free boundary problems is to establish the optimal regularity of the solutions uniformly in $\eps$.
In our case, since the gradient of the limit of $u^\eps$ is expected to be discontinuous at the boundary of the support $\{u>0\}$, the optimal regularity in space is the Lipschitz regularity.
We will prove:
\begin{proposition}[Lipschitz regularity in space]\label{prop:lip}
For all $\eps>0$, the solution $u^\eps(t,x)$ of \eqref{eq:tfe00} (whose existence is provided by Proposition \ref{prop:exist}) satisfies: 
\begin{equation}\label{eq:Lipu} 
 \| u^\eps_x \|_{  L^4	((0,\infty);L^\infty(\Omega))} \leq C
 \end{equation}
for a constant $C$ independent of $\eps$.
\end{proposition}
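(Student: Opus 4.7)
The strategy is to reduce \eqref{eq:Lipu} to two uniform-in-$\eps$ a priori bounds that are established in Section~\ref{sec:apriori}. The first is the energy estimate: testing \eqref{eq:tfe} against $-u^\eps_{xx}$ and using the boundary conditions \eqref{eq:bc} yields
\begin{equation*}
|\ln(\eps)|^{-1}\frac{d}{dt}\frac{1}{2}\|u^\eps_x(t)\|_{L^2(\Omega)}^2 + \int_\Omega \bigl(\eps^{3-n}(u^\eps)^n + (u^\eps)^3\bigr)(u^\eps_{xxx})^2\,dx = 0,
\end{equation*}
so that $\|u^\eps_x\|_{L^\infty((0,\infty);L^2(\Omega))} \leq \|u_{in,x}\|_{L^2(\Omega)}$ uniformly in $\eps$. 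The second is an entropy estimate of Bernis--Friedman type: choosing $G^\eps$ with $G^{\eps\prime\prime}(s) = 1/(\eps^{3-n}s^n+s^3)$ and testing \eqref{eq:tfe} against $G^{\eps\prime}(u^\eps)$, the mobility coefficient collapses so that the dissipation term reduces to $\int u^\eps_x u^\eps_{xxx}\,dx = -\int (u^\eps_{xx})^2\,dx$ after integrating by parts and using \eqref{eq:bc}; one obtains
\begin{equation*}
|\ln(\eps)|^{-1}\frac{d}{dt}\int_\Omega G^\eps(u^\eps)\,dx + \|u^\eps_{xx}(t)\|_{L^2(\Omega)}^2 = 0,
\end{equation*}
which integrated in time gives $\int_0^\infty \|u^\eps_{xx}(t)\|_{L^2(\Omega)}^2\,dt \leq |\ln(\eps)|^{-1}\int_\Omega G^\eps(u_{in})\,dx$.

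Given these two bounds, the Lipschitz estimate is a standard one-dimensional interpolation. Since the boundary condition $u^\eps_x = 0$ on $\pa\Omega$ from \eqref{eq:bc} provides a point $x_0\in\pa\Omega$ at which $u^\eps_x(t,x_0) = 0$, the fundamental theorem of calculus yields, for every $x\in\Omega$,
\begin{equation*}
u^\eps_x(t,x)^2 = 2\int_{x_0}^x u^\eps_x u^\eps_{xx}\,dx' \leq 2\,\|u^\eps_x(t)\|_{L^2(\Omega)}\|u^\eps_{xx}(t)\|_{L^2(\Omega)}.
\end{equation*}
Squaring and integrating in time, then using the two a priori bounds, we obtain
\begin{equation*}
\int_0^\infty \|u^\eps_x(t)\|_{L^\infty(\Omega)}^4\,dt \leq 4\,\|u^\eps_x\|_{L^\infty_t L^2_x}^2 \int_0^\infty \|u^\eps_{xx}(t)\|_{L^2(\Omega)}^2\,dt \leq C,
\end{equation*}
which is precisely \eqref{eq:Lipu}.

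The main difficulty lies not in this interpolation step but in the uniform control of $|\ln(\eps)|^{-1}\int_\Omega G^\eps(u_{in})\,dx$. Since $G^{\eps\prime\prime}$ is singular at $s=0$ (for instance, when $n=2$ one can compute $G^\eps(s)$ explicitly and check that it is of order $|\ln\eps|/\eps$ where $u_{in}$ is of order $\eps$), the quantity is not automatically bounded. It is here that the complete wetting set-up enters: the substrate is covered by a pre-wetted film whose natural thickness is of order $\eps$, and an appropriate assumption on (or regularization of) the initial datum ensures $\int_\Omega G^\eps(u_{in})\,dx = O(|\ln\eps|)$, exactly canceling the prefactor in the entropy identity and making the dissipation bound on $u^\eps_{xx}$ uniform in $\eps$.
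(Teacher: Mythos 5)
Your interpolation step is fine, but the argument hinges on a uniform bound $\int_0^\infty \|u^\eps_{xx}(t)\|_{L^2(\Omega)}^2\,dt \le C$ that is not available in this setting, and your proposed resolution of that difficulty does not work. First, the paper makes no pre-wetting assumption: ``complete wetting'' here means only that the equation is posed on all of $\Omega$ with zero contact angle, and the initial datum is a general $H^1$ droplet profile, typically compactly supported (see $\rho_{in}=\chi_{\{u_{in}>0\}}$ and Corollary \ref{cor:ODE for the integral}). Second, even granting a precursor film of height $\eps$, your own computation gives $G^\eps(\eps)\sim |\ln\eps|/\eps$, so $|\ln\eps|^{-1}\int_\Omega G^\eps(u_{in})\,dx \sim |\Omega|/\eps \to \infty$; the cancellation you invoke would require $u_{in}\ge c>0$ uniformly in $\eps$, which is not the droplet regime. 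Third, for $n\in[2,3)$ the entropy $G^\eps$ with $G^{\eps\prime\prime}(s)=1/(\eps^{3-n}s^n+s^3)$ satisfies $G^\eps(0)=+\infty$, and indeed $u^\eps_{xx}\in L^2$ is not even known for fixed $\eps$ in that range (Remark \ref{rem:reg}: only $(u^\eps)^{3/2}\in L^2H^2$). The only entropy-type bound that survives uniformly in $\eps$ is the weighted one \eqref{eq:entropydiss}, $\int_0^\infty\int_\Omega u^\eps|u^\eps_{xx}|^2\,dx\,dt\le C$, which degenerates exactly at the contact line where the Lipschitz bound is hardest.

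The paper's proof takes a genuinely different route that sidesteps the entropy entirely. Lemma \ref{lem:lipschitzbound} bounds $\|u^\eps_x(t)\|_{L^\infty}^2$ pointwise in time by $C(1+M_\eps(t))$, where $M_\eps(t)^2$ is the \emph{energy dissipation rate} $|\ln\eps|\int(\eps^{3-n}(u^\eps)^n+(u^\eps)^3)|u^\eps_{xxx}|^2\,dx$. The mechanism is a maximum-principle-type argument for the quantity $G=\tfrac12 (u^\eps_x)^2-u^\eps u^\eps_{xx}$, whose spatial derivative is $-u^\eps u^\eps_{xxx}$ on $\{u^\eps>0\}$; the factor $|\ln\eps|$ is absorbed by the elementary estimate $\int_0^{\|u^\eps\|_\infty}\frac{s\,ds}{\eps^{3-n}s^{n-1}+s^2}\le C+|\ln\eps|$, which requires no hypothesis on the initial data and works for all $n<3$. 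Raising to the fourth power and integrating in time, $\|u^\eps_x\|_{L^\infty}^4\lesssim 1+M_\eps^2$ together with the energy inequality \eqref{eq:energy} (which puts $M_\eps^2$ in $L^1(0,\infty)$ uniformly) gives \eqref{eq:Lipu}. If you want to salvage your strategy, you would need to replace the unweighted $H^2$ entropy bound by an argument compatible with the weighted bound \eqref{eq:entropydiss}, but the Gagliardo--Nirenberg step $\|u_x\|_{L^\infty}^2\le 2\|u_x\|_{L^2}\|u_{xx}\|_{L^2}$ then fails precisely where $u$ vanishes.
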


Next, we will prove the following result, which shows that the function $\rho^\eps$ defined by \eqref{eq:rhoeps} converges, up to a subsequence, to some function $\rho(t,x)$:
\begin{proposition}\label{prop:rho}
The followings hold uniformly with respect to $\eps$: 
$$ \rho^\eps \mbox{ is bounded in $L^\infty((0,T)\times\Omega)$}$$
and 
$$\rho^\eps _t \mbox{ is bounded in $L^1((0,T);W^{-1,1}(\Omega))$}.$$
In particular, $\{\rho^\eps\}_{\eps>0}$ is relatively compact in $L^p((0,T); W^{-1,1}(\Omega))$ for any $p\in[1,\infty)$ and for any  $T>0$.
\end{proposition}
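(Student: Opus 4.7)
The three assertions are independent and I would establish them in order.

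\textbf{The $L^\infty$ bound.} Since $B'(r)=\int_r^\infty(v^{n-1}+v^2)^{-1}\,dv>0$, the function $B^\eps$ is non-decreasing. The asymptotic $B(r)\sim\ln r$ as $r\to\infty$ gives the quantitative bound $B^\eps(s)\leq 1+C|\ln s|/|\ln\eps|$, which stays uniformly finite on any bounded interval for $\eps$ small. Coupled with a uniform $L^\infty$ bound on $u^\eps$ (which follows from the a priori estimates of Section \ref{sec:apriori}, essentially Proposition \ref{prop:lip} together with mass conservation in one space dimension), this immediately yields $\|\rho^\eps\|_{L^\infty((0,T)\times\Omega)}\leq C$.

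\textbf{The time derivative, the easy piece.} I would exploit the splitting $\pa_t\rho^\eps=T^\eps+\pa_x R^\eps$ from \eqref{eq:tfed}--\eqref{eq:RTe} and bound the two pieces separately. For the flux, rewriting $R^\eps=-[|\ln\eps|B^{\eps\prime}(u^\eps)\sqrt{m^\eps(u^\eps)}]\,[\sqrt{m^\eps(u^\eps)}u_{xxx}^\eps]$ (where $m^\eps(s)=\eps^{3-n}s^n+s^3$) and applying Cauchy--Schwarz, the substitution $r=u^\eps/\eps$ combined with the elementary estimate $B'(r)\leq\min(B'(0),1/r)$ gives
\begin{equation*}
\int|\ln\eps|^2(B^{\eps\prime}(u^\eps))^2 m^\eps(u^\eps)\,dx\leq C(\|u^\eps\|_{L^1(\Omega)}+\eps|\Omega|)
\end{equation*}
uniformly in $t$ and $\eps$, while the standard energy identity for \eqref{eq:tfe} yields $\int_0^T\!\int m^\eps(u^\eps)(u_{xxx}^\eps)^2\,dx\,dt\leq C/|\ln\eps|$. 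Combining these, $\|R^\eps\|_{L^1((0,T);L^1(\Omega))}\leq C/\sqrt{|\ln\eps|}\to 0$, so $\pa_x R^\eps$ is bounded (and in fact vanishes) in $L^1((0,T);W^{-1,1}(\Omega))$.

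\textbf{The main obstacle.} The difficult piece is $T^\eps=-u^\eps u_x^\eps u_{xxx}^\eps$: bounding it directly against the dissipation fails because the weight $u^\eps/\sqrt{m^\eps(u^\eps)}$ is not integrable uniformly in $\eps$. I would instead use the boundary condition $u_x^\eps|_{\pa\Omega}=0$ together with the algebraic identity
\begin{equation*}
T^\eps=\pa_x\!\Bigl[\tfrac{1}{3}(u_x^\eps)^3-u^\eps u_x^\eps u_{xx}^\eps\Bigr]+u^\eps(u_{xx}^\eps)^2,
\end{equation*}
verified by direct expansion, to reduce the $L^1(W^{-1,1})$ bound on $T^\eps$ to $L^1(L^1)$ bounds for $(u_x^\eps)^3$, $u^\eps u_x^\eps u_{xx}^\eps$, and $u^\eps(u_{xx}^\eps)^2$. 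The first is handled by Proposition \ref{prop:lip} via $L^{4/3}(L^\infty)\subset L^1(L^1)$. The essential input is $\iint u^\eps(u_{xx}^\eps)^2\,dx\,dt\leq C$, a Bernis--Friedman type entropy estimate obtained by testing \eqref{eq:tfe} against $G^{\eps\prime}(u^\eps)$ with $G^{\eps\prime\prime}(s)=s/m^\eps(s)=1/(s^2+\eps^{3-n}s^{n-1})$. The striking observation that $G^{\eps\prime\prime}(s)=-|\ln\eps|B^{\eps\prime\prime}(s)$ reveals that the scaling in the definition of $B^\eps$ has been engineered precisely so that $\int G^\eps(u_{in})\,dx=O(|\ln\eps|)$, which cancels the $|\ln\eps|^{-1}$ factor in front of the time derivative. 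The mixed term $u^\eps u_x^\eps u_{xx}^\eps$ is then controlled by Cauchy--Schwarz combined with these two bounds and mass conservation.

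\textbf{Compactness.} With both bounds in hand, the conclusion follows from the Aubin--Lions--Simon lemma: the chain $L^\infty(\Omega)\hookrightarrow L^2(\Omega)\hookrightarrow H^{-1}(\Omega)\hookrightarrow W^{-1,1}(\Omega)$ has its middle inclusion compact by Rellich's theorem (valid since $\Omega\subset\RR$ is a bounded interval), so $L^\infty(\Omega)\hookrightarrow W^{-1,1}(\Omega)$ is compact. Simon's lemma then delivers relative compactness of $\{\rho^\eps\}$ in $L^p((0,T);W^{-1,1}(\Omega))$ for every $p\in[1,\infty)$.
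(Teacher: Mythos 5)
Your proof is correct and follows essentially the same route as the paper: the same splitting $\pa_t\rho^\eps=T^\eps+\pa_x R^\eps$, the same Cauchy--Schwarz bound on $R^\eps$ against the energy dissipation, the same rewriting of $T^\eps$ as an exact derivative plus $u^\eps(u^\eps_{xx})^2$, and the same entropy estimate (your observation that $G^{\eps\prime\prime}=-|\ln\eps|B^{\eps\prime\prime}$ means your test-function computation is exactly the paper's integration of \eqref{eq:tfed} over $(0,T)\times\Omega$). The only small slip is that the uniform $L^\infty$ bound on $u^\eps$ should be deduced from the time-uniform $H^1$ bound of the energy inequality \eqref{eq:energy} together with mass conservation, rather than from Proposition \ref{prop:lip}, which controls $u^\eps_x$ only in $L^4$ in time and hence does not give a bound for every $t$.
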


We then study the properties of any accumulation point of $\{\rho^\eps\}_{\eps>0}$. The first result is the following:

\begin{theorem}\label{thm:rho}
Let $\rho(t,x)$ be an accumulation point of $\{\rho^\eps\}_{\eps>0}$ in $L^p(0,T; W^{-1,1}(\Omega))$. 
That is $\rho$ is such that
$$ 
\rho^{\eps_k} \to \rho \mbox{ strongly in $L^p(0,T; W^{-1,1}(\Omega))$}\qquad\mbox{for some subsequence $\eps_k\to0$.}
$$
Then the function $\rho(t,x)$ satisfies
\begin{equation}\label{eq:rho01}
 0\leq \rho(t,x)\leq 1\quad  \mbox{ a.e. in } (0,\infty)\times\Omega
 \end{equation}
and 
\begin{equation}\label{eq:positivity}
\pa_t \rho \geq 0  \quad \mbox{ in the sense of distribution.}
\end{equation}
\end{theorem}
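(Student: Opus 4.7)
For part (1), both bounds follow from the algebraic definition $\rho^\eps=B^\eps(u^\eps)$. The lower bound $\rho\geq 0$ is immediate since $B^\eps$ is nondecreasing with $B^\eps(0)=0$ and $u^\eps\geq 0$. For the upper bound I would invoke a uniform $L^\infty$-bound $\|u^\eps\|_\infty\leq M$ (from the a priori estimates of Section~\ref{sec:apriori}; in one space dimension this follows from mass conservation and the $H^1$ energy bound via the Sobolev embedding). Monotonicity of $B^\eps$ gives $\rho^\eps\leq B^\eps(M)=B(M/\eps)/|\ln\eps|$, and since $B'(s)\sim 1/s$ forces $B(s)=\ln s+O(1)$ as $s\to\infty$, one has $B^\eps(M)\to 1$. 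Weak-$*$ convergence in $L^\infty$ (available since $\rho^\eps$ is uniformly bounded and must coincide with the already-identified $L^p(W^{-1,1})$ limit) then yields $\rho\leq 1$ a.e.

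For part (2) the heart of the matter is an algebraic rewriting of the source term $T^\eps$ in \eqref{eq:RTe}. Using $\partial_x(uu_xu_{xx})=u_x^2 u_{xx}+u u_{xx}^2+u u_x u_{xxx}$ together with $(u_x)^2u_{xx}=\tfrac{1}{3}\partial_x((u_x)^3)$, a direct computation yields
\begin{equation*}
T^\eps \;=\; u^\eps(u^\eps_{xx})^2 \;+\; \partial_x A^\eps,\qquad A^\eps:=\tfrac{1}{3}(u^\eps_x)^3 - u^\eps u^\eps_x u^\eps_{xx}.
\end{equation*}
Inserting this into \eqref{eq:tfed} gives the crucial decomposition
\begin{equation*}
\partial_t \rho^\eps \;=\; u^\eps(u^\eps_{xx})^2 \;+\; \partial_x(A^\eps+R^\eps),
\end{equation*}
whose first summand is pointwise non-negative. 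Hence for every $\phi\in C_c^\infty((0,T)\times\Omega)$ with $\phi\geq 0$,
\begin{equation*}
\langle \partial_t\rho^\eps,\phi\rangle \;=\; \iint u^\eps(u^\eps_{xx})^2\phi\,dx\,dt \;-\; \iint (A^\eps+R^\eps)\phi_x\,dx\,dt.
\end{equation*}

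The proof is concluded by passing to the limit $\eps\to 0$ in this identity. I would first establish $R^\eps\to 0$ in $L^1$ by combining the dissipation bound $|\ln\eps|\iint M^\eps(u^\eps_{xxx})^2\leq C$ with the pointwise estimate $(B^\eps)'(u^\eps)M^\eps\lesssim (u^\eps)^2/|\ln\eps|$ valid when $u^\eps\gtrsim\eps$; this gives $\iint R^\eps\phi_x\to 0$. For the $A^\eps$ term, Proposition~\ref{prop:lip} together with Aubin--Lions type compactness should yield strong convergence of $u^\eps$ and $u^\eps_x$ in the spaces needed to pass to the limit in $\iint A^\eps\phi_x$. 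The main obstacle is then to show that the resulting distributional limit is non-negative: denoting by $\mu\geq 0$ the weak-$*$ limit of $u^\eps(u^\eps_{xx})^2$ as a non-negative measure and by $A$ the distributional limit of $A^\eps$, the statement reduces to $\mu+\partial_x A\geq 0$. The expected mechanism is a cancellation internal to the quasi-static limit: where $u_{xx}=-\lambda$ on $\{u>0\}$, the bulk contributions to $\mu$ and $\partial_x A$ both equal $\pm\lambda^2 u$ and cancel, so that only Dirac contributions on the contact line survive, and these assemble into non-negative masses---this is the analytic manifestation of Tanner's law with coefficient $1/3$ in our framework. Making this cancellation rigorous, in particular controlling the inner boundary layer near $\partial\{u>0\}$ so that no negative contribution leaks into the limit, is the central technical difficulty.
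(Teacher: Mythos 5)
Part (1) of your argument matches the paper's. For part (2), your algebraic decomposition $T^\eps = u^\eps(u^\eps_{xx})^2 + \pa_x A^\eps$ is exactly the one used in the paper, and your heuristic (bulk cancellation, surviving boundary masses $\tfrac13|v'|^3$) is the right picture. But there are two genuine gaps. First, the compactness you invoke does not exist: Aubin--Lions for $u^\eps$ or $u^\eps_x$ in space-time fails because $\pa_t u^\eps = -|\ln\eps|\,\pa_x((\eps^{3-n}(u^\eps)^n+(u^\eps)^3)u^\eps_{xxx})$ is \emph{not} uniformly bounded in any negative Sobolev space (the energy dissipation only controls the flux up to a factor $|\ln\eps|^{1/2}$), and indeed Remark~\ref{rem:uu} shows the limit cannot be weakly continuous in time (instantaneous droplet merging), so no standard space-time compactness argument for $u^\eps$ can work. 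Moreover $A^\eps$ contains $u^\eps u^\eps_x u^\eps_{xx}$, so strong convergence of $u^\eps_x$ alone would not suffice; one more integration by parts is needed to reach the representation \eqref{eq:vphiT} involving only $u^\eps(u^\eps_{xx})^2$, $(u^\eps_x)^3$ and $u^\eps(u^\eps_x)^2$. Second, and more importantly, you explicitly leave the positivity of $\mu+\pa_x A$ as "the central technical difficulty" --- but that \emph{is} the theorem; identifying the difficulty is not resolving it.

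The paper's resolution, which you are missing, is to argue pointwise in time rather than in space-time. Fix $t$ in a full-measure set where the dissipation $h^{\eps_k}(t)\to0$. The key is Lemma~\ref{lem:matias}: testing against $\phi^4$ (the fourth power is needed for the interpolation inequality \eqref{eq:v4}) one gets $\langle T^\eps(t),\phi^4\rangle \ge \tfrac14\int u^\eps(u^\eps_{xx})^2\phi^4\,dx - C$. This yields a dichotomy: either $\liminf_k\langle T^{\eps_k}(t),\phi^4\rangle=+\infty$ (trivially nonnegative), or along the minimizing subsequence $\int u^{\eps_k}(u^{\eps_k}_{xx})^2\phi^4$ is bounded, which via Lemma~\ref{lem:uv} (a fixed-time argument, not Aubin--Lions) gives strong $L^3$ convergence of $u^{\eps_k}_x\phi$; the term $\int u^{\eps_k}(u^{\eps_k}_{xx})^2\phi^4$ is then handled by weak lower semicontinuity, which gives an inequality in the favorable direction, and the fact that any uniform limit $v$ of $u^{\eps_k}(t)$ satisfies $v'''=0$ on $\{v>0\}$ lets one compute the limit explicitly as $\ge\tfrac13\int_{\pa\{v>0\}}|v'|^3\phi^4\,d\mathcal H^0\ge0$ --- this is how the cancellation is made rigorous. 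Finally, the same uniform lower bound $-C$ from Lemma~\ref{lem:matias} justifies Fatou's lemma in $t$, converting the pointwise statement into $\pa_t\rho\ge0$ in the sense of distributions.
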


The first inequality \eqref{eq:rho01}  follows straightforwardly from the definition of $\rho^\eps$ \eqref{eq:rhoeps} and a $L^\infty$ bound for $u^\eps$, which we derive in the next section.
The second inequality \eqref{eq:positivity} is considerably more delicate to obtain, and is really our first important contribution. 
If one thinks of $\rho$ as an approximation of the characteristic function of the support of the droplet, as suggested above, then \eqref{eq:positivity} implies that the support of the droplet is always expanding. 
This fact is expected, but as we will see, not  easy to establish.

\medskip

We  will not actually show that $\rho$ is a 
 characteristic function. This is unfortunately a  classical difficulty with capillary problems (\cite{GK10,mellet,otto98}). 
However, for any function $\rho(t,x)$ given by Theorem \ref{thm:rho}, 
the function $x\mapsto \rho(t,x)$ is well defined and belong to $L^\infty(\Omega)$ for almost every $t>0$.
For such $t$, we can then define the open set 
\begin{equation*} \Sigma (t) = \left\{x\in\Omega\,;\, \exists r>0\, \int_{B_r(x)\cap\Omega}[ 1-\rho(t,y)]\, dy =0 \right\}\end{equation*}
(this open set   is the complementary of the support of the positive measure $1-\rho$).
It satisfies in particular
\begin{equation*}
 \rho(t,x) = 1 \mbox{ a.e.  in } \Sigma(t)
\end{equation*}
(in fact we have $x_0\in \Sigma(t)$, if and only if, there exists $r>0$ such that $\rho(t,x)=1$ for almost every $x\in B_r(x_0)$)
and this definition makes it easy to show:
\begin{corollary}
The set $\Sigma(t)$ is non-decreasing:
$$ \Sigma(s) \subset \Sigma(t) \mbox{ whenever $0<s<t$.}$$
\end{corollary}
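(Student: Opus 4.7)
The plan is to deduce the corollary directly from the pointwise control provided by Theorem \ref{thm:rho}, namely $\partial_t\rho\geq 0$ and $\rho\leq 1$. The intuition is that $\Sigma(t)$ is the set where $\rho(t,\cdot)$ equals its upper bound $1$; monotonicity of $\rho$ in time then prevents this set from shrinking. The main point to argue carefully is how to extract a pointwise-in-space monotonicity statement from a purely distributional assumption on $\partial_t\rho$.

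First I would establish the following monotonicity lemma. For any non-negative test function $\varphi\in C_c^\infty(\Omega)$, set $F_\varphi(\tau):=\int_\Omega \rho(\tau,x)\varphi(x)\,dx$. Since $\rho\in L^\infty((0,T)\times\Omega)$ this is defined for a.e.\ $\tau$, and since $\partial_t\rho\geq 0$ distributionally, the distributional derivative $F_\varphi'$ is a non-negative measure on $(0,\infty)$; hence $F_\varphi$ admits a non-decreasing representative. Outside a $\varphi$-dependent null set of times, $F_\varphi$ equals this representative. By separability of $C_c^\infty(\Omega)$ one can choose a single full-measure set $E\subset(0,\infty)$ of times at which $x\mapsto\rho(\tau,x)$ is well defined and such that $F_\varphi(s)\leq F_\varphi(t)$ for every non-negative $\varphi$ whenever $s,t\in E$ with $s<t$.

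Next, fix $s<t$ in $E$ and take $x_0\in\Sigma(s)$. By definition there exists $r>0$ such that $\int_{B_r(x_0)\cap\Omega}[1-\rho(s,y)]\,dy=0$. Approximating the indicator $\chi_{B_r(x_0)\cap\Omega}$ from below by a non-decreasing sequence of non-negative $\varphi_j\in C_c^\infty(\Omega)$ and using the monotonicity lemma together with monotone convergence, we obtain
\begin{equation*}
\int_{B_r(x_0)\cap\Omega}\rho(t,y)\,dy\;\geq\;\int_{B_r(x_0)\cap\Omega}\rho(s,y)\,dy\;=\;|B_r(x_0)\cap\Omega|.
\end{equation*}
Combined with $\rho(t,y)\leq 1$ from \eqref{eq:rho01}, this forces $\rho(t,y)=1$ for a.e.\ $y\in B_r(x_0)\cap\Omega$, so $x_0\in\Sigma(t)$. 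Thus $\Sigma(s)\subset\Sigma(t)$.

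The statement of the corollary implicitly assumes $s,t$ belong to the full-measure set on which the slice $\rho(t,\cdot)$ and hence $\Sigma(t)$ are defined, so the restriction $s,t\in E$ is not a real loss. The only genuinely non-trivial step is the monotonicity lemma, whose main subtlety is choosing a single null set of bad times valid for all test functions simultaneously; this is handled by separability. Everything else is a direct consequence of the definition of $\Sigma(t)$ together with the bounds $0\leq\rho\leq 1$ and $\partial_t\rho\geq 0$ supplied by Theorem \ref{thm:rho}.
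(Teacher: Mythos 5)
Your proof is correct and follows exactly the route the paper intends (the paper leaves this corollary as an immediate consequence of Theorem \ref{thm:rho} and the definition of $\Sigma(t)$, without writing out the details): monotonicity of $t\mapsto\int\rho\,\varphi$ from $\pa_t\rho\geq0$, the bound $\rho\leq1$, and the definition of $\Sigma(t)$ via the vanishing of $1-\rho$ on a ball. Your care in fixing a single full-measure set of times valid for all test functions via separability is exactly the right way to make the ``easy to show'' claim rigorous.
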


Intuitively, this set describes the support of the drop, a fact that will be made precise in the next theorem, which characterizes the behavior of the drop profile $\{u^{\eps_k}\}_{k\in\NN}$:
\begin{theorem}\label{thm:u}
For all $t>0$, $\{u^\eps(t,\cdot)\}_{\eps>0}$ is bounded in $H^1(\Omega)$ and therefore relatively compact in $C^0(\Omega)$.
Furthermore, given a subsequence $\eps_k\to 0$ such that 
$\rho^{\eps_k}$ converges to  $\rho$  strongly in $L^p(0,T; W^{-1,1}(\Omega))$,
there exists a subsequence $\eps_k'$ of  $\eps_k$ such that for almost every $t>0$, 
every accumulation point $v(x)\in C^0(\Omega) \cap H^1(\Omega)$ of the sequence $\{u^{\eps_k'}(t)\}$ satisfies:
\begin{equation}\label{eq:vpara}
v'''(x)=0 \mbox{ in } \{v>0\}, \qquad \int_\Omega v(x)\, dx=1
\end{equation}
and 
\begin{equation}\label{eq:vsupp}
\{v>0\} \subset \Sigma(t), \qquad \pa\{v>0\} \cap \Sigma(t) = \emptyset.
\end{equation}
\end{theorem}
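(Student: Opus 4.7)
\textbf{Proof plan for Theorem \ref{thm:u}.} The plan has four ingredients: (i) a uniform $H^1$ bound from energy dissipation; (ii) a subsequence selection that yields $v'''=0$ in $\{v>0\}$ and the mass normalization; (iii) the inclusion $\{v>0\}\subset\Sigma(t)$; and (iv) the separation $\partial\{v>0\}\cap\Sigma(t)=\emptyset$. Step (iv) is, I expect, the main obstacle.

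For (i) and (ii), I would test \eqref{eq:tfe} against $-u^\eps_{xx}$ and use the boundary conditions \eqref{eq:bc} to get the energy identity
\begin{equation*}
\frac{d}{dt}\,\tfrac12\|u^\eps_x(t)\|_{L^2}^2 + |\ln\eps|\int_\Omega \bigl(\eps^{3-n}(u^\eps)^n+(u^\eps)^3\bigr)(u^\eps_{xxx})^2\,dx = 0,
\end{equation*}
so $\|u^\eps_x(t)\|_{L^2}\le\|u_{\mathrm{in},x}\|_{L^2}$ uniformly in $\eps$ and $t$. Combined with mass conservation this gives a uniform $H^1$ bound on $u^\eps(t,\cdot)$; the compact embedding $H^1\hookrightarrow C^0$ in one dimension yields the first claim. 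Integrating the same identity in $t$, $\int_0^T\!\int_\Omega (u^{\eps_k})^3(u^{\eps_k}_{xxx})^2\,dx\,dt\le E(u_{\mathrm{in}})/|\ln\eps_k|\to 0$, and a diagonal extraction selects $\eps_k'$ such that, for a.e.\ $t>0$, both $(u^{\eps_k'})^{3/2}(t)u^{\eps_k'}_{xxx}(t)\to 0$ in $L^2(\Omega)$ and $\rho^{\eps_k'}(t)\to\rho(t)$ in $W^{-1,1}(\Omega)$. Fixing such a $t$ and a $C^0$-accumulation point $v$ of $\{u^{\eps_k'}(t)\}$: on any compact $K\subset\{v>0\}$, uniform convergence gives $u^{\eps_k'}\ge\tfrac12\min_K v$ for large $k$, whence $\int_K(u^{\eps_k'}_{xxx})^2\to 0$ and $v'''=0$ in $\mathcal{D}'(\{v>0\})$; mass conservation plus $C^0$ convergence gives $\int v=1$.

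For (iii), if $v(x_0)>0$ then uniform convergence gives $u^{\eps_k'}\ge v(x_0)/2$ on some $B_r(x_0)$ for large $k$. Monotonicity of $B^\eps$ together with the limit $B^{\eps_k'}(v(x_0)/2)\to 1$ gives $\liminf_k\rho^{\eps_k'}\ge 1$ pointwise on $B_r(x_0)$. Using the uniform $L^\infty$ bound on $\rho^{\eps_k'}$ from Proposition \ref{prop:rho} to upgrade the $W^{-1,1}$-convergence to weak-$*$ convergence in $L^\infty$, testing against non-negative $\phi\in C_c(B_r(x_0))$ yields $\int\rho\,\phi\ge\int\phi$; the bound $\rho\le 1$ from Theorem \ref{thm:rho} then forces $\rho(t,\cdot)=1$ a.e.\ on $B_r(x_0)$, so $x_0\in\Sigma(t)$.

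For (iv), I would argue by contradiction: assume $x_0\in\partial\{v>0\}\cap\Sigma(t)$, so $v(x_0)=0$ and $\rho(t,\cdot)=1$ a.e.\ on some ball $B\subset\Sigma(t)$. The natural tool is concavity of $B^\eps$ combined with a tangent-line estimate at $s_0=\eps^{1-\delta}$ for small $\delta>0$; using $B'(\tau)\sim\tau^{-1}$ at infinity one computes $B^\eps(s_0)=\delta+O(|\ln\eps|^{-1})$ and $(B^\eps)'(s_0)\le C\eps^{\delta-1}/|\ln\eps|$, and the pointwise bound $B^\eps(s)\le B^\eps(s_0)+(B^\eps)'(s_0)s$ integrated over $B$ gives
\begin{equation*}
\int_B\rho^{\eps_k'}\,dx \le \bigl(\delta+o(1)\bigr)|B| + C\,\frac{\eps_k^{\delta-1}}{|\ln\eps_k|}\int_B u^{\eps_k'}\,dx.
\end{equation*}
Since the left side converges to $|B|$ and $\delta<1$, a contradiction would follow from any quantitative decay $\int_B u^{\eps_k'}=O(\eps_k^\alpha)$ with $\alpha>0$, on taking $B$ inside a one-sided neighborhood of $x_0$ on which $v\equiv 0$ (available when $x_0$ is an endpoint of a component of $\{v>0\}$). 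The hard part is establishing such a rate, since the $C^0$ convergence $u^{\eps_k'}\to v$ with $v\equiv 0$ on $B$ gives only $\int_B u^{\eps_k'}\to 0$ without any quantitative control in $\eps_k$; to close the argument I expect to need the spatial Lipschitz bound of Proposition \ref{prop:lip}, the time monotonicity $\partial_t\rho\ge 0$ from Theorem \ref{thm:rho} (to propagate smallness backwards to the initial data), and possibly a comparison argument exploiting the full thin-film structure in the precursor regime.
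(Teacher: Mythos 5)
Parts (i)--(iii) of your plan are correct and coincide with the paper's argument: the $H^1$ bound and $v'''=0$ are exactly Proposition \ref{prop:uacc} (via the dissipation term $h^{\eps}(t)=\int(\eps^{3-n}(u^\eps)^n+(u^\eps)^3)|u^\eps_{xxx}|^2dx\to0$ along a subsequence for a.e.\ $t$), and your proof of $\{v>0\}\subset\Sigma(t)$ via $\rho^{\eps_k'}\geq B^{\eps_k'}(\delta)\to1$ on $B_r(x_0)$ is the paper's first lemma of Section \ref{sec:thmu}.

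Step (iv) contains a genuine gap, and moreover the strategy cannot be repaired. Your tangent-line computation for the concave function $B^\eps$ is correct, but the decay $\int_B u^{\eps_k'}=O(\eps_k^\alpha)$ that you need is not merely hard to establish --- it is false precisely in the configuration you are trying to rule out. The function $B^\eps$ is designed so that $\rho^\eps=B^\eps(u^\eps)\to1$ whenever $u^\eps$ is larger than every power $\eps^a$, $a>0$ (e.g.\ $u^\eps\sim|\ln\eps|^{-1}$ gives $\rho^\eps\approx1$); so $\rho(t,\cdot)=1$ a.e.\ on $B$ is perfectly compatible with $u^{\eps_k'}\to0$ on $B$ at a logarithmic rate, and no power-law rate can be extracted. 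Worse, the configuration ``$\rho=1$ and $v\equiv0$ on a ball'' is \emph{admissible} in the limit: the theorem only asserts $\{v>0\}=\cup_{i\in J}(a_i,b_i)$ with $J\subset I$ possibly proper, i.e.\ whole components of $\Sigma(t)$ may carry no mass of $v$ (see the discussion following Theorem \ref{thm:u} and Remark \ref{rem:uu}). A local argument on a ball where $v\equiv0$ therefore cannot produce a contradiction; the obstruction is not that $v$ vanishes inside $\Sigma(t)$, but that $\pa\{v>0\}$ (where the contact angle $|v'|$ is nonzero, $v$ being a nondegenerate parabola on each component) meets $\Sigma(t)$.

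The paper's actual argument for (iv) is dynamical rather than local. For $\phi$ with $\supp\phi\subset\Sigma(s)$ and $t>s$, one has $\int_s^t\langle\pa_\tau\rho^{\eps_k},\phi^4\rangle d\tau\to\int\rho(t)\phi^4-\int\phi^4\leq0$ since $\rho$ is already at its maximum value $1$ on $\Sigma(s)$; since $\pa_xR^{\eps_k}\to0$ (Lemma \ref{lem:Reps00}) and $\liminf_k\langle T^{\eps_k}(\tau),\phi^4\rangle\geq0$ pointwise (Proposition \ref{prop:T2}), this forces $\int_s^t\langle T^{\eps_k}(\tau),\phi^4\rangle d\tau\to0$, and with the uniform lower bound of Lemma \ref{lem:matias} one extracts a further subsequence (diagonally over a countable dense set of times $s_l$ and test functions $\phi_{l,m}$) along which $\langle T^{\eps_k'}(t),\phi^4\rangle\to0$ for a.e.\ $t>s$. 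Proposition \ref{prop:1} then gives $0\geq\frac13\int_{\pa\{v>0\}}|v'|^3\phi^4d\mathcal H^0$, and since $|v'|\neq0$ on $\pa\{v>0\}$ this yields $\pa\{v>0\}\cap\Sigma(s_l)=\emptyset$, whence the claim via $\Sigma(t)=\bigcup_{s_l<t}\Sigma(s_l)$ for a.e.\ $t$ (monotonicity of $\Sigma$). You would need to import this whole circle of ideas --- the weak equation $\pa_t\rho^\eps=T^\eps+\pa_xR^\eps$, the lower semicontinuity of $T^{\eps}$ against $\phi^4$, and the monotonicity of $\rho$ --- to complete step (iv).
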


Equation \eqref{eq:vsupp} implies that if $\Sigma(t)=\cup_{i\in I} (a_i,b_i)$, then $\{v>0\} = \cup_{i\in J}
(a_i,b_i)$ with $J\subset I$.
However \eqref{eq:vpara}-\eqref{eq:vsupp} do not fully identify the function $v$. For this, we would need to know how the volume of the drop is distributed among the connected components of $\Sigma(t)$. A simple case is when the open set $\Sigma(t)$ has a unique connected component, in which case there is a unique $v$ satisfying \eqref{eq:vpara}-\eqref{eq:vsupp} and the whole subsequence $u^{\eps_k'}(t,\cdot)$ converges to this function $v(x)$:
\begin{corollary}\label{cor:singleinterval}
For almost every $t>0$, if $\Sigma(t) = (a,b)$ has only one connected component compactly contained in $\Omega$, then 
the sequence $\{u^{\eps_k'}(t,\cdot)\}$ defined in Theorem \ref{thm:u} has only one accumulation point $v\in H^1(\Omega)$ supported in $(a,b)$, and satisfying
\begin{equation}\label{eq:vab} 
v''(x)=-\lambda \mbox{ in } (a,b), \quad \int_{(a,b)} v(x)\, dx =1.
\end{equation}
In particular, the sequence $u^{\eps_k'}(t,\cdot)$ converges uniformly and in $H^1$ weak to $v(x)$ for almost every $t>0$.
\end{corollary}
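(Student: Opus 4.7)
The proof builds directly on Theorem~\ref{thm:u}. Fix a time $t$ for which the conclusions of Theorem~\ref{thm:u} hold and for which $\Sigma(t)=(a,b)\Subset \Omega$ is a single connected component. By that theorem, $\{u^{\eps_k'}(t,\cdot)\}$ is bounded in $H^1(\Omega)$ and therefore relatively compact in $C^0(\Omega)$; the strategy is to show that every $C^0$-accumulation point coincides with the same explicit quadratic profile, so the full sequence must converge to it.

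Let $v\in C^0(\Omega)\cap H^1(\Omega)$ be any such accumulation point. By Theorem~\ref{thm:u}, $v\geq 0$, $v'''=0$ on the open set $\{v>0\}$, $\int_\Omega v\,dx=1$, and $\{v>0\}\subset (a,b)$ with $\partial\{v>0\}\cap(a,b)=\emptyset$. I first claim $\{v>0\}=(a,b)$. Decomposing $\{v>0\}=\bigcup_i (\alpha_i,\beta_i)$ into connected components, any endpoint $\alpha_i$ or $\beta_i$ lying in the open interval $(a,b)$ would be a point of $\partial\{v>0\}\cap \Sigma(t)$, contradicting \eqref{eq:vsupp}. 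Hence every endpoint belongs to $\{a,b\}\cup(\Omega\setminus [a,b])$, which, together with the inclusion $\{v>0\}\subset (a,b)$, forces $\{v>0\}=(a,b)$; the empty case is ruled out by the mass normalization $\int_\Omega v\,dx=1$.

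Because $(a,b)\Subset \Omega$ and $v$ is continuous on $\Omega$ with $v\equiv 0$ outside $(a,b)$, we obtain the boundary data $v(a)=v(b)=0$. The ODE $v'''=0$ on $(a,b)$ then reduces to $v''=-\lambda$ for some constant $\lambda$, and solving with zero Dirichlet values gives $v(x)=\tfrac{\lambda}{2}(x-a)(b-x)$. The mass constraint $\int_a^b v\,dx=1$ fixes $\lambda=12/(b-a)^3$, so $v$ is uniquely determined and, in particular, independent of the chosen accumulation point.

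Consequently the relatively compact sequence $\{u^{\eps_k'}(t,\cdot)\}$ in $C^0(\Omega)$ admits a single accumulation point $v$, so the entire sequence converges uniformly to $v$. The uniform $H^1(\Omega)$ bound then yields weak relative compactness in $H^1$, and any weak-$H^1$ limit of a subsequence must coincide with $v$ via the compact embedding $H^1(\Omega)\hookrightarrow C^0(\Omega)$; hence the full sequence converges weakly in $H^1$ as well. The only delicate step is the identification $\{v>0\}=(a,b)$: it is precisely here that the structural condition $\partial\{v>0\}\cap \Sigma(t)=\emptyset$ from Theorem~\ref{thm:u} combines with the single-component hypothesis to exclude any nontrivial internal gap, a configuration that would otherwise allow the mass to distribute nonuniquely among sub-intervals and destroy uniqueness of the limit.
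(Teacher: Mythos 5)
Your proof is correct and follows the same route the paper intends: the corollary is a direct consequence of Theorem~\ref{thm:u} (the condition $\pa\{v>0\}\cap\Sigma(t)=\emptyset$ forces $\{v>0\}=(a,b)$, after which $v'''=0$, the zero boundary values at $a,b\in\Omega$, and the mass constraint pin down the unique parabola of Remark~\ref{rem:oneintervaluniquelimit}). The concluding compactness argument (unique accumulation point plus relative compactness in $C^0$ and boundedness in $H^1$ implies convergence of the full sequence) is also the standard step the paper relies on.
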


\begin{remark}\label{rem:oneintervaluniquelimit}
When $\Sigma(t) = (a,b)$, 
the function $v(x)$ is the unique minimizer of the following variational problem with volume constraint:
$$
\min_{w\in H^1_0(\Sigma(t)), \int w(x)\,dx=1}
\int_{\Sigma(t)} |w'(x)|^2\,dx
$$
and  it is explicitly given by the formula
$$
v(x)=6\frac{(b-x)_+(x-a)_+}{(b-a)^3}.
$$
\end{remark}
\begin{remark}\label{rem:uu}
Theorem~\ref{thm:u} formally shows that we cannot expect weak continuity in time for the accumulation points of $\{u^{\eps_k}\}$, which rules out any standard compactness argument and is one of the main mathematical obstacles to obtain the limit equation for $\pa_t \rho$.
 Indeed, the merging of droplets in \eqref{eq:tfe} takes place at a faster time scale than the spreading of the support. Our $\eps\to 0$ limit studies the latter time scale $1/|\ln\eps|$, see Remark~\ref{rem:1}. To illustrate this point explicitly, we formally examine the situation of two droplets merging at a time $t_0>0$.

If we assume that
$$
\begin{cases}
\Sigma(t)=(a_1(t),b_1(t))\cup(a_2(t),b_2(t))&\mbox{for $t<t_0$},\\
\lim_{t\to t_0^-}b_1(t)=\lim_{t\to t_0^-}a_2(t)&\mbox{for $t=t_0$},\\
\Sigma(t)=(a_1(t),b_2(t))&\mbox{for $t>t_0$},
\end{cases}
$$
then \eqref{eq:vpara} and \eqref{eq:vsupp} imply the following: For almost every $t<t_0$ there exists two positive functions $\gamma_1$ and $\gamma_2$, such that $\gamma_1(t)+\gamma_2(t)=1$ and
$$
v(t)=6\gamma_1(t)\frac{(b_1(t)-x)_+(x-a_1(t))_+}{(b_1(t)-a_1(t))^3}   +6\gamma_2(t)\frac{(b_2(t)-x)_+(x-a_2(t))_+}{(b_2(t)-a_2(t))^3}.
$$
Furthermore, by Remark~\ref{rem:oneintervaluniquelimit}, we have that for almost every $t\ge t_0$
$$
v(t)=6\frac{(b_2(t)-x)_+(x-a_1(t))_+}{(b_2(t)-a_1(t))^3}.
$$
Taking a test function $\phi$ depending on $\gamma_1$ and $\gamma_2$, and supported around the point $\lim_{t\to t_0^-}b_1(t)=\lim_{t\to t_0^-}a_2(t)$, we can obtain the discontinuity in time for the local average of the limit
$$
\lim_{t\to t_0^-} \int_\Omega v(t,x) \phi(x)\;dx\ne\lim_{t\to t_0^+} \int_\Omega v(t,x) \phi(x)\;dx.
$$
\end{remark}

Finally, our last result relates the evolution of the size of the support of the drop (which should be $|\Sigma(t)|$, but is represented in the theorem below by $\int \rho(t,x)\, dx \geq |\Sigma(t)|$)
and the profile of the drop:
\begin{theorem}\label{thm:sup}
Given a subsequence $\eps_k\to 0$ such that $\rho^{\eps_k}$ converges to  $\rho$  strongly in $L^p(0,T; W^{-1,1}(\Omega))$,
there exists a function 
$$w \in L^{\infty}(0,\infty; H^1(\Omega))\cap L^4(0,\infty; W^{1,\infty}(\Omega))$$ 
such that
 for a.a. $t\geq 0$
  $w(t,\cdot)$ is an accumulation point of the sequence $\{u^{\eps_k}(t,\cdot)\}_{k\in \NN}$ in $H^1(\Omega)$ satisfying \eqref{eq:vpara}, \eqref{eq:vsupp} and
\begin{equation}\label{eq:tannerglobal}
\displaystyle\frac{d}{dt} \int_\Omega \rho(t,x)  \, dx \geq
 \displaystyle\frac 1 3  \int_{\pa\{w>0\}} |w_x(t,x)| ^3  d \mathcal H^0(x)\qquad   \mbox{ in }  \mathcal{M}_+(0,T).
\end{equation}
\end{theorem}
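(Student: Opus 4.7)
The starting point is to integrate the distributional identity \eqref{eq:tfed} in space. Because $u^\eps_x=0$ and $(\eps^{3-n}u^n+u^3)u^\eps_{xxx}=0$ on $\pa\Omega$, the flux term $R^\eps$ vanishes on $\pa\Omega$, so
$$
\frac{d}{dt}\int_\Omega \rho^\eps\,dx \;=\; -\int_\Omega u^\eps\, u^\eps_x\, u^\eps_{xxx}\,dx.
$$
Two integrations by parts, using $u^\eps_x|_{\pa\Omega}=0$ to kill all boundary terms and the elementary identity $(u^\eps_x)^2 u^\eps_{xx}=\tfrac13\bigl((u^\eps_x)^3\bigr)_x$, turn this into the key energy identity
$$
\frac{d}{dt}\int_\Omega \rho^\eps\,dx \;=\; \int_\Omega u^\eps\,(u^\eps_{xx})^2\,dx.
$$
Integrating in time yields the uniform estimate $\|\sqrt{u^\eps}\,u^\eps_{xx}\|_{L^2((0,T)\times\Omega)}^2 \leq C$.

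I then extract from $\{\eps_k\}$ a further subsequence (still denoted $\eps_k$) such that $\sqrt{u^{\eps_k}}\,u^{\eps_k}_{xx}\rightharpoonup \xi$ weakly in $L^2((0,T)\times\Omega)$ and $u^{\eps_k}\rightharpoonup^{\,*} w$ in $L^\infty(0,\infty;H^1(\Omega))\cap L^4(0,\infty;W^{1,\infty}(\Omega))$; the second regularity is provided by Proposition~\ref{prop:lip}. A measurable-selection/diagonal argument combined with Theorem~\ref{thm:u} applied slicewise in $t$ produces, for a.e. $t$, a sub-subsequence along which $u^{\eps_k}(t,\cdot)$ converges uniformly and weakly in $H^1$ to $w(t,\cdot)$, so that $w(t,\cdot)$ satisfies \eqref{eq:vpara}--\eqref{eq:vsupp}. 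To identify $\xi$ on $\{w>0\}$, I observe that on any compact $K\Subset\{w(t)>0\}$ the uniform convergence $\sqrt{u^{\eps_k}}\to\sqrt{w}$ together with a lower bound $\sqrt w \geq c_K>0$ gives $u^{\eps_k}_{xx}\rightharpoonup \xi/\sqrt w$ in $L^2(K)$; since also $u^{\eps_k}_{xx}\to w_{xx}$ in $\D'(\{w>0\})$, necessarily $\xi=\sqrt w\,w_{xx}$ on $\{w>0\}$, and in particular $w_{xx}\in L^2_{\mathrm{loc}}(\{w>0\})$, in agreement with the explicit quadratic profile of $w$ on each connected component.

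To conclude, I test the space-integrated identity against an arbitrary $\phi\in C^\infty_c(0,T)$, $\phi\geq 0$: the left-hand side $\int_0^T \phi(t)\tfrac{d}{dt}\int_\Omega \rho^{\eps_k}\,dx\,dt$ converges to $\int_0^T \phi\,\tfrac{d}{dt}\int_\Omega\rho\,dx\,dt$ by the weak-$*$ convergence of $\rho^{\eps_k}$ in $L^\infty$ (from Proposition~\ref{prop:rho}), while weak $L^2$ lower semicontinuity gives
$$
\liminf_k \int_0^T\!\!\int_\Omega \phi(t)\,\bigl|\sqrt{u^{\eps_k}}\,u^{\eps_k}_{xx}\bigr|^2\,dx\,dt \;\geq\; \int_0^T\!\!\int_{\{w(t)>0\}}\phi(t)\, w\, w_{xx}^2\,dx\,dt.
$$
Because on each connected component $(a,b)$ of $\{w(t)>0\}$ the function $w(t,\cdot)$ is the parabola $\mu(x-a)(b-x)/2$ with $|w_x(a^+)|=|w_x(b^-)|=\mu(b-a)/2$, a direct computation gives the pointwise-in-$t$ identity $\int_\Omega w\, w_{xx}^2\,dx = \tfrac13 \int_{\pa\{w>0\}}|w_x|^3\,d\mathcal H^0$, which yields \eqref{eq:tannerglobal}.

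The main technical obstacle is the measurable selection of $w$. Remark~\ref{rem:uu} explicitly rules out any Aubin--Lions-type compactness in time, so $w$ cannot be identified as a strong-in-time limit of $u^{\eps_k}$; instead, $w(t,\cdot)$ must be chosen pointwise as an $H^1$ accumulation point that is both measurable in $t$ and compatible with the weak $L^2$ limit $\xi=\sqrt w\, w_{xx}$ on $\{w>0\}$. Carrying out this diagonal extraction consistently with the countable family of test functions needed to read off \eqref{eq:tannerglobal}, and simultaneously producing a single $w$ with the claimed global regularity, is the delicate step of the argument.
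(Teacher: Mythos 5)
There is a genuine gap, and it sits exactly at the step you yourself flag as "delicate": the construction of $w$. You define $w$ as the weak-$*$ limit of $u^{\eps_k}$ in $L^{\infty}(0,\infty;H^1(\Omega))$ and then assert that a measurable-selection/diagonal argument shows that, for a.e.\ $t$, the slice $w(t,\cdot)$ is an $H^1$/uniform accumulation point of $\{u^{\eps_k}(t,\cdot)\}$. This does not follow and is false in general: weak-$*$ convergence in $L^\infty(0,T;H^1)$ only controls time averages, and precisely because of the lack of compactness in time (Remark~\ref{rem:uu}) the slice of the weak-$*$ limit can be a convex combination of several distinct accumulation points of the fixed-time sequence. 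For instance, if $u^{\eps_k}(t,\cdot)$ oscillates between two profiles $f$ and $g$ on equidistributed sets of times, the weak-$*$ limit is $\tfrac12(f+g)$ for a.e.\ $t$, which is not an accumulation point of any slice and in particular is not a parabola satisfying \eqref{eq:vpara}. The same obstruction breaks your identification $\xi=\sqrt{w}\,w_{xx}$ on $\{w>0\}$: that identification needs $u^{\eps_k}\to w$ locally uniformly on the \emph{space-time} set $\{w>0\}$ along the single global subsequence carrying the weak $L^2((0,T)\times\Omega)$ limit $\xi$, and this strong-in-time convergence is exactly what is unavailable. Without it, the lower-semicontinuity inequality $\liminf_k\int\!\!\int\phi\,u^{\eps_k}(u^{\eps_k}_{xx})^2\geq\int\!\!\int_{\{w>0\}}\phi\,w\,w_{xx}^2$ has no justification for your $w$.

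The paper resolves this differently, and the order of operations matters: $w(t,\cdot)$ is \emph{not} a limit of the whole sequence. For each fixed $t\in\P$ one first extracts a $t$-dependent sub-subsequence achieving $\liminf_k\langle T^{\eps_k}(t),\phi^4\rangle$, then a further one converging uniformly to some accumulation point $v$, applies weak lower semicontinuity in $x$ only at that fixed $t$ (see \eqref{eq:gg} and Propositions~\ref{prop:1}--\ref{prop:2}), and \emph{defines} $w(t,\cdot):=v$; the passage to the time integral is then done by Fatou's lemma, whose applicability rests on the uniform lower bound of Lemma~\ref{lem:matias} (for $\phi\equiv1$ this is just nonnegativity of $\int_\Omega u^\eps(u^\eps_{xx})^2\,dx$, so your space-integrated identity does simplify this part). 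The remaining ingredients of your plan are sound and agree with the paper: the vanishing of the flux $R^\eps$ under the boundary conditions, the identity $\frac{d}{dt}\int\rho^\eps=\int u^\eps(u^\eps_{xx})^2$, and the pointwise-in-$t$ computation $\int_\Omega w\,w_{xx}^2\,dx=\tfrac13\int_{\pa\{w>0\}}|w_x|^3\,d\mathcal H^0$ for the parabolic profiles. But as written, the argument does not produce a $w$ with the properties the theorem claims, so the proof is incomplete at its central step.
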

This theorem, and in particular the inequality \eqref{eq:tannerglobal}, relates the evolution of the support $\Sigma(t)$ with Tanner's law \eqref{eq:tannerlaw}, though we only have an inequality (the support spread at least as fast as predicted by Tanner's law) and it is global in space. 
Note that \eqref{eq:tannerglobal} should be used together with an initial condition.
In general, we only have the following inequality (which follows from the monotonicity in time):
$$
\lim_{t\to0^+}\rho(x,t)\ge \rho_{in}=\lim_{\eps\to 0}B^\eps(u_{in})=\chi_{\{u_{in}>0\}}.
$$
This result should be compared with the result found in \cite{GO} (see Remark~\ref{rem:1}). Noticing that
$$
\int_{\Omega}\rho^\eps(t,x)\;dx\le|\{u^\eps>\eps\}|+C\frac{|\Omega|}{|\ln\eps|}
$$
we can use our theorem to recover the lower bound of \eqref{eq:GO} in the limit $\eps\to0$, with an explicit value for the constant.
\begin{corollary}\label{cor:ODE for the integral}
Under the assumptions and notations of Theorem~\ref{thm:sup},  we have the inequality
$$
\lim_{\eps\to 0} |\{u^\eps(x,t)\ge\eps\}| \geq \int_\Omega \rho(t,x)  \, dx\ge \min\{(63 \, t+|\{u_{in}>0\}|^7)^{1/7},|\Omega|\}\quad\mbox{for all $t\in[0, T]$.}$$

Moreover, if we have
\begin{equation*}
\pa \Sigma(t_0)\subset \Omega,    
\end{equation*}
then we have the stronger inequality
$$
\int_\Omega \rho(t,x)  \, dx\ge (1008\, t+|\{u_{in}>0\}|^7)^{1/7}\qquad\mbox{ for all $t\in[0, t_0]$}.
$$
\end{corollary}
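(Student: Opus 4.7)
The proof reduces \eqref{eq:tannerglobal} to a scalar ODE for $L(t):=\int_\Omega\rho(t,x)\,dx$. The initial value bound $\liminf_{t\to 0^+}L(t)\ge|\{u_{in}>0\}|$ is the inequality already recorded in the paragraph preceding the corollary, coming from the monotonicity in Theorem~\ref{thm:rho} together with the limit $\rho^\eps(0,\cdot)\to\chi_{\{u_{in}>0\}}$. The first inequality $\lim_{\eps_k\to0}|\{u^{\eps_k}(t,\cdot)\ge \eps_k\}|\ge L(t)$ follows from the pointwise bound $\int_\Omega\rho^\eps(t,x)\,dx\le|\{u^\eps(t,\cdot)>\eps\}|+C|\Omega|/|\ln\eps|$ recalled there, combined with $L^\infty$-weak-$\ast$ convergence of $\rho^{\eps_k}$ to $\rho$ (equivalently, $\rho^{\eps_k}(t,\cdot)\to\rho(t,\cdot)$ in $W^{-1,1}(\Omega)$ tested against the constant function $1$ for almost every $t$).

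To evaluate the right-hand side of \eqref{eq:tannerglobal} I use that $w'''=0$ on $\{w>0\}$ forces $w(t,\cdot)$ to be parabolic on each connected component. Writing $\{w(t,\cdot)>0\}=\bigcup_i(a_i,b_i)$, setting $\ell_i=b_i-a_i$ and $\gamma_i=\int_{a_i}^{b_i}w\,dx$, we have $\sum_i\gamma_i=1$ and $\sum_i\ell_i\le L(t)$. At any interior endpoint $w$ vanishes, while by the boundary condition \eqref{eq:bc} any endpoint lying on $\partial\Omega$ satisfies $w_x=0$. Integrating $w''=-\lambda_i$ with these conditions and the mass constraint yields
\[
|w_x|=\frac{6\gamma_i}{\ell_i^2}\ \text{ (two-sided component)},\qquad |w_x|=\frac{3\gamma_i}{\ell_i^2}\ \text{ (one-sided, wall-touching component)}.
\]
Each component therefore contributes either $144\gamma_i^3/\ell_i^6$ or $9\gamma_i^3/\ell_i^6$ to the integrand of \eqref{eq:tannerglobal}.

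The key algebraic step is the H\"older estimate
\[
1=\sum_i \gamma_i = \sum_i \frac{\gamma_i}{\ell_i^2}\,\ell_i^2\ \le\ \left(\sum_i\frac{\gamma_i^3}{\ell_i^6}\right)^{1/3}\left(\sum_i\ell_i^3\right)^{2/3},
\]
combined with the superadditivity $\sum_i\ell_i^3\le(\sum_i\ell_i)^3\le L(t)^3$. Together they yield $\sum_i\gamma_i^3/\ell_i^6\ge 1/L(t)^6$. In the regime $\partial\Sigma(t_0)\subset\Omega$, the monotonicity of $\Sigma$ propagates the inclusion to all $t\in[0,t_0]$, so every component is two-sided and $L'(t)\ge 144/L(t)^6$; integrating $(L^7/7)'\ge 144$ gives $L(t)^7\ge 1008\,t+|\{u_{in}>0\}|^7$. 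In general, as long as $\Sigma(t)\ne\Omega$ every component has at least one interior endpoint (a component with none would have to equal the whole interval $\Omega$), so $L'(t)\ge 9/L(t)^6$ and the same integration produces $L(t)^7\ge 63\,t+|\{u_{in}>0\}|^7$. At any time when $\Sigma(t)=\Omega$ one has $L(t)=|\Omega|$, which is then preserved by monotonicity; this is what the minimum with $|\Omega|$ encodes in the general statement.

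The main technical obstacle is the careful slope computation for wall-touching components: the Neumann condition $w_x=0$ on $\partial\Omega$ (from \eqref{eq:bc}) replaces a Dirichlet condition by a Neumann one, halving the slope at the interior endpoint compared to the symmetric parabola, while such a component has only one interior endpoint instead of two. Together these effects produce the factor $16=2^4$ separating the two regimes $144$ and $9$, which in turn accounts for the contrast between the constants $1008$ and $63$ in the final statement.
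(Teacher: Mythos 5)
Your proof is correct and follows essentially the same route as the paper: decompose $\Sigma(t)$ into intervals, use the explicit parabolic profiles of Theorem~\ref{thm:u} (slope $6\gamma_i/\ell_i^2$ at an interior zero, $3\gamma_i/\ell_i^2$ at the interior zero of a wall-touching Neumann component) to bound the right-hand side of \eqref{eq:tannerglobal} below by $144\sum_i\gamma_i^3/\ell_i^6$ (resp.\ $9\sum_i\gamma_i^3/\ell_i^6$), reduce to the differential inequality $L'\ge c\,L^{-6}$ for $L(t)=\int_\Omega\rho\,dx$, and integrate. The only genuine difference is your H\"older step $1=\sum_i(\gamma_i/\ell_i^2)\,\ell_i^2\le\bigl(\sum_i\gamma_i^3/\ell_i^6\bigr)^{1/3}\bigl(\sum_i\ell_i^3\bigr)^{2/3}$: the paper instead passes through the intermediate bound $144\sum_i\gamma_i\,\ell_i^{-6}$ (first power of $\gamma_i$), which does not literally follow from the exact value $144\sum_i\gamma_i^3\ell_i^{-6}$ when several components are present, so your interpolation is in fact the cleaner and fully correct way to reach $c\,L(t)^{-6}$.
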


\medskip

\paragraph{Formal derivation of Tanner's law.}
We end this introduction by outlining a formal derivation of Tanner's law and by pointing to the precise mathematical difficulties that currently prevents its full rigorous derivation. Formally, if the convergence is strong enough, we expect
$$
\lim_{\eps\to0} u^\eps= w\qquad\mbox{and}\qquad \lim_{\eps\to0}\rho^\eps=\rho=\chi_{\{w>0\}}.
$$ 
Moreover, by the energy dissipation inequality, we obtain
$$
w_{xxx}=0\qquad\mbox{on $\{w>0\}$,}
$$  
see \eqref{eq:vpara} and \eqref{eq:energy}. Next, passing to the limit distributionally in the time derivative equation for $\rho^\eps$ \eqref{eq:tfed} yields
$$ 
\pa_t \rho = \lim_{\eps\to 0} \left[ \pa_x R^\eps+T^\eps\right]
$$
with $R^\eps$ and $T^\eps$ given by \eqref{eq:RTe}.
We will show that $R^\eps$ converges to zero using the dissipation of energy, see Lemma~\ref{lem:Reps00}. Therefore, the motion of the contact line is characterized by the distribution 
$$ T :=  \lim_{\eps\to 0} T^\eps.$$
Integrating by parts, we notice that
\begin{align*}
\langle T^\eps(t) , \vphi\rangle_{\mathcal D',\mathcal D}  
& = - \int_\Omega u^\eps u^\eps_x u^\eps_{xxx} \vphi\, dx \\
& =  \int_\Omega u^\eps (u^\eps_{xx})^2 \vphi \, dx - \frac 5 {6}\int_\Omega {u_x^\eps}^3 \vphi' \, dx- \frac 1 2 \int_\Omega u^\eps {u^\eps_x}^2 \vphi''\, dx.
\end{align*}
Formally, passing to the limit in the previous equation, we get
$$
\langle T (t), \vphi\rangle_{\mathcal D',\mathcal D}   =  \int_{\{w>0\}} w (w_{xx})^2 \vphi \, dx - \frac 5 {6}\int_{\{w>0\}} {w_x}^3 \vphi' \, dx- \frac 1 2 \int_{\{w>0\}} w {w_x}^2 \vphi''\, dx
$$
and using that $w_{xxx}=0$ in $\{w>0\}$ and performing a couple of integration by parts, we obtain
\begin{equation}\label{eq:tannerdd}
\langle T(t) , \vphi\rangle_{\mathcal D',\mathcal D}   =  \frac 1 3  \int_{\pa\{w(t)>0\}} |w_x(t,x)|^3 \vphi(x)\, d\mathcal H^0(x) .
\end{equation}
The equation $\pa_t \rho = T$ is then a weak formulation of Tanner's  velocity law
$$ V = \frac1 3  |w_x|^3 .$$

\noindent There are two main difficulties to make this argument rigorous:

One is the lack of compactness in time for the function $u^\eps(t,x)$, as explained in Remark \ref{rem:uu}. This makes it necessary to work with limits for fixed times $t>0$, and  we need to be extremely careful with the type of a priori estimates we can use uniformly in time. 
The second reason, why we are only able to get an inequality in Theorem \ref{thm:sup} instead of the equality \eqref{eq:tannerdd}, is that in general we can only show that 
$$
\liminf_{\eps\to 0}\int_\Omega u^\eps (u^\eps_{xx})^2 \, dx \geq 
 \int_{\{w>0\}} w (w_{xx})^2 \, dx.
$$
To end this paper, we make this fact precise by proving a conditional result.
We consider the case of a single droplet spreading on a flat surface: We make the strong assumption that $\Sigma(t)$ has a single connected component for almost every $t>0$. Then Corollary \ref{cor:singleinterval} implies that 
the sequence  defined in Theorem \ref{thm:u}, which we denote $\{u^{\eps_k}(t,\cdot)\}$ for simplicity, converges to its unique accumulation point $ w(t,\cdot)\in H^1(\Omega)$, supported in $\Sigma(t)$ and solution of \eqref{eq:vab}.
We then have:
\begin{theorem}\label{thm:cond1}
In the simple framework described above, if we assume furthermore that
\begin{equation}\label{eq:cond0}
\lim_{k\to \infty}\int_0^T\int_\Omega u^{\eps_k} (u^{\eps_k}_{xx})^2\,   dx =
 \int_0^T\int_{\{w>0\}} w (w_{xx})^2\, dx,
\end{equation}
then the function $\rho(t,x) = \lim \rho^{\eps_k}$ satisfies
$ \rho (t,\cdot ) = \chi_{\Sigma(t)}$ and solves (in the sense of distribution):
\begin{equation}\label{eq:besteq}
\begin{cases}
\pa_t \rho = \frac 1 3  |w_x|^3  d \mathcal H^0|_{\pa\{w>0\}} \quad & \mbox{ in } \Omega\times (0,T) \\[3pt]
\rho(t=0) = \chi_{\{u_{in}>0\}} & \mbox{ in } \Omega.
\end{cases}
\end{equation}
\end{theorem}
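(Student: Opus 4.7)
The starting point is the distributional identity $\pa_t\rho^{\eps_k}=T^{\eps_k}+\pa_x R^{\eps_k}$ from \eqref{eq:tfed}, together with the integration by parts formula
\[
\langle T^{\eps_k}(t),\vphi\rangle = \int_\Omega u^{\eps_k}(u^{\eps_k}_{xx})^2\vphi\,dx - \tfrac{5}{6}\int_\Omega (u^{\eps_k}_x)^3\vphi'\,dx - \tfrac{1}{2}\int_\Omega u^{\eps_k}(u^{\eps_k}_x)^2\vphi''\,dx
\]
derived in the formal discussion. The remainder $R^{\eps_k}$ vanishes in the limit (Lemma~\ref{lem:Reps00}), so the task reduces to passing to the limit in the three bulk integrals against a test function $\vphi\in C^\infty_c(\Omega\times(0,T))$, and then to identifying $\rho$ as a characteristic function.

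For the leading term, hypothesis \eqref{eq:cond0} upgrades the weak $L^2((0,T)\times\Omega)$ convergence of $\sqrt{u^{\eps_k}}\,u^{\eps_k}_{xx}$ to strong convergence toward $\sqrt{w}\,w_{xx}$; combined with the uniform spatial convergence $u^{\eps_k}(t)\to w(t)$ from Corollary~\ref{cor:singleinterval} this yields $\int\!\!\int u^{\eps_k}(u^{\eps_k}_{xx})^2\vphi\to \int\!\!\int w(w_{xx})^2\vphi$. The lower-order terms are handled by a cut-off/exhaustion procedure using Proposition~\ref{prop:lip}: on any compact $K\Subset\{w(t)>0\}$ one has a uniform lower bound on $u^{\eps_k}(t)$, hence strong $L^2$ convergence $u^{\eps_k}_{xx}(t)\to w_{xx}(t)$ on $K$, and by 1D Sobolev embedding $u^{\eps_k}_x(t)\to w_x(t)$ in $L^\infty(K)$; outside $\{w(t)>0\}$, the uniform vanishing of $u^{\eps_k}(t)$ together with $\|u^{\eps_k}_x\|_{L^4(L^\infty)}\leq C$ makes the complementary contribution arbitrarily small. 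Once the three terms pass to the limit with $w$ in place of $u^{\eps_k}$, the identity $w_{xxx}=0$ in $\{w>0\}$ and the zero contact angle condition on $\pa\Omega$ together produce, after integration by parts,
\[
\langle T(t),\vphi\rangle = \tfrac{1}{3}\int_{\pa\{w(t)>0\}}|w_x(t,x)|^3\vphi(x)\,d\mathcal H^0(x).
\]

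It remains to show $\rho(t,\cdot)=\chi_{\Sigma(t)}$. By construction $\rho\equiv 1$ on $\Sigma(t)$. For $x_0\notin\overline{\Sigma(t)}$, the monotonicity of $s\mapsto\Sigma(s)$ (Corollary following Theorem~\ref{thm:rho}) gives a neighborhood $U$ of $x_0$ disjoint from $\overline{\Sigma(s)}=\overline{\{w(s)>0\}}$ for every $s\leq t$; hence $T$ charges no point of $U\times[0,t]$, and integrating $\pa_t\rho=T$ against $\chi_U$ yields $\rho(t,\cdot)|_U = \rho(0,\cdot)|_U$. The initial trace $\rho(0,\cdot)=\chi_{\{u_{in}>0\}}$ follows by $L^1$ passage to the limit in $\rho^{\eps_k}(0,\cdot)=B^{\eps_k}(u_{in})$, using the pointwise limit $B^\eps(s)\to\chi_{\{s>0\}}$ and dominated convergence. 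Since $\{u_{in}>0\}\subset \Sigma(0)\subset \Sigma(t)$, this forces $\rho(t,x_0)=0$, completing the characterization; the evolution equation in \eqref{eq:besteq} is then exactly what was proved above.

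The principal obstacle is the passage to the limit in the cubic term $\int(u^{\eps_k}_x)^3\vphi'$: hypothesis \eqref{eq:cond0} only controls $u^{\eps_k}_{xx}$ weighted by $\sqrt{u^{\eps_k}}$, which degenerates precisely at $\pa\{w>0\}$, the very locus where the Tanner contact-line mass is produced. The exhaustion argument above resolves this by exploiting that the cubic quantity is controlled through integration by parts in terms of $u^{\eps_k}(u^{\eps_k}_{xx})^2$, plus error terms that vanish due to the uniform smallness of $u^{\eps_k}$ away from $\Sigma(t)$. Any weakening of hypothesis \eqref{eq:cond0} breaks this strong convergence and replaces the equality in \eqref{eq:besteq} by the inequality already obtained unconditionally in Theorem~\ref{thm:sup}, which is why the rigorous derivation of Tanner's law is stated here only conditionally.
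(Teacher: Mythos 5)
Your proposal is correct and takes essentially the same route as the paper: the same decomposition $\pa_t\rho^{\eps_k}=T^{\eps_k}+\pa_x R^{\eps_k}$ with the representation \eqref{eq:vphiT}, the same use of \eqref{eq:cond0} to upgrade weak to strong convergence of $\sqrt{u^{\eps_k}}\,u^{\eps_k}_{xx}$ (the paper phrases this as a limit measure with no excess mass, which is equivalent to your norm-convergence argument), the same interpolation control of the cubic and quadratic gradient terms, and the same support-plus-monotonicity identification of $\rho=\chi_{\Sigma(t)}$. The only step the paper elaborates in more detail is the reduction from space--time convergence to pointwise-in-time statements and the domination needed to integrate $\langle T^{\eps_k}(t),\vphi^4\rangle$ in $t$ (Lemma~\ref{lem:cond}), but your strong $L^2((0,T)\times\Omega)$ convergence delivers exactly this after a further subsequence extraction.
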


\begin{remark}
If we denote $\Sigma(t) = (a(t),b(t))$, then using Remark \ref{rem:oneintervaluniquelimit}, \eqref{eq:besteq} implies
$$
\dot{a}=-\dot{b}=-72 (b-a)^{-6},
$$
for any $t<T$ as long as $a(T)$, $b(T)\notin \pa \Omega$. This  shows that the constant in Corollary~\ref{cor:ODE for the integral} is expected to be sharp.
\end{remark}
\begin{remark}
When $\Sigma(\cdot)$ consists of more than one interval, we can still identify a unique limit $w(t,x)$ if we know the amount of mass in each connected components of $\Sigma(t)$. 
This is possible if we know that mass cannot flow from one component to another. However,
we cannot discard such a phenomenon easily even when  these intervals are separated by  a positive distance. This phenomenon is usually referred to as Ostwald ripening, a mathematical analysis of the phenomenon for the thin film equation can be found in \cite{glasner2009ostwald}.
\end{remark}

\medskip

\paragraph{Organization of the paper}
Section~\ref{sec:apriori} collects the a priori estimates needed for the proof of the main results and Section \ref{sec:lip} establishes the Lipschitz regularity in space of the solution. Section~\ref{sec:rhoeps}, Section~\ref{sec:thmu} and Section~\ref{sec:thmsup} prove Theorem~\ref{thm:rho}, Theorem~\ref{thm:u} and Theorem~\ref{thm:sup} respectively. Section~\ref{sec:cond} 
contains the proof of the conditional result, Theorem \ref{thm:cond1},
and Section~\ref{sec:cor} proves Corollary~\ref{cor:ODE for the integral}.  
 Proposition~\ref{prop:exist} (which states the existence of a solution for $\eps>0$) is proved in Appendix~\ref{ap:exist}.

\medskip

\section{Important inequalities and a priori estimates}\label{sec:apriori}
In this section we present the main integral inequalities that will be used in the proofs. 
While the computations here are done formally without worrying about the regularity of $u^\eps$, the existence of  a solution satisfying the appropriate inequalities is stated at the end of this section (and proved in Appendix \ref{ap:exist}).

\paragraph{Conservation of mass and $L^1$ estimate.}
Integrating equation \eqref{eq:tfe} and using the null flux boundary condition \eqref{eq:bc} yields the following mass conservation equality:
\begin{equation}\label{eq:mass}
 \int_\Omega u^\eps(t,x)\, dx =   \int_\Omega u_{in}(x)\, dx \quad \forall t>0.
 \end{equation}
In particular,  we have
$$  \| u^\eps \|_{  L^\infty((0,\infty);L^1(\Omega))} \leq C.$$

\paragraph{Energy inequality and $H^1$ estimate.}
Classically, solutions of \eqref{eq:tfe} also satisfy the following energy inequality:
\begin{align}
  \int_\Omega \frac1  2 |u_x^\eps(t,x)|^2\, dx 
 + \int_0^t\int_{\{u^\eps>0\}} |\ln(\eps)| (\eps^{3-n} {u^\eps}^n +{u^\eps}^3) |u^\eps _{xxx}  |^2\, dx\, ds\nonumber \\
 \leq
\int_{\Omega} \frac 1 2 |\pa_x u_{in}(x)|^2  \, dx \qquad \forall t>0,\label{eq:energy}
\end{align}
which is obtained by multiplying \eqref{eq:tfe} by $-u^\eps_{xx}$ and integrating.
Together with the $L^1$ estimate above, this implies:
\begin{equation*} 
\| u^\eps \|_{  L^\infty((0,\infty);H^1(\Omega))} \leq C
\end{equation*}
which gives in particular
\begin{equation}\label{eq:uLinfty}
\sup_{x\in \Omega, t>0} u^\eps(t,x) \leq C.
\end{equation}

\paragraph{Entropy inequalities}
Solutions of \eqref{eq:tfe}-\eqref{eq:bc} 
also satisfy so-called entropy inequalities.
In this paper, we will make heavy use of the following entropy-like inequality
which follows from the choice of $B^\eps$ (see \eqref{eq:propertyBeps1}) and can be proved by integrating \eqref{eq:tfed} over $(0,T)\times\Omega$:
\begin{equation}\label{eq:entropy0}
\int_\Omega \rho^\eps(t,x)\;dx\geq \int_0^t \int_\Omega u^\eps |u^\eps_{xx}|^2\;dx\,ds
+ \int_\Omega \rho^\eps(0,x)\;dx .
\end{equation}
Note that this entropy is increasing  - one might thus prefer to think of $1-\rho^\eps$ as the entropy, but this quantity is not non-negative for fixed $\eps>0$, even though it is non-negative in the limit $\eps\to0$.

Since $s\mapsto B^\eps(s)$ is increasing, \eqref{eq:uLinfty} implies
\begin{equation}\label{eq:rhoeps01}
0\leq \rho^\eps(t,x)  = B^\eps(u^\eps(t,x)) \leq B^\eps(\|u^\eps\|_{L^\infty}) \leq B^\eps(C) \mbox{ for all $(t,x)$.}
\end{equation}
The right hand side of this inequality converges to $1$ as $\eps$ goes to zero (uniformly in $t$ and $x$), so we deduce
$$ 
\sup_{x\in \Omega, t>0} \rho^\eps(t,x) \leq C, \quad \mbox{ and }  \lim_{\eps\to0} \sup_{x\in \Omega, t>0} \rho^\eps(t,x) \leq 1.
$$
Using this bound and the fact that $\rho^\eps(0,x)\geq 0$, the inequality \eqref{eq:entropy0} yields
\begin{equation} \label{eq:entropydiss}
\int_0^\infty \int_\Omega u^\eps |u^\eps_{xx}|^2\;dx \leq  C|\Omega|.
\end{equation}

The inequalities above can easily be derived from \eqref{eq:tfe} if we assume that $u^\eps$ is smooth enough (in which case we get equalities). In Appendix~\ref{ap:exist}, we prove:

\begin{proposition}\label{prop:exist}
Let $n\in(0,3)$.
For all $\eps>0$, there exists a non-negative weak solution $u^\eps(t,x)$ of \eqref{eq:tfe} in  $L^\infty((0,T);H^1(\Omega))$ such that
\begin{equation}\label{eq:uuxxx is a function}
    u^\eps u^\eps_{xxx}\in L^2((0,T)\times\Omega) \qquad \mbox{ for all } \eps>0
\end{equation}
and the equations \eqref{eq:mass}, \eqref{eq:energy}, \eqref{eq:entropy0} hold.

Furthermore, the function $\rho^\eps(t,x)$ defined by \eqref{eq:rhoeps} solves \eqref{eq:tfed} in the sense of distribution, with the function $T^\eps \in L^2((0,T); L^1(\Omega))$ satisfying
\begin{equation}\label{eq:vphiT} 
\langle T^\eps , \vphi\rangle_{\mathcal D',\mathcal D}   =  \int_\Omega u^\eps (u^\eps_{xx})^2 \vphi \, dx - \frac 5 {6}\int_\Omega {u_x^\eps}^3 \vphi' \, dx- \frac 1 2 \int_\Omega u^\eps {u^\eps_x}^2 \vphi''\, dx.
\end{equation}
for all $\vphi \in \mathcal D(\overline \Omega)$.
\end{proposition}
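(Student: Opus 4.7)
The plan is to adapt the classical Bernis--Friedman regularization strategy. I would first approximate the mobility $M^\eps(u)=\eps^{3-n}u^n+u^3$ by a non-degenerate, smooth, strictly positive $M^\eps_\delta$ (for instance $M^\eps_\delta(u)=M^\eps(|u|)+\delta$) and solve the resulting uniformly parabolic problem $|\ln\eps|^{-1}\pa_t u+\pa_x(M^\eps_\delta(u)\,u_{xxx})=0$ with the same boundary and initial data, via a Galerkin scheme in the eigenbasis of the biharmonic operator associated with the boundary conditions \eqref{eq:bc}. Global existence of the Galerkin truncations follows from the $H^1$ energy estimate, and one obtains a smooth solution $u^{\eps,\delta}$ in the limit.

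I would then derive $\delta$-uniform a priori estimates by reproducing three formal identities at the approximate level: integrating the equation in $x$ yields mass conservation \eqref{eq:mass}; multiplying by $-u^{\eps,\delta}_{xx}$ and integrating by parts using \eqref{eq:bc} produces the energy identity \eqref{eq:energy}; and multiplying by $|\ln\eps|B^{\eps\prime}(u^{\eps,\delta})$, using the algebraic identity $M^\eps(s)\,B^{\eps\prime\prime}(s)=-s/|\ln\eps|$ that follows from \eqref{eq:propertyBeps1}, and performing two integrations by parts (with boundary terms vanishing by \eqref{eq:bc}) produces \eqref{eq:entropy0}. Aubin--Lions compactness, based on the $H^1$ spatial bound combined with a control on $\pa_t u^{\eps,\delta}$ extracted from the PDE, lets me pass to the limit $\delta\to 0$ and obtain a non-negative $u^\eps\in L^\infty(0,T;H^1(\Omega))$ inheriting \eqref{eq:mass} and, by lower semicontinuity of the dissipation functionals, the inequalities \eqref{eq:energy} and \eqref{eq:entropy0}; non-negativity of the limit is obtained in the standard Bernis--Friedman manner by exploiting the blow-up of the entropy integrand at zero.

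To verify the regularity \eqref{eq:uuxxx is a function} and the weak identity \eqref{eq:vphiT}, I would integrate by parts three times in $-\int u^\eps u^\eps_x u^\eps_{xxx}\,\vphi\,dx$, distributing derivatives between $u^\eps$, $u^\eps_x$ and $\vphi$; the boundary terms vanish by \eqref{eq:bc} (in particular $u^\eps_x=0$ on $\pa\Omega$), producing the three-term expression on the right of \eqref{eq:vphiT}. Each of those integrals lies in $L^2_t L^1_x$ thanks to the $L^\infty$-bound on $u^\eps$ from \eqref{eq:uLinfty}, the $L^\infty_t L^2_x$-bound on $u^\eps_x$, and the entropy $L^1_{t,x}$-bound on $u^\eps|u^\eps_{xx}|^2$. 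The $L^2_{t,x}$-regularity of $u^\eps u^\eps_{xxx}$ itself is obtained by combining the energy bound $\sqrt{M^\eps(u^\eps)}\,u^\eps_{xxx}\in L^2$ with the $L^\infty$-bound on $u^\eps$ (using the decomposition $u^\eps u^\eps_{xxx}=\pa_x(u^\eps u^\eps_{xx})-u^\eps_x u^\eps_{xx}$ and the entropy bound to cover the range $n\in[2,3)$ where the energy bound alone is insufficient).

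The main obstacle is the compatibility between the regularization $M^\eps_\delta$ of the mobility and an appropriate approximation $B^\eps_\delta$ of the entropy, chosen so that the key identity $M^\eps_\delta\,B^{\eps\prime\prime}_\delta\approx -s/|\ln\eps|$ is preserved at the regularized level; this is what allows the energy and entropy inequalities to be passed to the limit $\delta\to 0$ simultaneously while preserving non-negativity of the limit, and it is the delicate technical point for closing all the nonlinear passages to the limit in the PDE.
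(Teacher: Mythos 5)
Your proposal follows essentially the same route as the paper's Appendix~\ref{ap:exist}: Bernis--Friedman regularization $f_\delta=f+\delta$ of the mobility, the mass/energy/entropy identities derived at the regularized level and passed to the limit by lower semicontinuity, and a $\delta$-compatible regularized entropy --- the paper takes $B_\delta(s)=\int_0^s\int_r^\infty u/f_\delta(u)\,du\,dr$, so that $f_\delta(s) B_\delta''(s)=-s$ holds exactly, which is precisely the compatibility issue you single out as the delicate point. The one ingredient your sketch leaves implicit is the classical Bernis--Friedman entropy $H_\delta(s)=\int_s^\infty\int_r^\infty f_\delta(u)^{-1}\,du\,dr$, which the paper uses not only for non-negativity of the limit but also to bound $u^\delta_{xx}$ in $L^2$ uniformly in $\delta$; this is what gives the strong $L^p$ convergence of $u^\delta_x$ needed to pass to the limit in $T^\delta$ and in the cubic term of \eqref{eq:vphiT}, something the degenerate bound on $u^\delta|u^\delta_{xx}|^2$ alone does not provide.
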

\begin{remark}\label{rem:reg}
When $n\in  (0,2)$, the solution $u^\eps$ satisfies in addition that
$u^\eps\in  L^2((0,T);H^2(\Omega))$. In particular, $u^\eps (u^\eps_{xx})^2$ makes sense as a function in $L^1$ (and vanishes in the set $\{u^\eps=0\}$).
When $n\in [2,3)$, then (see \cite{BP96}) we only have ${u^\eps}^{3/2} \in  L^2((0,T);H^2(\Omega))$ and $({u^\eps}^{3/4})_x\in L^4$. Since we can write
$ u^{1/2} u_{xx} = \frac 2 3 (u^{3/2})_{xx} - \frac 89 ((u^{3/4})_x)^2$
this is again enough to make sense of the quantity 
$u^\eps (u^\eps_{xx})^2$ as an $L^1$ function for a.e. $t\in(0,T)$.

Note also that we can write
$u^\eps u^\eps_{xxx} = (u^\eps u^\eps_{xx})_x-\frac{1}{2}(u^{\eps2}_x)_x $ which makes sense  in all cases as a distribution. Proposition \ref{prop:exist} states that this distribution is in fact a function in $L^2$ for all $\eps>0$.
However, all these estimates degenerate when $\eps\to0$, which is why \eqref{eq:vphiT}  will be useful in the sequel.
\end{remark}

\section{Optimal (Lipschitz) regularity in space for all $t>0$.}\label{sec:lip}
In this section, we prove Proposition \ref{prop:lip}, which implies in particular that 
$u^\eps$ is Lipchitz uniformly in $\eps$ for almost every $t>0$.
The proof   relies on the following Lemma:
\begin{lemma}\label{lem:lipschitzbound}
For any $w\in C^1(\Omega)$ satisfying $w'=0$ on $\pa\Omega$,
there exists a constant $C$ depending only on $\|w\|_\infty $
such that for any $\eps\in(0,1)$ we have the inequality
\begin{equation*}
\|w'\|^2_{L^\infty}\le C \left(1+\int_{\{w>0\}} |\ln(\eps)| (\eps^{3-n} w^n +w^3) |w '''  |^2\, dx \right)^{\frac{1}{2}}.
\end{equation*}
\end{lemma}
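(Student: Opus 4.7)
The plan is to work at a maximizer $x_0\in\overline{\Omega}$ of $|w_x|$, with $M:=\|w_x\|_{L^\infty}$, and to exploit the boundary condition $w_x|_{\partial\Omega}=0$ together with the degenerate weight $g(w):=\eps^{3-n}w^n+w^3$ to derive an integration-by-parts identity whose right-hand side is controlled by $\tilde E:=\int_{\{w>0\}}g(w)(w_{xxx})^2\,dx$. First I would use $w_x|_{\partial\Omega}=0$ to conclude that $x_0$ is interior when $M>0$, so $w_{xx}(x_0)=0$; without loss of generality $w_x(x_0)=M>0$, and since $w\ge 0$ with $w_x(x_0)\ne 0$, necessarily $w(x_0)>0$. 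Then I select $a\le x_0$ to be the largest point with $w_x(a)=0$ (existence is guaranteed by $w_x|_{\partial\Omega}=0$); on $(a,x_0)$ we have $w_x>0$, so $w$ is strictly increasing there.

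The next step is integration by parts: from the fundamental theorem, $M^2 = 2\int_a^{x_0}w_x w_{xx}\,dx$, and one IBP using $w_{xx}(x_0)=0$ gives
\begin{equation*}
M^2 = -2\,w(a)w_{xx}(a) - 2\int_a^{x_0} w\,w_{xxx}\,dx.
\end{equation*}
The boundary term is non-positive (either $w(a)=0$ when $a$ lies at an edge of the support, or $a$ is an interior minimum of $w_x$ which forces $w_{xx}(a)\ge 0$), so $M^2\le 2\bigl|\int_a^{x_0} w\,w_{xxx}\,dx\bigr|$. The Cauchy--Schwarz inequality with weight $g(w)$ then gives
\begin{equation*}
M^2 \le 2\,\tilde E^{1/2}\left(\int_a^{x_0}\frac{w^2}{g(w)}\,dx\right)^{1/2}.
\end{equation*}

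The main remaining task is to bound the dual integral $\int_a^{x_0} w^2/g(w)\,dx$ by $C(\|w\|_\infty)\,|\ln\eps|$. Using the identity $w^2/g(w) = 1/(\eps^{3-n}w^{n-2}+w)$ and the monotonicity of $w$ on $(a,x_0)$, the change of variable $u = w(x)$ rewrites this as an integral of $u^2/g(u)$ weighted by the Jacobian $1/w_x(x(u))$. The crucial observation, which mirrors the scaling built into the construction of $B^\eps$, is that $\int_0^{\|w\|_\infty} u^2/g(u)\,du$ is of order $|\ln\eps|$; for instance, when $n=2$ this integral equals $\log(1+\|w\|_\infty/\eps)$. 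To handle the Jacobian $1/w_x$, I would split $(a,x_0)$ into the subinterval where $w_x\ge M/2$ (on which $1/w_x\le 2/M$) and its complement (estimated directly), then absorb the resulting powers of $M$. Squaring the final inequality and combining with the trivial case $M=0$ yields $M^4\le C(\|w\|_\infty)(1+|\ln\eps|\tilde E)$, equivalent to the stated estimate.

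The principal obstacle is the dual-integral estimate: the naive pointwise bound $w^2/g(w)\le 1/\eps$ would give $\int w^2/g(w)\lesssim|\Omega|/\eps$, and hence only $M^2\lesssim\eps^{-1/2}\tilde E^{1/2}$, much weaker than the desired $|\ln\eps|^{1/2}\tilde E^{1/2}$. Extracting the sharper logarithmic scaling requires simultaneously exploiting the monotonicity of $w$ on $(a,x_0)$ and the structure of $g$ near the degeneracy through the change of variables above. This is closely reminiscent of the construction of the approximate indicator function $B^\eps$, and one expects the proof in the paper to leverage the properties of $B^\eps$ directly to produce the logarithmic factor.
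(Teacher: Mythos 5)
Your setup --- locating $M:=\|w'\|_{L^\infty}$ at an interior point $x_0$ with $w''(x_0)=0$, integrating $M^2=2\int_a^{x_0}w'w''\,dx$ by parts to reach $-2\int_a^{x_0}ww'''\,dx$, and checking the sign of the boundary term at $a$ --- is sound (modulo minor care when $w(a)=0$, where $w''(a)$ need not be defined) and is in fact equivalent to the paper's device: the paper works with $G=\tfrac12(w')^2-ww''$, whose derivative on $\{w>0\}$ is $G'=-ww'''$, and your identity is exactly $\tfrac12M^2=G(x_0)-G(a)$ with $G(a)\le \tfrac12 w'(a)^2=0$. The genuine gap is the dual-integral estimate, and the fix you propose does not close it. After Cauchy--Schwarz you must bound
\begin{equation*}
\int_a^{x_0}\frac{w^2}{\eps^{3-n}w^n+w^3}\,dx=\int_a^{x_0}\frac{dx}{\eps^{3-n}w^{n-2}+w}
\end{equation*}
by $C|\ln\eps|$, but this quantity is genuinely not $O(|\ln\eps|)$: if $w(a)=w'(a)=0$ and $w(x)\approx(x-a)^2$ near $a$ (compatible with all hypotheses, with the maximum of $w'$ attained far from $a$), then for $n=2$ the integral behaves like $\int\frac{dx}{\eps+(x-a)^2}\sim\eps^{-1/2}$, which would only yield $M^2\lesssim\eps^{-1/4}\tilde E^{1/2}$. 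Your splitting cannot rescue this, because the bad region is precisely a neighborhood of $a$ where $w'$ is small (so the Jacobian $1/w'$ in the change of variables blows up) and simultaneously $w$ is small (so the pointwise bound on the integrand is of order $\eps^{-1}$ for $n\le2$ and the integrand is even unbounded for $n\in(2,3)$); no absorption of powers of $M$ turns an $\eps^{-1/2}$ into a $|\ln\eps|$.

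The paper avoids this by never estimating $\int|ww'''|\,dx$ itself: it estimates the weaker quantity $\int|G'|\,|w'|^{1/2}\,dx$ on intervals where $w'$ has a fixed sign, so that H\"older produces the dual integral $\int\frac{w}{\eps^{3-n}w^{n-1}+w^2}\,|w'|\,dx$ carrying a full factor of $|w'|$; the substitution $s=w(x)$ then has no Jacobian singularity and gives $\int_0^{\|w\|_\infty}\frac{s\,ds}{\eps^{3-n}s^{n-1}+s^2}\le C+\ln\|w\|_\infty+|\ln\eps|$. The price is that $\int|G'|\,dx$ is only controlled on subintervals where $w'\ge1$, and a second argument is needed there: on a maximal such subinterval $(c,d)$ the function $G$ takes the value $1/2$ at some point (from the signs of $w''$ at $c$ and $d$), hence $\|G\|_{L^\infty(c,d)}\le C(1+M_\eps)$, and evaluating $G$ at an interior critical point of $w'$ (where $w''=0$, so $G=\tfrac12(w')^2$) bounds $M$. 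Some mechanism of this kind for inserting a factor of $|w'|$ into the dual integral --- together with a compensating argument where $|w'|$ is small --- is the heart of the lemma and is missing from your proposal.
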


Before proving the lemma, we note that Proposition \ref{prop:lip} follows by 
combining Lemma~\ref{lem:lipschitzbound} and the energy inequality \eqref{eq:energy}, provided the function $x\mapsto u^\eps(t,x)$ has the required $C^1$ regularity to apply Lemma \ref{lem:lipschitzbound} (at least for almost every $t$).
This regularity follows from Remark \ref{rem:reg}: 
When $n\in (0,2)$ the function $u^\eps(t,\cdot)$ is in $H^2(\Omega)$ for a.e. $t>0$ and therefore in $C^{1,1/2}(\Omega)$. When $n\in[2,3)$, we can write
$ \frac{1}{3}({u^\eps_x}^3)_x = {u_x^\eps}^2 u^\eps_{xx} = ({u^\eps}^{-1/2}{u_x^\eps}^2) {u^\eps}^{1/2}u^\eps_{xx}$, 
and Remark \ref{rem:reg} then implies
that ${u^\eps_x}^3 \in W^{1,1}(\Omega)$ for a.e. $t>0$. This implies in particular that $u^\eps_x$ is continuous, as needed.

\begin{proof}[Proof of Lemma~\ref{lem:lipschitzbound}]
We introduce the notations
$$M_\eps :=  \left( |\ln\eps| \int_{\{w>0\}} ( w^3+\eps^{3-n} w^n) |w '''  |^2\, dx\right)^{1/2}$$
and 
$$ G(x) := \frac{1}{2} {w'}(x)^2-w(x) w ''(x).  $$
If $M_\eps=\infty$ then the result follows trivially, so we can assume that  $M_\eps<\infty$. This implies in particular that $w\in H^3_{loc}(\{w>0\})$. A simple computation then gives
 $$ 
 G' = -w w '''\qquad\mbox{on $\{w>0\}$.}
 $$
We note that the $C^1$ regularity of $w$ and the boundary condition imply that
 $w'=0$ on $\pa\{w>0\}$.
 
Let $(a,b)\subset\{w>0\}$ be an interval on which $w'(x)$ has a constant sign (say $w'(x)\geq0$ for $x\in (a,b)$) and $w'=0$ at $a$ and $b$. Then we claim that 
\begin{equation}\label{eq:ineq2}
\int_a^b |G'| |w'|^{1/2}\, dx \leq C M_\eps
\end{equation}
for some constant $C$ independent of $\eps$ (and independent of $a$ and $b$).
To prove this, we first write (using H\"older inequality):
\begin{align}
\int_a^b |G'| |w'|^{1/2}\, dx 
& \leq  \left(\int_a^b w (w^2+\eps^{3-n} w^{n-1}) |w'''|^2  dx\right)^{1/2} 
 \left( \int_a^b \frac{w}{\eps^{3-n} w^{n-1} +w^2}|w'| \, dx\right)^{1/2} \nonumber \\
& \leq M_\eps \left(\frac 1{|\ln(\eps)| } \int_a^b\frac{w}{\eps^{3-n} w^{n-1} +w^2} |w'|\, dx\right)^{1/2}\label{eq:ineq1}
\end{align}
Furthermore, because $w'\geq 0$ on $(a,b)$, we can write\begin{align*}
\int_a^b\frac{w}{\eps^{3-n} w^{n-1} +w^2} |w'|\, dx & =\int_a^b\frac{w}{\eps^{3-n} w^{n-1} +w^2} w'\, dx\\  
& =\int_{w(a)}^{w(b)}\frac{s}{\eps^{3-n} s^{n-1} +s^2}\, ds\\
& \le \int_0^\eps\frac{1}{\eps^{3-n} s^{n-2}} \,ds+\int_\eps^{\|w\|_{\infty}}\frac{1}{s}\,ds\\
& \leq C + \ln(\|w\|_\infty)+|\ln \eps|,
\end{align*}
where we have used that $n<3$. Inserting this in \eqref{eq:ineq1}, we deduce \eqref{eq:ineq2}.
\medskip

Next, let $(c,d)$ be a subinterval of $(a,b)$ such that $w'\geq 1$ in $(c,d)$ (and $w'(x) =1$ for $x=c$ and $x=d$). If there are no such interval, then $w'\leq 1$ on $(a,b)$, and we are done.
If such an interval exists, then we note that \eqref{eq:ineq2} gives
\begin{equation}\label{eq:Gcd}
\int_c^d |G'| \, dx \leq CM_\eps.
\end{equation}

Furthermore, by our choice of the interval $(c,d)$ we have $w''(c)\geq 0$ and $w''(d)\leq 0$. So the definition of $G$ implies that $G(c) \leq \frac{1}{2} w'(c)^2 = \frac 1 2$ and  $G(d)\geq \frac{1}{2} w'(d)^2 = \frac 1 2$, which implies by continuity of $G$ that there exists a point $e\in (c,d)$, such that $G(e)=1/2$.

It follows (using \eqref{eq:Gcd}) that there exists a constant $C$ such that
$$ \| G\|_{L^\infty(c,d)}\leq C(1+M_\eps).$$
To conclude the argument, we note that $w'$ has a local maximum at some $x_0$ in $c,d$. At such a point, we have $w''(x_0)=0$ and so
$$ G(x_0 ) = \frac{1}{2} w'(x_0)^2.$$
We deduce that $w'(x)^2\leq 1+ \| G\|_{L^\infty(c,d)} \leq C(1+M_\eps)$ for all  $x\in(c,d)$.  Since we can repeat this argument on any such subinterval and the bound is independent of the size of the intervals involved, we deduce
$$ 
\|w'\|_{L^{\infty}(\Omega)}^2 \leq C(1+ M_\eps). 
$$
\end{proof}

\section{The function $\rho^\eps$ and its limit}\label{sec:rhoeps}
We now turn to the proof of our first main results.

\subsection{Proof of Proposition \ref{prop:rho}} 
We already know that $\rho^\eps$ is bounded in $L^\infty((0,\infty)\times\Omega)$ (see \eqref{eq:rhoeps01}). 
We thus only need  to show that $\pa_t \rho^\eps$ is bounded.  For that, we recall (see \eqref{eq:tfed}) that 
\begin{equation}\label{eq:rhoeps1} 
 \pa_t \rho^\eps = \pa_x R^\eps+T^\eps
\end{equation}
where
\begin{align*}
R^\eps &= -|\ln(\eps)| B^{\eps\prime}(u^\eps)(\eps^{3-n}{u^{\eps}}^{(n-1)} +{u^{\eps}}^2)u^\eps  u_{xxx}^\eps \\[5pt]
T^\eps  &= -u^\eps u_x^\eps u_{xxx}^\eps,
\end{align*}
where $T^\eps$ can also be written as in \eqref{eq:vphiT}.

We start with the following Lemma,
which together with \eqref{eq:energy} implies that
$$  \pa_x R^\eps \mbox{ is bounded in } L^2((0,T);W^{-1,1}(\Omega)).$$
\begin{lemma}\label{lem:Reps00}
There exists a constant $C$ (independent of $t$ and $\eps$) such that for all $t>0$, we have
\begin{equation}\label{eq:Rebd}
\int_\Omega |R^\eps(t,x)|\, dx \leq \frac{C}{|\ln\eps|^{1/2}} \left( \int_{\{u^\eps>0\}} |\ln\eps| (\eps^{3-n} {u^\eps}^n + {u^\eps}^3) |u^\eps_{xxx}|^2\, dx\right)^{1/2}.
\end{equation}
In particular, $R^\eps$ is uniformly bounded in $L^2((0,T);L^1(\Omega))$.
\end{lemma}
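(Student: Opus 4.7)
The natural approach is to apply the Cauchy--Schwarz inequality in order to split $|R^\eps|$ into a factor that can be absorbed into the energy dissipation and a remainder that must be controlled uniformly in $\eps$. Denoting $M^\eps := \eps^{3-n}(u^\eps)^n + (u^\eps)^3$, note that the definition of $R^\eps$ in \eqref{eq:RTe} can be rewritten as $R^\eps = -|\ln\eps| B^{\eps\prime}(u^\eps) M^\eps u^\eps_{xxx}$. I would then write
\[
|R^\eps| = \bigl[\sqrt{|\ln\eps| M^\eps}\,|u^\eps_{xxx}|\bigr] \cdot \bigl[\sqrt{|\ln\eps| M^\eps}\,B^{\eps\prime}(u^\eps)\bigr]
\]
and apply Cauchy--Schwarz in the space variable. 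Since the first factor squared and integrated is exactly the dissipation $|\ln\eps|\int_\Omega M^\eps |u^\eps_{xxx}|^2\,dx$ appearing on the right-hand side of \eqref{eq:Rebd}, the whole claim reduces to establishing
\[
\int_\Omega |\ln\eps|\, M^\eps\, (B^{\eps\prime}(u^\eps))^2\,dx \le \frac{C}{|\ln\eps|}.
\]

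The next step exploits the specific scaling of $B^\eps$. Using $B^{\eps\prime}(s) = (\eps|\ln\eps|)^{-1}B'(s/\eps)$ and setting $s(x) := u^\eps(x)/\eps$ (so that $u^\eps = \eps s$ and $M^\eps = \eps^3(s^n+s^3)$), a direct pointwise computation yields
\[
|\ln\eps|\, M^\eps\,(B^{\eps\prime}(u^\eps))^2 = \frac{\eps}{|\ln\eps|}\, f(u^\eps/\eps), \qquad f(s) := (B'(s))^2(s^n+s^3).
\]
Combined with mass conservation $\|u^\eps\|_{L^1(\Omega)} \le C$, it therefore suffices to establish the pointwise majoration $f(s) \le C(1+s)$ uniformly on $(0,\infty)$, since then $\eps \int_\Omega f(u^\eps/\eps)\,dx \le C(\eps|\Omega| + \|u^\eps\|_{L^1}) \le C$.

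The pointwise bound on $f$ is obtained from the asymptotics of $B'(s) = \int_s^\infty \frac{dv}{v^{n-1}+v^2}$. As $s\to\infty$ we have $s B'(s)\to 1$ (recalled explicitly in the excerpt), so $f(s) \sim s^{n-2} + s \sim s$ since $n < 3$. As $s\to 0^+$, the behavior depends on the regime of $n$: for $n\in(0,2)$ one has $B'(0) < \infty$ so $f(s) \le (B'(0))^2(s^n+s^3)\to 0$; for $n=2$ the divergence $B'(s)\sim|\ln s|$ is beaten by $s^n+s^3 \sim s^2$, giving $f(s)\sim s^2|\ln s|^2 \to 0$; for $n\in(2,3)$, $B'(s)\sim s^{2-n}/(n-2)$ and $f(s)\sim s^{4-n}\to 0$. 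By continuity on $(0,\infty)$ together with these endpoint bounds, $f(s)\le C(1+s)$. This establishes \eqref{eq:Rebd}; the $L^2((0,T);L^1(\Omega))$ bound on $R^\eps$ then follows by squaring \eqref{eq:Rebd}, integrating in time, and invoking the energy inequality \eqref{eq:energy} to control $\int_0^T|\ln\eps|\int_\Omega M^\eps|u^\eps_{xxx}|^2 dx\,dt$. The main technical point is the endpoint analysis at $s=0$, which must be split according to the regime of $n$ because $B'(s)$ is unbounded there when $n \ge 2$; the algebraic factor $s^n+s^3$ precisely compensates this in each case.
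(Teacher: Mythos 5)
Your proof is correct and follows essentially the same route as the paper: Cauchy--Schwarz splitting off the weighted dissipation, then the scaling identity $B^{\eps\prime}(s)=(\eps|\ln\eps|)^{-1}B'(s/\eps)$ reducing everything to a pointwise bound on $(B'(s))^2(s^{n-1}+s^2)$ combined with conservation of mass. The paper phrases the key estimate as $\sup_{s>0}(B'(s))^2(s^{n-1}+s^2)<\infty$ (so $f(s)\le Cs$) rather than your $f(s)\le C(1+s)$, but this is the same computation and the same endpoint analysis at $s\to0$ and $s\to\infty$ using $n<3$.
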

\begin{proof}
By the regularity \eqref{eq:uuxxx is a function} and H\"older's inequality, we have
\begin{align*}
\left( \int_\Omega |R^\eps(t,x)| \, dx\right)^2  
=\left( \int_{\{u^\eps>0\}} |R^\eps(t,x)| \, dx\right)^2  
& \leq \left(\int_{\{u^\eps>0\}} |\ln(\eps)|(\eps^{3-n} {u^\eps}^n+{u^{\eps}}^3) |\pa_{xxx}u^\eps|^2 \, dx\right)\\
& \qquad \times \sup_{t\in[0,T]}
\int_\Omega |\ln(\eps)|B'_\eps(u^\eps)^2 (\eps^{3-n} {u^\eps}^{n-1} +{u^{\eps}}^2)u^\eps \, dx.
\end{align*}
Using that 
$$
B_\eps'(s)=(\eps|\ln(\eps)|)^{-1}\int_{s/\eps}^\infty \frac{dr}{r^{n-1}+r^2}
$$ 
we deduce
\begin{align*}
|\ln(\eps)|B'_\eps(u^\eps)^2 (\eps^{3-n} {u^\eps}^{n-1} +{u^{\eps}}^2))&= \frac{1}{|\ln(\eps)|}\left(\int_{u^\eps/\eps}^\infty \frac{dr}{r^{n-1}+r^2}\right)^2\left(\left(\frac{{u^\eps}}{\eps}\right)^{n-1}+\left(\frac{{u^\eps}}{\eps}\right)^2\right)\\
&\le \frac{1}{|\ln(\eps)|}\sup_{s\in[0,\infty)}\left[ \left(\int_{s}^\infty \frac{dr}{r^{n-1}+r^2}\right)^2\left(s^{n-1}+s^2\right)\right]\\
&=\frac{C}{|\ln(\eps)|},
\end{align*}
where we have used that $n<3$. The uniform bound in $L^2((0,T);L^1(\Omega))$ follows from the energy dissipation inequality \eqref{eq:energy}.
\end{proof}

In order to bound the term $T^\eps$, we first write
\begin{align*}
T^\eps&=-u^\eps \pa_x u^\eps\pa_{xxx}u^\eps\nonumber \\
&=-\pa_x(u^\eps  u_x^\eps u_{xx}^\eps)+ | u_{x}^\eps|^2u_{xx}^\eps+u^\eps |u_{xx}^\eps|^2 \nonumber \\
&=\pa_x\left(\frac{( u_{x}^{\eps})^3}{3}- u^\eps u_x^\eps u_{xx}^\eps\right)+ u^\eps |u_{xx}^\eps|^2.
\end{align*}
Using \eqref{eq:entropydiss} and \eqref{eq:Lipu}, we easily deduce that
$$ T^\eps(t,x) \mbox{ is bounded in } L^1(0,T; W^{-1,1}(\Omega)).$$
This completes the proof of Proposition \ref{prop:rho} (by a classical application of Lions-Aubin's Lemma).
\begin{remark}
We notice that if we have the stronger hypothesis that
$$
\int_{\Omega} u^\eps |u_{xx}^\eps|^2\,dx \qquad\mbox{is uniformly integrable in $(0,T)$,}
$$
then $\{\rho^\eps\}_{\eps>0}$ is pre-compact in $C^0(0,T;W^{-1,1})$ and $\pa_t\rho\in L^1(0,T;W^{-1,1})$.
\end{remark}

\subsection{Proof of Theorem \ref{thm:rho}}
From now on, we fix a subsequence $\eps_k \to 0$ such that
\begin{equation*} 
 \rho^{\eps_k} \to \rho \quad \mbox{ strongly in  $L^p(0,T;W^{-1,1}(\Omega))$ and weak-* in $L^\infty$} 
 \end{equation*}
(such a subsequence exists thanks to Proposition \ref{prop:rho}).

\begin{remark}
It is possible to get a subsequence such that $\rho^{\eps_k}$ converges for all $t>0$, instead of a.e. $t>0$. This is done by noticing that 
$$
\pa_t\rho^{\eps_k}- u^{\eps_k} (u^{\eps_k}_{xx})^2\in L^p((0,T);W^{-1,1})\qquad\mbox{for some $p>1$}
$$
and that $u^{\eps_k} (u^{\eps_k}_{xx})^2$ is always positive. Therefore by applying Aubin-Lions Lemma for a part of $\rho^{\eps_k}$ and the pointwise convergence of monotone functions for the rest of $\rho^{\eps_k}$, we can obtain a pointwise limit for every $t\in(0,T)$.
\end{remark}

The first statement of Theorem \ref{thm:rho}, inequalities \eqref{eq:rho01}, follows immediately by 
passing to the limit in inequalities  \eqref{eq:rhoeps01}.
The second statement of the theorem (the fact that $\rho(t,x)$ is non-decreasing with respect to $t$) is more delicate and its proof will occupy the rest of this section.

\paragraph{Preliminary.}
First, we note that the energy inequality \eqref{eq:energy} implies that the
function
$$h^\eps(t):=   \int_{\{u^\eps>0\}}   (\eps^{3-n} {u^\eps}^n +{u^\eps}^3) |u^\eps _{xxx}  |^2\, dx$$
satisfies
$$ \int_0^\infty  h^\eps(t)\, dt \leq |\ln(\eps)|^{-1}\int_{\Omega} \frac 1 2 |\pa_x u_{in}(x)|^2  \, dx $$ and thus converges to $0$ strongly in $L^1(0,T)$.
So up to another subsequence (still denoted $\eps_k$), we can further assume that 
\begin{equation}\label{eq:limh}
h^{\eps_k}(t) \to 0 \qquad  \mbox{ for all } t\in\P', 
\end{equation}
where $\P'\subset\P\subset(0,T)$ is a set of full measure. Throughout the proof, the set of full measure that we work on gets progressively restricted. To avoid burdensome notation, we do not relabel the sets and it is always denoted by $\P$.

\medskip

We note that we construct the subsequence $\eps_k$ with the convergence of $\rho^{\eps_k}$ in mind, rather than that of $u^{\eps_k}$.
However,  for all $t>0$, $\{u^{\eps_k}(t)\}_{k\in \NN}$ is bounded in $H^1(\Omega)$ and therefore is pre-compact in $C^{0}(\Omega)$.
Using \eqref{eq:limh} we can then prove:
\begin{proposition}\label{prop:uacc}
Let $t\in \P$. Then any accumulation point $v\in C^{0}(\Omega) \cap H^1(\Omega)$ of the sequence $\{u^{\eps_k}(t)\}_{k\in \NN}$ satisfies
\begin{equation}\label{eq:parabola} v'''=0 \mbox{ in } \{v>0\}
\end{equation}
and 
\begin{equation}\label{eq:vmass} 
\int_\Omega v(x)\, dx = \int_\Omega u_{in}(x)\, dx.
\end{equation}
Furthermore, we have
\begin{equation}\label{eq:bcv}
\mbox{either $v=0$ or $v'=0$ on $\pa\Omega$.}
\end{equation}
\end{proposition}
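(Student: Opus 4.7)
The plan is: given any accumulation point $v\in C^0(\Omega)\cap H^1(\Omega)$ of $\{u^{\eps_k}(t,\cdot)\}$, first extract a further subsequence (still denoted $\eps_k$) along which $u^{\eps_k}(t,\cdot)\to v$ in $C^0(\overline\Omega)$. This extraction is available because the uniform $H^1(\Omega)$-bound from \eqref{eq:energy} and \eqref{eq:uLinfty} yields pre-compactness in $C^0(\overline\Omega)$ via one-dimensional Rellich-Kondrachov. For \eqref{eq:parabola}, take any open $U\Subset\{v>0\}$: there $v\geq c>0$ on $\overline U$, so by uniform convergence $u^{\eps_k}\geq c/2$ on $U$ for $k$ large, and
\[
\int_U |u^{\eps_k}_{xxx}|^2\,dx\;\leq\;\frac{8}{c^3}\int_U (u^{\eps_k})^3|u^{\eps_k}_{xxx}|^2\,dx\;\leq\;\frac{C}{c^3}\,h^{\eps_k}(t)\;\longrightarrow\;0
\]
by \eqref{eq:limh}. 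Since also $u^{\eps_k}_{xxx}\to v'''$ in $\mathcal D'(U)$, this forces $v'''=0$ on $U$; exhausting $\{v>0\}$ by such sets gives \eqref{eq:parabola}. Mass conservation \eqref{eq:vmass} then follows from \eqref{eq:mass} by passing to the limit under uniform convergence on the bounded set $\Omega$.

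The boundary condition \eqref{eq:bcv} is the main task. Let $x_0\in\partial\Omega$ satisfy $v(x_0)>0$; the goal is $v'(x_0)=0$. By uniform convergence, $u^{\eps_k}\geq v(x_0)/4>0$ on a one-sided neighborhood $U$ of $x_0$ in $\overline\Omega$ for $k$ large. The same computation as above yields $u^{\eps_k}_{xxx}\to 0$ in $L^2(U)$. Combined with the uniform $H^1(U)$-bound and the one-dimensional interpolation $\|w_{xx}\|_{L^2(U)}\leq C(\|w\|_{L^2(U)}+\|w_{xxx}\|_{L^2(U)})$ on a bounded interval, this gives a uniform $H^3(U)$-bound. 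The Sobolev embedding $H^3(U)\hookrightarrow C^2(\overline U)$ then produces, along a further subsequence, $u^{\eps_k}\to v$ in $C^2(\overline U)$; hence $u^{\eps_k}_x(x_0)\to v'(x_0)$, and the boundary condition $u^{\eps_k}_x(x_0)=0$ from \eqref{eq:bc} forces $v'(x_0)=0$.

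The hard step is this last $C^2$-upgrade up to a boundary point. Everything hinges on the fact that the mobility in \eqref{eq:tfe} degenerates only where $u^\eps$ is small, so on any set where $v$ (and hence $u^{\eps_k}$) is uniformly bounded below, the $L^2$-control on $u^{\eps_k}_{xxx}$ coming from $h^{\eps_k}(t)\to 0$ is non-degenerate and, via interpolation, lifts to a full $H^3$-bound including the boundary trace. This is precisely what allows the Neumann-type boundary condition $u^{\eps_k}_x=0$ to pass to the limit as a pointwise identity for $v'$.
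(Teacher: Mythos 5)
Your proof is correct and follows essentially the same route as the paper: interior lower bounds on $u^{\eps_k}$ plus $h^{\eps_k}(t)\to 0$ give \eqref{eq:parabola}, mass conservation gives \eqref{eq:vmass}, and a lower bound near a boundary point where $v>0$ upgrades the convergence to $C^2$ so that the Neumann condition passes to the limit. The only difference is that you spell out the interpolation/Sobolev argument behind the $C^2$ convergence up to the boundary, which the paper merely asserts.
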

\begin{proof}
We fix $t\in\P$ and
we recall that $H^1(\Omega) \subset C^{1/2}(\Omega)$ and so the functions $x\mapsto u^{\eps_k} (t,x)$ are equi-continuous with respect to $k$.
Let now $v(x)$ be such that
$$ u^{\eps_k'}(t,x)  \to v(x) \mbox{ uniformly in $x$ and weakly in $H^1(\Omega)$}.$$
The conservation of mass \eqref{eq:mass} immediately implies \eqref{eq:vmass}.
In order to prove \eqref{eq:parabola}, we first note that 
if $x_0\in \{v>0\}$ then there exists $\delta>0$ and $r$ such that
$$ u^{\eps_k'}(t,\cdot) \geq \delta \mbox{ in } B_r(x_0) \mbox{ and for all $k$ large enough}$$
Using \eqref{eq:limh}, we deduce
$$ 
\delta^3 \int_{B_r(x_0)} |u^{\eps_k'}_{xxx}|^2\, dx \leq  h^{\eps_k'}(t) \to 0$$
and so 
$$ v''' = 0 \mbox{ in } B_r(x_0).$$
Since this holds for all $x_0\in\{v>0\}$, the result follows.
It only remains to show \eqref{eq:bcv}, which follows from the fact that $u^\eps_x=0$ on $\pa\Omega$ and that if $v(x_0)>0$ for $x_0\in\pa \Omega$, then $u^{\eps_k'}_x$ converges in $C^2$ to $v$ in a neighborhood of $x_0$.
\end{proof}

\medskip

To prove that $\pa_t\rho$ is non-negative, we will naturally try to pass to the limit in the equation for $\pa_t\rho^\eps$ \eqref{eq:tfed}.
For that, we will show that $R^\eps$ goes to zero (in an appropriate sense) while the contribution of $T^\eps$ is always non-negative. 

\paragraph{The term $\pa_x R^\eps$ goes to zero}
We have the following lemma:
\begin{lemma}\label{lem:pReps}
For any smooth test function $\vphi\in \mathcal{D}(\overline{\Omega})$, we have
$$ \langle \pa_x R^{\eps_k}(t,\cdot), \vphi\rangle_{\mathcal D',\mathcal D} \to 0 \qquad \mbox{ for all $t\in\P$ and in $L^2(0,T)$}$$
\end{lemma}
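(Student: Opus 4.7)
The plan is to exploit the pointwise bound of Lemma~\ref{lem:Reps00} together with the subsequence construction that produced \eqref{eq:limh}. The argument should be short since all of the real work was already done in Lemma~\ref{lem:Reps00}; the only new observation needed is a clean integration by parts that accommodates test functions not vanishing on $\pa\Omega$.

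First I would show that $R^\eps$ vanishes on $\pa\Omega$, which lets us move the $\pa_x$ onto $\vphi$ without any boundary term. Rewriting
$$R^\eps = -|\ln\eps|\, B^{\eps\prime}(u^\eps)\,(\eps^{3-n}{u^\eps}^{n} + {u^\eps}^3)\, u^\eps_{xxx},$$
the boundary condition \eqref{eq:bc}, namely $(\eps^{3-n}{u^\eps}^{n} + {u^\eps}^3)u^\eps_{xxx}=0$ on $\pa\Omega$, immediately gives $R^\eps|_{\pa\Omega} = 0$. Consequently, for every $\vphi\in\mathcal{D}(\overline{\Omega})$,
$$\langle \pa_x R^\eps(t,\cdot),\vphi\rangle_{\mathcal D',\mathcal D} = -\int_\Omega R^\eps(t,x)\,\vphi'(x)\,dx.$$

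Second, I would apply Lemma~\ref{lem:Reps00}, which in terms of the function
$$h^\eps(t) = \int_{\{u^\eps>0\}}(\eps^{3-n}{u^\eps}^n + {u^\eps}^3)|u^\eps_{xxx}|^2\,dx$$
introduced right before \eqref{eq:limh}, reads
$$\|R^\eps(t,\cdot)\|_{L^1(\Omega)}\leq C\, h^\eps(t)^{1/2}.$$
Combining the two displays,
$$\bigl|\langle \pa_x R^\eps(t,\cdot),\vphi\rangle_{\mathcal D',\mathcal D}\bigr| \leq C\,\|\vphi'\|_{L^\infty(\Omega)}\, h^\eps(t)^{1/2}.$$

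Third, the two conclusions follow directly. For every $t\in\P$, pointwise convergence of $\langle \pa_x R^{\eps_k}(t,\cdot),\vphi\rangle$ to $0$ is exactly \eqref{eq:limh}. For convergence in $L^2(0,T)$, I would integrate in time and use the energy inequality \eqref{eq:energy} to get
$$\int_0^T h^{\eps_k}(t)\,dt \leq \frac{1}{|\ln\eps_k|}\int_\Omega \tfrac{1}{2}|\pa_x u_{in}|^2\,dx \longrightarrow 0,$$
so that
$$\int_0^T \bigl|\langle \pa_x R^{\eps_k}(t,\cdot),\vphi\rangle\bigr|^2\,dt \leq C\,\|\vphi'\|_{L^\infty(\Omega)}^2 \int_0^T h^{\eps_k}(t)\,dt \longrightarrow 0.$$
There is no real obstacle here; the only subtlety is the boundary verification $R^\eps|_{\pa\Omega}=0$, which is needed precisely because the test functions are allowed in $\mathcal{D}(\overline{\Omega})$ rather than $\mathcal{D}(\Omega)$.
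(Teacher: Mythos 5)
Your proof is correct and follows essentially the same route as the paper's: bound the pairing by $\|\vphi'\|_{L^\infty}\|R^{\eps_k}(t,\cdot)\|_{L^1(\Omega)}\leq C\|\vphi'\|_{L^\infty}\,h^{\eps_k}(t)^{1/2}$ via Lemma~\ref{lem:Reps00}, then use \eqref{eq:limh} for the pointwise convergence on $\P$ and the $L^1(0,T)$ decay of $h^{\eps_k}$ (from the energy inequality) for the $L^2(0,T)$ convergence. Your explicit verification that $R^{\eps}$ vanishes on $\pa\Omega$ via the null-flux condition \eqref{eq:bc} is a worthwhile refinement, since it justifies the boundary-term-free integration by parts that the paper performs implicitly for test functions in $\mathcal D(\overline\Omega)$.
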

\begin{proof}[Proof of Lemma \ref{lem:pReps}]
Using \eqref{eq:Rebd} we find
\begin{align*}
| \langle \pa_x R^{\eps_k}(t,\cdot), \vphi\rangle |
& \leq \| \vphi_x\|_{L^\infty} \int_\Omega |R^{\eps_k}(t,x)| \, dx \\
& \leq  \| \vphi_x\|_{L^\infty} C \left( \int_\Omega   (\eps^{3-n} {u^\eps}^n + {u^\eps}^3) |u^\eps_{xxx}|^2\, dx\right)^{1/2}\\
& \leq C \| \vphi_x\|_{L^\infty} h^{\eps_k}(t).
\end{align*}
To conclude, we simply note that the energy inequality \eqref{eq:energy} implies that $h^{\eps_k}$ converges to zero in $L^2(0,\infty)$, while the construction of the subsequence $\eps_k$ guarantees that $h^{\eps_k}(t)$ converges to zero for all $t\in\P$ (see \eqref{eq:limh}).
\end{proof}

\paragraph{The distribution $T^\eps$.}
To conclude the proof of Theorem \ref{thm:rho}, we will now show the following result:
\begin{proposition}\label{prop:T}
For any positive smooth test functions $\phi\in \mathcal{D}(\overline{\Omega})$ and $\eta\in \mathcal{D}([0,T])$, we have
\begin{equation*}
\liminf_{k \to \infty} \int \langle T^{\eps_k}(t) , \phi^4\rangle_{\mathcal D',\mathcal D} \, \eta(t)\, dt \geq 0.
\end{equation*}
\end{proposition}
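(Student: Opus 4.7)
I aim to establish the stronger pointwise-in-time lower bound
\begin{equation*}
\liminf_{k \to \infty} \langle T^{\eps_k}(t), \phi^4 \rangle \;\geq\; \tfrac{1}{3} \int_{\pa\{v(t) > 0\}} |v_x(t)|^3 \phi^4 \, d\mathcal H^0 \;\geq\; 0
\end{equation*}
for almost every $t \in \P$, where $v(t)$ is an accumulation point (in $C^0(\Omega)\cap H^1$-weak) of the sequence $\{u^{\eps_k}(t)\}$, produced by Proposition~\ref{prop:uacc} and satisfying $v'''=0$ on $\{v>0\}$. Multiplying by $\eta(t)\geq 0$ and applying Fatou's lemma in time then yields the result.

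\textbf{Decomposition and the terms $S^\eps$, $B^\eps$, $C^\eps$.}
Using identity \eqref{eq:vphiT} with $\vphi = \phi^4$, I would split
\begin{equation*}
\langle T^\eps, \phi^4 \rangle = S^\eps - \tfrac{10}{3} A^\eps - 6 B^\eps - 2 C^\eps,
\end{equation*}
where $S^\eps=\int u^\eps(u^\eps_{xx})^2 \phi^4$, $A^\eps=\int (u^\eps_x)^3 \phi^3\phi'$, $B^\eps=\int u^\eps(u^\eps_x)^2\phi^2(\phi')^2$ and $C^\eps=\int u^\eps(u^\eps_x)^2\phi^3\phi''$. On any compact $K\Subset \{v>0\}$, the inequality $u^{\eps_k}\geq\delta>0$ for $k$ large combined with the entropy bound $\int u^\eps(u^\eps_{xx})^2\leq C$ yields $u^{\eps_k}$ bounded in $H^2(K)$, so that $u^{\eps_k}_x\to v_x$ uniformly on $K$ and $u^{\eps_k}_{xx}\rightharpoonup v_{xx}$ weakly in $L^2(K)$. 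Weak lower semi-continuity of the convex functional $\xi\mapsto u^\eps\xi^2\phi^4$ then gives $\liminf_k S^{\eps_k}\geq \int_{\{v>0\}} vv_{xx}^2\phi^4$. For $B^\eps$ and $C^\eps$, the product $u^\eps(u^\eps_x)^2$ converges pointwise almost everywhere to $vv_x^2$: uniformly on compact subsets of $\{v>0\}$ by the preceding bootstrap, and uniformly to $0$ on $\{v=0\}^\circ$ since $u^\eps\to 0$ there while $\|u^\eps_x\|_\infty$ is controlled by Proposition~\ref{prop:lip}. Dominated convergence then delivers $\lim_k B^{\eps_k}=\int vv_x^2\phi^2(\phi')^2$ and $\lim_k C^{\eps_k}=\int vv_x^2\phi^3\phi''$.

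\textbf{The cubic term $A^\eps$ and reverse IBP.}
For $A^\eps$, a single integration by parts (using $u^\eps_x|_{\pa\Omega}=0$) gives $A^\eps = -\tfrac{3}{4}\int (u^\eps_x)^2 u^\eps_{xx}\phi^4$. Splitting the domain as $\{v>\delta\}\cup\{v\leq\delta\}$: on $\{v>\delta\}$, the $H^2_{\mathrm{loc}}$-bootstrap delivers strong uniform convergence of $(u^\eps_x)^2$ together with weak $L^2$ convergence of $u^\eps_{xx}$, identifying the limit as $-\tfrac34\int_{\{v>\delta\}} v_x^2 v_{xx}\phi^4$; on the transition layer $\{v\leq\delta\}$, a combination of the Lipschitz bound of Proposition~\ref{prop:lip} and the weighted entropy $\int u^\eps(u^\eps_{xx})^2 \leq C$ is used to control the remainder, so that sending $\eps\to 0$ then $\delta\to 0$ yields $\lim_k A^{\eps_k}=\int_{\{v>0\}} v_x^3\phi^3\phi'$. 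Combining everything, for almost every $t$,
\begin{equation*}
\liminf_k\langle T^{\eps_k}(t),\phi^4\rangle \;\geq\; \int_{\{v>0\}} vv_{xx}^2\phi^4 - \tfrac{10}{3}\int v_x^3\phi^3\phi' - 6\int vv_x^2\phi^2(\phi')^2 - 2\int vv_x^2\phi^3\phi''.
\end{equation*}
Because $v'''=0$ on each component $(a_i,b_i)$ of $\{v>0\}$ and $v(a_i)=v(b_i)=0$, a reverse integration by parts on each interval (retaining the boundary contributions of $\tfrac13|v_x|^3\phi^4$) transforms the right-hand side into $\tfrac{1}{3}\int_{\pa\{v>0\}}|v_x|^3\phi^4\,d\mathcal H^0 \geq 0$, and Fatou's lemma in time concludes the proof.

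\textbf{Main obstacle.}
The principal technical difficulty is the final step of the analysis of $A^\eps$ in the transition layer $\{v\leq\delta\}$, where neither $u^\eps_x$ is known to converge pointwise nor $u^\eps_{xx}$ is controlled uniformly---only through the degenerate entropy $\int u^\eps(u^\eps_{xx})^2$. This region is precisely where Tanner's law emerges from the matching between the bulk parabolic profile and the inner contact-line profile, and handling it cleanly is the crux of the argument.
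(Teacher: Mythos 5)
Your overall architecture matches the paper's (a pointwise-in-time lower bound for $\langle T^{\eps_k}(t),\phi^4\rangle$, followed by Fatou in time; the same three-term decomposition of \eqref{eq:vphiT}; lower semicontinuity for $\int u^\eps (u^\eps_{xx})^2\phi^4$; and the final reverse integration by parts using $v'''=0$ to produce $\tfrac13\int_{\pa\{v>0\}}|v'|^3\phi^4\,d\mathcal H^0$). However, there are two genuine gaps, and they occur precisely at the points your own closing paragraph flags as unresolved. First, to apply Fatou's lemma in time you need a uniform-in-$k$, integrable lower bound on $t\mapsto\langle T^{\eps_k}(t),\phi^4\rangle$, which you never establish. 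The paper's Lemma~\ref{lem:matias} supplies exactly this: $\langle T^\eps(t),\phi^4\rangle\geq\frac14\int u^\eps(u^\eps_{xx})^2\phi^4\,dx-C$ with $C$ depending only on $\phi$ and the initial energy, so the family is bounded below by $-C$. Second, and more seriously, your treatment of the cubic term $A^\eps$ in the transition layer rests on bounds that are not available at a fixed time $t$: the entropy bound \eqref{eq:entropydiss} controls only the space--\emph{time} integral of $u^\eps(u^\eps_{xx})^2$, and Proposition~\ref{prop:lip} controls $\|u^\eps_x\|_{L^\infty(\Omega)}$ only in $L^4$ of time, so neither gives a uniform-in-$\eps$ bound at a given $t\in\P$. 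After your integration by parts the term $\int_{\{v\le\delta\}}(u^\eps_x)^2u^\eps_{xx}\phi^4$ cannot be closed with these tools (Cauchy--Schwarz against the weighted quantity $\int u^\eps(u^\eps_{xx})^2$ produces a factor $\int (u^\eps_x)^4 (u^\eps)^{-1}$ that degenerates at the contact line).

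The paper resolves both issues with a single dichotomy that is absent from your proposal: either $\liminf_k\langle T^{\eps_k}(t),\phi^4\rangle=+\infty$ (and there is nothing to prove), or it is finite, in which case Lemma~\ref{lem:matias} converts this finiteness into the \emph{pointwise-in-time} bound $\int u^{\eps_k}(t)(u^{\eps_k}_{xx}(t))^2\phi^4\,dx\le C$ along the minimizing subsequence. That bound, fed into Lemma~\ref{lem:uv} (via the interpolation inequality \eqref{eq:v4} and the identity $\int|u^\eps_x\phi|^2=-\int\sqrt{u^\eps}\,g^\eps-\int u^\eps u^\eps_x2\phi\phi'$, which upgrades weak to strong $L^2$ convergence and then to $L^p$, $p<4$), yields strong $L^3$ convergence of $u^{\eps_k}_x\phi$ and hence handles the cubic term directly, with no transition-layer analysis and no integration by parts on $(u^\eps_x)^3$. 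You would need to supply both of these ingredients (or equivalents) for your argument to go through; as written, the "crux" you identify is exactly the part that is missing.
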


The proof of Proposition \ref{prop:T} will occupy the rest of this section. The strategy to show Proposition \ref{prop:T} is to study the pointwise in time limit of $T^{\eps_k}(t)$. From now on, we further restrict the set of times $\mathcal{P}$, so that the representation for $T^{\eps_k}(t)$ \eqref{eq:vphiT} also holds for every $k$ and every $t\in\mathcal{P}$. We can then  prove:
\begin{proposition}\label{prop:T2}
For any positive test function $\phi\in \mathcal{D}(\overline{\Omega})$ we have
$$ \liminf_{k\to \infty} \langle T^{\eps_k}(t) , \phi^4\rangle_{\mathcal D',\mathcal D}  \geq 0 \quad \mbox{ for 
 all $t\in \P$. }$$
\end{proposition}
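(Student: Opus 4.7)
My plan is to extract a perfect-square decomposition of $\langle T^\eps(t),\phi^4\rangle$ from the representation \eqref{eq:vphiT} and to identify the three resulting integrals in the limit $\eps_k\to 0$ using the local $C^2$ regularity of any cluster point $v$ of $u^{\eps_k}(t)$ on the interior of $\{v>0\}$.

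Starting from \eqref{eq:vphiT} with $\vphi=\phi^4$, I expand $(\phi^4)''=12\phi^2(\phi')^2+4\phi^3\phi''$ and integrate the $\phi''$-piece by parts once (all boundary terms vanish thanks to $u^\eps_x=0$ on $\pa\Omega$). The algebraic identity $((u^\eps_x)^3)_x=3(u^\eps_x)^2u^\eps_{xx}$ and completion of the square in $u^\eps_{xx}$ then yield the key decomposition
$$\langle T^\eps(t),\phi^4\rangle=\int_\Omega u^\eps\bigl(u^\eps_{xx}\phi^2+2u^\eps_x\phi\phi'\bigr)^2dx-4\int_\Omega u^\eps(u^\eps_x)^2\phi^2(\phi')^2\,dx-\tfrac{4}{3}\int_\Omega(u^\eps_x)^3\phi^3\phi'\,dx.$$
The first integral is a perfect square, hence non-negative for every $\eps>0$, and the problem reduces to estimating the liminf of the remaining two terms.

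After extracting a subsequence so that $u^{\eps_k}(t)\to v$ uniformly and weakly in $H^1(\Omega)$ (Proposition~\ref{prop:uacc}), I localize. Fix $\delta>0$ and let $K_\delta:=\{v\geq\delta\}$. On $K_\delta$, $u^{\eps_k}\geq\delta/2$ for $k$ large and $h^{\eps_k}(t)\to 0$ (from \eqref{eq:limh}) forces $u^{\eps_k}_{xxx}\to 0$ strongly in $L^2(K_\delta)$; bootstrapped against the uniform $H^1$ bound this improves to $u^{\eps_k}\to v$ in $C^2(K_\delta)$, so the direct formula $T^\eps=-u^\eps u^\eps_x u^\eps_{xxx}$ gives $\int_{K_\delta}T^{\eps_k}\phi^4\,dx\to 0$. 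On each connected component $(a,b)\subset\{v>0\}$, four integrations by parts in the trivially zero integral $-\int_a^b vv'v'''\phi^4\,dx=0$ (using $v'''=0$) produce the algebraic identity
$$\tfrac{1}{3}\bigl(|v'(a)|^3\phi(a)^4+|v'(b)|^3\phi(b)^4\bigr)=\int_a^b v(v'')^2\phi^4+4\int_a^b vv'v''\phi^3\phi'-\tfrac{4}{3}\int_a^b (v')^3\phi^3\phi',$$
which, combined with the $C^2$ convergence on $K_\delta$, lets me identify the $K_\delta$-part of the key decomposition in the limit with the non-negative Tanner boundary sum $\tfrac{1}{3}\sum_{x_0\in\pa K_\delta\cap (a,b)}|v'(x_0)|^3\phi(x_0)^4$, which in turn converges as $\delta\to 0$ to $\tfrac{1}{3}\sum_{x_0\in\pa\{v>0\}}|v'(x_0)|^3\phi(x_0)^4\geq 0$.

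The main obstacle lies in the complementary region $\Omega\setminus K_\delta$, which consists of a thin parabolic layer of Lebesgue measure $O(\delta)$ near $\pa\{v>0\}$ together with the interior of $\{v=0\}$. On the parabolic layer the uniform Lipschitz bound (Proposition~\ref{prop:lip}) allows me to control the two negative integrals by $C_\phi\|u^\eps_x\|_{L^\infty}^3|\Omega\setminus K_\delta|=O(\delta)$. The genuinely delicate step is the interior of $\{v=0\}$: there $u^{\eps_k}\to 0$ uniformly, yet $u^{\eps_k}_x$ is only weakly compact and could a priori oscillate. To kill the $-\tfrac{4}{3}\int(u^\eps_x)^3\phi^3\phi'$ and $-4\int u^\eps(u^\eps_x)^2\phi^2(\phi')^2$ contributions there, I plan to combine the uniform smallness $\|u^{\eps_k}\|_{L^\infty}\to 0$ with the entropy bound $\int u^\eps(u^\eps_{xx})^2\leq C$ from \eqref{eq:entropydiss}, which lower-bounds the wavelength of any oscillation of $u^{\eps_k}_x$ and forces $(u^{\eps_k}_x)^3\to 0$ in the integrated sense required. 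Sending first $k\to\infty$ and then $\delta\to 0$ then yields
$$\liminf_{k\to\infty}\langle T^{\eps_k}(t),\phi^4\rangle\geq\tfrac{1}{3}\sum_{x_0\in\pa\{v>0\}}|v'(x_0)|^3\phi(x_0)^4\geq 0,$$
as claimed.
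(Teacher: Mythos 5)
Your perfect-square decomposition of $\langle T^\eps(t),\phi^4\rangle$ is algebraically correct and the overall architecture (isolate a manifestly non-negative piece, identify the rest in the limit as the Tanner boundary sum $\frac13\sum_{\pa\{v>0\}}|v'|^3\phi^4$) is in the same spirit as the paper's proof. However, there is a genuine gap: every estimate you invoke away from $K_\delta$ rests on bounds that are only available \emph{integrated in time}, whereas the statement is pointwise in $t$. Concretely, \eqref{eq:entropydiss} controls $\int_0^\infty\!\int_\Omega u^\eps(u^\eps_{xx})^2\,dx\,dt$ and Proposition~\ref{prop:lip} controls $\|u^\eps_x\|_{L^4((0,\infty);L^\infty)}$; neither gives a bound on $\int_\Omega u^{\eps_k}(t)(u^{\eps_k}_{xx}(t))^2\phi^4\,dx$ nor on $\|u^{\eps_k}_x(t)\|_{L^\infty}$ that is uniform in $k$ at a \emph{fixed} $t\in\P$. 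Without the fixed-time bound on $\int u(u_{xx})^2\phi^4$ you cannot justify strong $L^3$ convergence of $u^{\eps_k}_x\phi$ (which is what actually kills the cubic term on the thin layer and on the interior of $\{v=0\}$), and the ``wavelength of oscillation'' heuristic on $\{v=0\}$ is not a proof of anything.

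The paper closes precisely this gap with Lemma~\ref{lem:matias}: one first disposes of the case $\liminf_k\langle T^{\eps_k}(t),\phi^4\rangle=+\infty$, passes to a subsequence realizing a finite liminf, and then the lower bound $\langle T^\eps(t),\phi^4\rangle\geq\frac14\int_\Omega u^\eps(u^\eps_{xx})^2\phi^4\,dx-C$ converts the finiteness of the liminf into the needed fixed-time bound $\int_\Omega u^{\eps_k}(t)(u^{\eps_k}_{xx}(t))^2\phi^4\,dx\leq C$. That bound then yields (Lemma~\ref{lem:uv}) the strong $L^p$, $p<4$, convergence of $u^{\eps_k}_x\phi$ and the weak $L^2$ compactness of $\sqrt{u^{\eps_k}}u^{\eps_k}_{xx}\phi^2$, after which lower semicontinuity and $v'''=0$ in $\{v>0\}$ produce the boundary term. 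Your decomposition could be completed along the same lines, but to get the required lower bound at fixed time you would still have to control $\int(u^\eps_x)^3\phi^3\phi'\,dx$ by $\frac12\int u^\eps(u^\eps_{xx})^2\phi^4\,dx+C$ via the interpolation inequality \eqref{eq:v4} --- that is, you would be re-proving Lemma~\ref{lem:matias}, which is exactly the step your proposal omits.
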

\medskip

%Next, we recall that the energy inequality implies
%$$ \int_0^T \int_\Omega   (\eps^2 u^\eps +{u^\eps}^3) |u^\eps _{xxx}  |^2\, dx\, dt  \leq \frac{C}{|\ln\eps|} \to 0.$$
%In particular the sequence of functions $h^\eps(t)= \int_\Omega (\eps^2 u^\eps +{u^\eps}^3) |u^\eps _{xxx}  |^2\, dx$ (and any subsequence) converges to $0$ strongly in $L^1(0,T)$, and therefore 
%from the  sequence $\eps_k$, we can extract a subsequence, still denoted by $\eps_k$, such that
%\begin{equation}\label{eq:aet}
%h^{\eps_k}(t)= \int_\Omega   (\eps_k^2 u^{\eps_k} +{u^{\eps_k}}^3) |u^{\eps_k} _{xxx}  |^2\, dx \to 0 \mbox{ a.e. } t\in [0,T].
%\end{equation}
%We  denote by $\P$ the subset of $[0,T]$ (of full measure) over which \eqref{eq:aet} holds.

A key result in the proof of this proposition is the following:
\begin{lemma}\label{lem:matias}
For all smooth positive test functions $\phi\in \mathcal{D}(\overline{\Omega})$, there exists a constant $C$ depending only on $\phi$ and $\int \left|\pa_x u_{0}(x)\right|^2\, dx$ such that
\begin{equation}\label{eq:matias}
\langle T^\eps(t), \phi^4 \rangle_{\mathcal D',\mathcal D} \geq \frac 1 4 \int_\Omega u^\eps(t) |u^\eps_{xx}(t)|^2 \phi^4 (x) \, dx - C
\end{equation}
for all $t\geq 0$ and for all $\eps>0$ such that \eqref{eq:vphiT} holds.
\end{lemma}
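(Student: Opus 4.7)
My plan is to use the explicit representation \eqref{eq:vphiT} of $T^\eps$ with test function $\varphi = \phi^4$ and then absorb all ``bad'' terms into a small multiple of the positive principal term $\int u^\eps(u^\eps_{xx})^2 \phi^4\,dx$ plus a constant controlled by the energy. Substituting $\varphi = \phi^4$ and using $(\phi^4)' = 4\phi^3\phi'$, $(\phi^4)'' = 12\phi^2(\phi')^2 + 4\phi^3\phi''$, identity \eqref{eq:vphiT} yields
\[
\langle T^\eps(t),\phi^4\rangle
= \int_\Omega u^\eps(u^\eps_{xx})^2 \phi^4\,dx
- \tfrac{10}{3}\int_\Omega (u^\eps_x)^3 \phi^3\phi'\,dx
- \int_\Omega u^\eps(u^\eps_x)^2\bigl[6\phi^2(\phi')^2 + 2\phi^3\phi''\bigr]\,dx.
\]
The last integral is immediately bounded in absolute value by $C(\phi)\|u^\eps\|_{L^\infty}\|u^\eps_x(t,\cdot)\|_{L^2}^2 \le C$ thanks to the $L^\infty$ bound \eqref{eq:uLinfty} and the energy inequality \eqref{eq:energy}, both of which are controlled by $\int|\partial_x u_0|^2\,dx$.

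The real obstacle is the cubic term $\int (u^\eps_x)^3\phi^3\phi'\,dx$: at fixed $t$ there is no uniform control of $u^\eps_x$ beyond $L^2$, so it must be reshaped by integration by parts. My approach is to write $(u^\eps_x)^3 = u^\eps_x\cdot (u^\eps_x)^2$ and move the first factor onto $u^\eps$; the boundary contribution $[u^\eps(u^\eps_x)^2\phi^3\phi']_{\partial\Omega}$ vanishes thanks to $u^\eps_x|_{\partial\Omega}=0$ from \eqref{eq:bc}, producing
\[
\int_\Omega (u^\eps_x)^3\phi^3\phi'\,dx
= -2\int_\Omega u^\eps\, u^\eps_x\, u^\eps_{xx}\,\phi^3\phi'\,dx
- \int_\Omega u^\eps (u^\eps_x)^2\bigl[3\phi^2(\phi')^2 + \phi^3\phi''\bigr]\,dx,
\]
where the second integral is again controlled by $C$ as before.

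For the crucial cross term I will use Young's inequality via the pointwise factorization
\[
u^\eps |u^\eps_x|\, |u^\eps_{xx}| \phi^3|\phi'|
= \bigl(\sqrt{u^\eps}\,|u^\eps_{xx}|\,\phi^2\bigr)\cdot \bigl(\sqrt{u^\eps}\,|u^\eps_x|\,\phi|\phi'|\bigr)
\le \delta\, u^\eps (u^\eps_{xx})^2\phi^4 + \tfrac{1}{4\delta}\,u^\eps(u^\eps_x)^2\phi^2(\phi')^2.
\]
Choosing $\delta$ so that the resulting coefficient of $\int u^\eps(u^\eps_{xx})^2\phi^4\,dx$ equals $\tfrac{3}{4}$ and bounding the leftover $\int u^\eps(u^\eps_x)^2\phi^2(\phi')^2\,dx$ by a constant via the energy estimate, I obtain $\bigl|\tfrac{10}{3}\int (u^\eps_x)^3\phi^3\phi'\,dx\bigr| \le \tfrac{3}{4}\int u^\eps(u^\eps_{xx})^2\phi^4\,dx + C$. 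Combining with the bound on the $(u^\eps_x)^2$-terms gives $\langle T^\eps,\phi^4\rangle \ge \tfrac{1}{4}\int u^\eps(u^\eps_{xx})^2\phi^4\,dx - C$, which is exactly \eqref{eq:matias}.

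The hard part is the cubic term: no direct pointwise bound succeeds, and the argument hinges on two points. First, the integration by parts must move exactly one copy of $u^\eps_x$ onto $u^\eps$ so as to create the $\sqrt{u^\eps}$ weight compatible with the principal term; any alternative (e.g.\ integrating towards $(u^\eps_x)^4$) would leave an expression that cannot be absorbed. Second, the choice of $\phi^4$ rather than $\phi$ as the test function is engineered so that after differentiation enough powers of $\phi$ remain to split symmetrically as $\phi^2\cdot \phi$ in the Young step; with a less ``squared'' test function this delicate book-keeping would fail.
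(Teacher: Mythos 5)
Your proof is correct, and it follows the same overall strategy as the paper (start from the representation \eqref{eq:vphiT} with $\vphi=\phi^4$, identify the cubic term $\int (u_x^\eps)^3\phi^3\phi'$ as the only dangerous one, and absorb it into the principal term by Young's inequality plus the energy bound), but you handle the cubic term by a genuinely different route. The paper first proves the interpolation inequality \eqref{eq:v4}, namely $\int |u^\eps_x|^4\phi^4\,dx \le C\|u^\eps\|_{L^\infty}\int u^\eps (u^\eps_{xx})^2\phi^4\,dx + C$, then controls the cubic term through H\"older, $\bigl|\int (u_x^\eps)^3\phi^3\phi'\bigr|\le \bigl(\int (u_x^\eps)^4\phi^4\bigr)^{3/4}\bigl(\int(\phi')^4\bigr)^{1/4}$, and Young. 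You instead integrate by parts once more, writing $(u_x^\eps)^3=u_x^\eps\cdot(u_x^\eps)^2$ and moving the derivative onto $u^\eps$ to produce $-2\int u^\eps u^\eps_x u^\eps_{xx}\phi^3\phi'$ plus harmless terms, and then apply Cauchy--Schwarz/Young directly to the factorization $\bigl(\sqrt{u^\eps}\,|u^\eps_{xx}|\phi^2\bigr)\bigl(\sqrt{u^\eps}\,|u^\eps_x|\phi|\phi'|\bigr)$; both integrations by parts use the same boundary information ($u^\eps_x=0$ on $\pa\Omega$) and the same regularity, so neither is more delicate than the other. Your route is slightly more economical for this lemma since it avoids the auxiliary $L^4$ estimate, though the paper's inequality \eqref{eq:v4} is reused later (e.g.\ at the end of the proof of Lemma \ref{lem:uv} to upgrade $L^2$ to $L^p$, $p<4$, convergence of $u^{\eps_k}_x\phi$), so it is not wasted effort there. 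Your bookkeeping of constants is also cleaner: with your choice of $\delta$ the absorbed coefficient is exactly $3/4$, yielding the stated $1/4$, whereas the paper's displayed choice $\lambda^{4/3}=\tfrac{1}{2C\|u^\eps\|_{L^\infty}}$ produces a factor $\tfrac{10}{3}\cdot\tfrac12=\tfrac53$ in front of the principal term and needs the obvious adjustment (take $\lambda$ smaller); this is immaterial but worth noting.
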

This lemma plays a crucial role in the proof of Proposition \ref{prop:T2}. It also shows that the function $t\mapsto \langle T^{\eps_k}(t), \phi^4 \rangle $ are bounded below uniformly (by $-C$).  We can thus use Fatou's lemma to show that Proposition \ref{prop:T2} implies Proposition \ref{prop:T}.
%to show that
%$$ \liminf_{k\to \infty} \int \langle T^{\eps_k}(t) , \phi^4\rangle \eta(t)\, dt \geq 0.$$
%which in view of \eqref{eq:limliminf} implies Theorem \ref{thm:T}.

\medskip

The second important lemma for the proof of Proposition \ref{prop:T2} is the following:
\begin{lemma}\label{lem:uv}
For all $t\in[0,T]$, for all sequence $\eps_k\to 0$, and for all positive test function $\phi\in\mathcal{D}(\overline{\Omega})$, 
if there exists $v(x)$ such that
$$ u^{\eps_k}(t) \to v\qquad \mbox{ uniformly in $\Omega$}
$$
and 
\begin{equation}\label{eq:bd3dd}
 \int_\Omega u^{\eps_k}(t) (u^{\eps_k}_{xx}(t))^2\phi^4 \, dx \leq C
 \end{equation}
for some constant $C$, then
$$  u^{\eps_k}_x (t,x)\phi(x) \to v'(x) \phi(x) \qquad \mbox{ strongly  in $L^p(\Omega)$ for all $p<4$} .$$
\end{lemma}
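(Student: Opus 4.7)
Denote $w_k := u^{\eps_k}(t,\cdot)$. Under the hypotheses, $w_k\to v$ uniformly on $\Omega$ and $\int_\Omega w_k(w_{k,xx})^2\phi^4\,dx \le C$. The plan is to obtain a uniform $L^4$-bound on $w_{k,x}\phi$, localize it to show that the $L^4$-mass on $\{v\le\delta\}$ vanishes as $\delta\to 0$ (uniformly in $k$), and combine this with $H^1$-compactness on regions where both $v$ and $\phi$ are bounded below. For the uniform bound, since $w_{k,x}|_{\pa\Omega}=0$, integration by parts gives
\[
X := \int_\Omega (w_{k,x})^4 \phi^4\,dx = -3\int_\Omega w_k(w_{k,x})^2 w_{k,xx}\phi^4\,dx - 4\int_\Omega w_k(w_{k,x})^3\phi^3\phi'\,dx.
\]
Splitting $w_k w_{k,xx}=w_k^{1/2}\cdot w_k^{1/2}w_{k,xx}$, Cauchy--Schwarz bounds the first integral by $3C^{1/2}\|w_k\|_\infty^{1/2}X^{1/2}$ and Hölder bounds the second by $C\|w_k\|_\infty|\Omega|^{1/4}X^{3/4}$; Young's inequality then yields $X\le C'\|w_k\|_\infty$, uniformly in $k$, and weak lower semicontinuity of the $L^4$-norm gives $v'\phi\in L^4(\Omega)$.

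To localize near $\{v=0\}$, I pick a smooth cutoff $F_\delta:[0,\infty)\to[0,1]$ with $F_\delta\equiv 1$ on $[0,\delta]$, $F_\delta\equiv 0$ on $[2\delta,\infty)$, $|F_\delta'|\le C/\delta$, and set $\psi_\delta(x):=F_\delta(v(x))$, so $\supp\psi_\delta\subset\{v\le 2\delta\}$ and $\psi_\delta'=F_\delta'(v)v'$ is supported in $\{\delta\le v\le 2\delta\}$. Running the same IBP with $\phi$ replaced by $\phi\psi_\delta$ and using $\sup_{\supp\psi_\delta}w_k\le 3\delta$ for $k$ large, the two terms already present are $O(\delta^{1/2})X_\delta^{1/2}+O(\delta)X_\delta^{3/4}$ with $X_\delta:=\int(w_{k,x})^4(\phi\psi_\delta)^4\,dx$; the additional contribution produced by $\psi_\delta'$,
\[
-4\int_\Omega w_k(w_{k,x})^3\phi^4\psi_\delta^3 F_\delta'(v)\,v'\,dx,
\]
has its $1/\delta$ blow-up precisely absorbed by the bound $w_k\le 3\delta$ on $\supp F_\delta'(v)$, leaving the estimate $C\,X_\delta^{3/4}\|v'\phi\|_{L^4(\{\delta\le v\le 2\delta\})}$. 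Since $v'\phi\in L^4(\Omega)$ and the set $\{\delta\le v\le 2\delta\}\to\emptyset$ pointwise a.e., dominated convergence forces the $L^4$-norm to zero, and Young's inequality yields $X_\delta\to 0$ as $\delta\to 0$, uniformly in large $k$. Hence $\int_{\{v\le\delta\}}(w_{k,x})^4\phi^4\,dx\le X_\delta\to 0$; Hölder gives the corresponding $L^p$-smallness for $p<4$, and weak lsc gives it for $v'\phi$.

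For the complementary region, fix $\sigma>0$ and set $U_{\delta,\sigma}:=\{v>\delta\}\cap\{\phi>\sigma\}$: on it $w_k\ge\delta/2$ for $k$ large, so $\int_{U_{\delta,\sigma}}(w_{k,xx})^2\,dx\le 2C/(\delta\sigma^4)$. Together with the energy bound $\|w_{k,x}\|_{L^2(\Omega)}\le C$, this controls $w_{k,x}$ in $H^1(U_{\delta,\sigma})$; Rellich--Kondrachov in one dimension and uniqueness of the weak $L^2$-limit then give $w_{k,x}\to v'$ strongly in $L^2(U_{\delta,\sigma})$. I finally decompose $\Omega$ into $\{v\le\delta\}$, $\{v>\delta\}\cap\{\phi\le\sigma\}$ and $U_{\delta,\sigma}$: on the first the $L^p$-smallness is the content of the localization step; on the second $|(w_{k,x}-v')\phi|\le\sigma|w_{k,x}-v'|$ together with the $L^2$-bound handles it; on the third compactness gives strong convergence. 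Choosing $\delta,\sigma$ small and then $k$ large yields $w_{k,x}\phi\to v'\phi$ in $L^p(\Omega)$.

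The main obstacle I foresee is the third term in the localized IBP: its factor $\psi_\delta'$ carries a $1/\delta$ blow-up from $F_\delta'$, which is compensated only by the $O(\delta)$ smallness of $w_k$ on $\supp\psi_\delta$, and the mechanism that actually drives $X_\delta\to 0$ is the shrinking of $\{\delta\le v\le 2\delta\}$ combined with $v'\phi\in L^4$ (itself provided by the preceding uniform $L^4$-bound). A secondary technical issue is that for $n\in[2,3)$ the function $u^\eps_{xx}$ is not classically a function, so the integration by parts must be justified on the smooth approximations provided by Proposition~\ref{prop:exist} and then passed to the limit.
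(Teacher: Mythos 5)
Your proof is correct in substance but takes a genuinely different route from the paper's. The paper's argument is the classical ``weak convergence plus convergence of norms'' trick: writing
$\int_\Omega |u^{\eps_k}_x\phi|^2\,dx=-\int_\Omega \sqrt{u^{\eps_k}}\,g^{\eps_k}\,dx-2\int_\Omega u^{\eps_k}u^{\eps_k}_x\phi\phi'\,dx$
with $g^{\eps_k}=\sqrt{u^{\eps_k}}\,u^{\eps_k}_{xx}\phi^2$ bounded in $L^2$ by \eqref{eq:bd3dd}, it passes to the limit using the uniform convergence of $u^{\eps_k}$ and the weak $L^2$ convergence of $g^{\eps_k}$, identifies the weak limit with $\sqrt v\,v''$ as in \eqref{eq:gg}, deduces $\|u^{\eps_k}_x\phi\|_{L^2}\to\|v'\phi\|_{L^2}$ and hence strong $L^2$ convergence, and finally upgrades to $L^p$, $p<4$, by exactly the interpolation you use, via the uniform $L^4$ bound \eqref{eq:v4}. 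Your proof replaces the norm-convergence step by a quantitative localization near $\{v=0\}$ with the cutoff $F_\delta(v)$ plus compactness away from it; this is heavier but has the merit of producing an explicit smallness estimate for $\int_{\{v\le\delta\}}|u^{\eps_k}_x|^4\phi^4\,dx$ that does not require identifying the weak limit of $g^{\eps_k}$, and your handling of the $\psi_\delta'$ term (the $1/\delta$ from $F_\delta'$ cancelled by $u^{\eps_k}\le 3\delta$ on $\{\delta\le v\le2\delta\}$, with the remaining factor $\|v'\phi\|_{L^4(\{\delta\le v\le2\delta\})}\to0$ by dominated convergence) is sound. Two points should be tightened, though neither changes the structure: (i) Rellich--Kondrachov need not hold on $U_{\delta,\sigma}=\{v>\delta\}\cap\{\phi>\sigma\}$, which may be an infinite union of shrinking intervals where the embedding $H^1\hookrightarrow L^2$ loses compactness; replace it by a.e.\ convergence on compact subintervals together with the uniform $L^4$ bound (valid on $U_{\delta,\sigma}$ since $\phi\ge\sigma$ there) and Vitali's theorem. (ii) On $\{v>\delta\}\cap\{\phi\le\sigma\}$ the bound $|(u^{\eps_k}_x-v')\phi|\le\sigma|u^{\eps_k}_x-v'|$ with the $L^2$ energy bound only covers $p\le2$; for $2<p<4$ interpolate, $\|u^{\eps_k}_x\phi\|_{L^p(\{\phi\le\sigma\})}\le\|u^{\eps_k}_x\phi\|_{L^2(\{\phi\le\sigma\})}^{\theta}\|u^{\eps_k}_x\phi\|_{L^4(\Omega)}^{1-\theta}\le C\sigma^{\theta}$, using the global $L^4$ bound you already established. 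Also note that, like the paper, you are implicitly invoking the uniform $H^1(\Omega)$ bound on $u^{\eps_k}(t)$ from the energy inequality \eqref{eq:energy}, which is not among the stated hypotheses of the lemma but is available for the solutions at hand.
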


Postponing the proof of this proposition to the end of this section, we now turn to the proof of Proposition \ref{prop:T2}:

\begin{proof}[Proof of Proposition \ref{prop:T2}]
Throughout the proof, we fix a test function $\phi\in\mathcal{D}(\overline{\Omega})$ and a time $t\in\P$.
If 
$ \liminf_{k\to \infty} \langle T^{\eps_k}(t) , \phi^4\rangle = \infty$, then the result is trivially true.
Otherwise,  there exists a subsequence $\eps_k'$ such that
$$\lim_{k\to \infty} \langle T^{\eps_k'}(t) , \phi^4\rangle = \liminf_{k\to \infty} \langle T^{\eps_k}(t) , \phi^4\rangle <\infty.$$
In particular 
$ \langle T^{\eps_k'}(t) , \phi^4\rangle$ is bounded and Lemma \ref{lem:matias} implies that 
\begin{equation}\label{eq:CCF} 
\int_\Omega u^{\eps_k'}(t) (u^{\eps_k'}_{xx}(t))^2\phi^4 \, dx < C
\end{equation}
for some constant $C$.

Since $u^{\eps_k'}(t)$ is also bounded in $H^1(\Omega)$, and thus in $C^{1/2}(\Omega)$, up to another a subsequence (still denoted $\eps_k'$), there exists a function $v\in H^1(\Omega)$ such that
$$ u^{\eps_k'}(t) \to v(x)\quad \mbox{ uniformly and in $H^1(\Omega)$ weak.}
$$
This new subsequence satisfies all the conditions of  Lemma \ref{lem:uv}, and so we have:
\begin{equation}\label{eq:L2L3} 
 u^{\eps_k'}_x \phi \to v'\phi \qquad \mbox{ strongly  in } L^2(\Omega) \mbox{ and } L^3(\Omega) .
 \end{equation}

\medskip

Finally, we write (see \eqref{eq:vphiT}): 
\begin{equation}\label{eq:TTE}
\langle T^\eps(t) , \phi^4 \rangle  =  \int_\Omega u^\eps (u^\eps_{xx})^2 \phi^4\, dx - \frac 5 {6}\int_\Omega {u_x^\eps}^3  \phi ^3 4 \phi' \, dx - \frac 1 2 \int_\Omega u^\eps {u^\eps_x}^2  \phi^2 4 [3{\phi'}^2+ \phi \phi'']\, dx.
\end{equation}
Clearly, \eqref{eq:L2L3} allows us to pass to  the limit in the last two terms. So the main difficulty comes from the first term, for which we only have an inequality. 
Indeed,
\eqref{eq:CCF} implies that the function $g^{\eps_k'} (x) = \sqrt{u^{\eps_k'}} (t)u^{\eps_k'}_{xx} (t)\phi^2$ is bounded in $L^2$ and thus converges (up to another subsequence) to $g(x)$ weakly in $L^2$ and the lower semicontinuity of the $L^2$ norm implies
$$
\liminf_{k\to\infty}
 \int_\Omega u^{\eps_k'} (u^{\eps_k'}_{xx})^2 \phi^4\, dx \geq \int_\Omega g^2\, dx
$$
Furthermore, since
 $v$ is in $C^{1/2}(\Omega)$, using the uniform convergence of $u^{\eps_k'}$ and the bound \eqref{eq:bd3dd}, it is easy to show that for all $\delta>0$ the function
$u^{\eps_k'}_{xx}(t,\cdot)$ converges to  $v''$ weakly in $L^2(\{v>\delta\})$. We  deduce that $g=\sqrt v v''$ in $\{v>0\}$ and it follows that
\begin{equation}
\label{eq:gg}
\liminf_{k\to\infty}
 \int_\Omega u^{\eps_k'} (u^{\eps_k'}_{xx})^2 \phi^4\, dx \geq\int_{\{v>0\}} v (v'')^2\phi^4 \, dx
 \end{equation}
 
 We can now pass to the limit in \eqref{eq:TTE} to get:
\begin{align} 
\lim_{k\to\infty} \langle T^{\eps_k'}(t) , \phi^4 \rangle  
& \geq 
\int_{\{v>0\}} v (v'')^2\phi^4 \, dx - \frac 5 {6}\int_{\{v>0\}} {v'}^3 [\phi ^4]'   - \frac 1 2 \int_{\{v>0\}} v {v'}^2 [\phi^4]''\nonumber \\
& \geq \int_{\{v>0\}} v (v'')^2\phi^4 \, dx - \frac 1 3\int_{\{v>0\}} {v'}^3 [\phi^4]' +  \int_{\{v>0\}} v v' v'' [\phi^4]' .\label{eq:TTFT}
\end{align}
Finally, we recall (see Proposition \ref{prop:uacc}) that
$$ v''' = 0 \mbox{ in } \{v>0\}$$
from which we deduce (after a few straightforward integration by parts in \eqref{eq:TTFT}):
\begin{equation}\label{eq:T}
\liminf_{k\to\infty} \langle T^{\eps_k}(t) , \phi^4 \rangle   \geq \frac 1 3  \int_{\pa\{v>0\}} |v'|^3 \phi^4 d\mathcal H^0(x) \geq 0
\end{equation}
which completes the proof.
\end{proof}

Note that we have in fact proved the following stronger result, which will be useful later on:
\begin{proposition}\label{prop:1}
For a given $t\in\P$,
let $\eps_k'$ be any subsequence of $\eps_k$ such that $u^{\eps_k'}(t)$ converges uniformly and in $H^1$ weak to a function $v(x)$.
Then for positive any test function $\phi\in \mathcal{D}(\overline{\Omega})$, we have
$$ \liminf_{k\to\infty} \langle T^{\eps_k'}(t) , \phi^4 \rangle   \geq
\frac 1 3   \int_{\pa\{v>0\}} |v'(x)| ^3 \phi (x)^4 d \mathcal H^0(x).$$
\end{proposition}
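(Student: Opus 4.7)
The proof is essentially a refinement of the one just carried out for Proposition \ref{prop:T2}: the same chain of arguments is used, but we retain the explicit boundary term that appeared at \eqref{eq:T} rather than discarding it in favor of the weaker $\geq 0$ bound. My plan is as follows. I first extract a further subsequence (still denoted $\eps_k'$) along which $\langle T^{\eps_k'}(t),\phi^4\rangle$ converges to its liminf; if that liminf is $+\infty$ there is nothing to prove, so assume it is finite. Lemma \ref{lem:matias} then gives the uniform bound $\int_\Omega u^{\eps_k'}(u^{\eps_k'}_{xx})^2 \phi^4 \,dx \leq C$, and since by hypothesis $u^{\eps_k'}(t)\to v$ uniformly on $\Omega$, Lemma \ref{lem:uv} yields the strong convergence $u^{\eps_k'}_x\phi \to v'\phi$ in every $L^p(\Omega)$ with $p<4$, in particular in $L^2$ and $L^3$.

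Next I would insert these convergences into the explicit representation \eqref{eq:vphiT} applied to the test function $\phi^4$. The two terms involving $(u^{\eps_k'}_x)^3$ and $u^{\eps_k'}(u^{\eps_k'}_x)^2$ pass to the limit directly from the strong $L^3$ and $L^2$ convergence of $u^{\eps_k'}_x\phi$ combined with the uniform convergence of $u^{\eps_k'}$. For the remaining non-negative term, I would set $g^{\eps_k'} := \sqrt{u^{\eps_k'}}\, u^{\eps_k'}_{xx}\,\phi^2$, which is bounded in $L^2(\Omega)$, and pass to a further subsequence so that $g^{\eps_k'}\rightharpoonup g$ weakly in $L^2$. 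A localization argument on the sets $\{v>\delta\}$, on which $u^{\eps_k'}\geq \delta/2$ for $k$ large, identifies $g = \sqrt{v}\,v''$ on $\{v>0\}$, and lower semicontinuity of the $L^2$ norm gives
$$\liminf_{k\to\infty} \int_\Omega u^{\eps_k'}(u^{\eps_k'}_{xx})^2 \phi^4 \,dx \ \geq \ \int_{\{v>0\}} v\,(v'')^2 \phi^4 \,dx.$$

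This places me at exactly the inequality displayed just before \eqref{eq:TTFT} in the proof of Proposition \ref{prop:T2}. Since $t\in\P$, Proposition \ref{prop:uacc} applies and gives $v'''=0$ on $\{v>0\}$; combined with $v=0$ on $\pa\{v>0\}$, the same integrations by parts carried out in \eqref{eq:TTFT}--\eqref{eq:T} cause the bulk contributions to collapse onto a pure boundary term with coefficient $1/3$, yielding the claimed bound $\liminf_k\langle T^{\eps_k'}(t),\phi^4\rangle \geq \tfrac13 \int_{\pa\{v>0\}}|v'|^3\phi^4\,d\mathcal{H}^0$.

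The main obstacle is precisely the identification $g=\sqrt{v}\,v''$ on $\{v>0\}$: the weight $\sqrt{u^{\eps_k'}}$ degenerates near the free boundary and one must rule out singular contributions to the weak $L^2$ limit concentrated on $\pa\{v>0\}$. The localization on $\{v>\delta\}$ mentioned above handles this (on such a set $u^{\eps_k'}_{xx}$ is itself bounded in $L^2$, so the full weak $L^2$ limit is $v''$ there), with the final identification on $\{v>0\}$ obtained by monotone convergence as $\delta\to 0$.
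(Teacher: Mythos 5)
Your argument is correct and is essentially the paper's own proof: the authors prove Proposition \ref{prop:1} by observing that the proof of Proposition \ref{prop:T2} (extraction of a subsequence realizing the liminf, the lower bound of Lemma \ref{lem:matias}, the strong $L^p$ convergence of $u^{\eps_k'}_x\phi$ from Lemma \ref{lem:uv}, weak lower semicontinuity with the identification $g=\sqrt{v}\,v''$ on $\{v>0\}$, and the integrations by parts using $v'''=0$) already yields the stronger inequality \eqref{eq:T} with the boundary term retained. Your handling of the degenerate weight near $\pa\{v>0\}$ via localization on $\{v>\delta\}$ is exactly the step the paper performs before \eqref{eq:gg}.
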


We end this section with the proof of the key lemmas:

\begin{proof}[Proof of Lemma \ref{lem:matias}]
Using \eqref{eq:vphiT}, we can write:
\begin{equation}\label{eq:vphiT2} 
\langle T^\eps , \phi^4\rangle  =  \int_\Omega u^\eps (u^\eps_{xx})^2 \phi^4 \, dx - \frac {10} {3}\int_\Omega {u_x^\eps}^3 \phi^3 \phi' \, dx- \frac 1 2 \int_\Omega u^\eps {u^\eps_x}^2 \phi^2 [ 4\phi \phi'' + 12{\phi'}^2]\, dx.
\end{equation}

Next, we show that for any function $v(x)$ such that $vv'=0$ on $\pa\Omega$, we have the inequality:
\begin{equation} \label{eq:v4}
 \int_\Omega |v'|^4 \phi^4 \, dx\leq C \| v\|_{L^\infty(\supp\phi) } \int_\Omega v {v''}^2 \phi^4 \, dx
 + C \| \phi'\|_{L^\infty}^4 \int_{\supp\phi} v^4 \, dx.
 \end{equation}
Indeed, using an integration by parts and the fact that $v v_x = 0 $ on $\pa\Omega$, we deduce
\begin{align*}
 \int_\Omega |v'|^4 \phi^4 \, dx 
 & = -3\int_\Omega v {v'}^2 v''\phi^4 \, dx - 4 \int_\Omega v {v'}^3 \phi^3 \phi'\\
 & \leq 3 \| v\|_{L^\infty}^{1/2} \left(\int _\Omega v ( v'')^2 \phi^4 \, dx\right)^{1/2} \left( \int _\Omega v'^4 \phi^4\, dx \right)^{1/2} \\
 & \qquad + 4 \left( \int_\Omega {v'}^4 \phi^4\, dx \right)^{3/4} \left( \int_\Omega v^4 {\phi'}^4\, dx\right)^{1/4}
 \end{align*}
 which implies \eqref{eq:v4}.
 
Using \eqref{eq:v4}, we then get (for any $\lambda>0$):
\begin{align*}
\int_\Omega {u_x^\eps}^3 \phi^3 \phi' \, dx
& \leq 
\left( \int_\Omega {u_x^\eps}^4 \phi^4  \, dx\right)^{3/4}
\left(\int_\Omega {\phi'}^4 \, dx\right) ^{1/4}\\
& \leq \lambda^{4/3} \int_\Omega {u_x^\eps}^4 \phi^4  \, dx + \frac{1}{\lambda^4} \int_\Omega {\phi'}^4 \, dx\\
& \leq
 \lambda^{4/3} C \| u^\eps\|_{L^\infty } \int u^\eps {u^\eps_{xx}}^2 \phi^4 \, dx + \lambda^{4/3} C
 + \frac{1}{\lambda^4} \int_\Omega {\phi'}^4 \, dx 
\end{align*}
for some constant $C$ depending on $\phi$ and $\| u^\eps\|_{L^\infty}$.
Choosing $\lambda^{4/3} = \frac{1}{2C \| u^\eps\|_{L^\infty }}$, we deduce
\begin{equation}\label{eq:v3}
\left| \int_\Omega {u_x^\eps}^3 \phi^3 \phi' \, dx\right|
\leq \frac1  2 \int_\Omega u^\eps {u^\eps_{xx}}^2 \phi^4 \, dx + C.
\end{equation}
We also have
\begin{equation}\label{eq:at}
\left|\int_\Omega u^\eps {u^\eps_x}^2 \phi^2 [ 4\phi \phi'' + 12{\phi'}^2]\, dx\right|
\leq 
C(\phi) \| u\|_{L^\infty}\int_\Omega {u^\eps_x}^2\, dx 
\end{equation}

Equation \eqref{eq:vphiT2} together with  \eqref{eq:v3} and \eqref{eq:at} and the monotonicity of the $H^1$ seminorm \eqref{eq:energy} gives the result.
\end{proof}

\begin{proof}[Proof of Lemma \ref{lem:uv}]
First, we recall that $u^\eps(t,\cdot)$ is bounded in $H^1(\Omega)$. Hence, $u^{\eps_k}_x\phi$ converges to $v_x\phi$ weakly in $L^2(\Omega)$. Also, the assumptions imply that the function $g^{\eps_k} = \sqrt{u^{\eps_k}} u^{\eps_k}_{xx} \phi^2$ is bounded in $L^2(\Omega)$ and thus converges (up to another subsequence) to $g(x)$ weakly in $L^2(\Omega)$.
We now write
\begin{align*}
\int_\Omega |u^{\eps_k}_x \phi |^2\, dx & = -\int_\Omega u^{\eps_k} u^{\eps_k}_{xx}\phi^2\, dx - \int_\Omega u^{\eps_k}u^{\eps_k}_x 2 \phi \phi'\, dx \\
& = -\int_\Omega \sqrt {u^{\eps_k}} g ^{\eps_k} \, dx - \int_\Omega u^{\eps_k}u^{\eps_k}_x 2 \phi \phi'\, dx 
\end{align*}
The convergence above and the uniform convergence of $u^{\eps_k}$ thus imply
$$
\lim_{k\to \infty}
\int_\Omega |u^{\eps_k}_x \phi |^2\, dx = 
 -\int_\Omega \sqrt {v} g   \, dx - \int_\Omega v v' 2 \phi \phi'\, dx 
$$
It remains to see (proceeding as in \eqref{eq:gg}) that 
\begin{equation*}
\sqrt {v}  g = v v'' \mbox{ a.e. in $\Omega$}
\end{equation*}
to conclude that
$$
\lim_{k\to \infty}
\int_\Omega |u^{\eps_k}_x \phi |^2\, dx = \int_\Omega |v' \phi|^2\, dx$$
This implies that $u^{\eps_k}_x \phi $ converges strongly to $v' \phi$ in $L^2$.
We note that the hypothesis \eqref{eq:bd3dd} and the bound \eqref{eq:v4}
imply that $u^\eps_x \phi$ is bounded in $L^4$. A classical argument now yields the strong convergence in $L^p$ for all $p<4$.

We finally note that the limit is independent of any subsequence we have extracted, hence the full sequence $u^{\eps_k}_x \phi$ converges to $v_x\phi$ 
\end{proof}

\section{Proof of Theorem \ref{thm:u}}\label{sec:thmu}

Parts of Theorem~\ref{thm:u} were already proven in Proposition \ref{prop:uacc}:
Up to another subsequence $\eps_k'$, every accumulation point $v(x)$ of $\{u^{\eps_k'}(t,\cdot)\}$ for $t\in \P$  satisfies
$$ v'''=0 \mbox{ in } \{v>0\}, \quad\int_\Omega v(x)\, dx=\int u_{in}(x)\, dx .$$

The only statement that remains to be proved is thus  \eqref{eq:vsupp}.
The first inclusion in  \eqref{eq:vsupp} is a consequence of the uniform converge of $u^{\eps_k'}$ to $v$ and of the properties of the function $\rho$, as shown in the following lemma:
\begin{lemma}
There exists a subsequence $\eps_k'$ of $\eps_k$ such that 
for almost every $t>0$ and for every accumulation point $v$ of $\{u^{\eps_k'}(t,\cdot)\}$, we have
$$\{v>0\}\subset \Sigma(t)$$.
\end{lemma}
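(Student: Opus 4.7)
The plan is to combine the uniform convergence $u^{\eps_k''}(t,\cdot)\to v$ on neighborhoods where $v>0$ with the fact that $B^\eps$ concentrates at $1$ on any fixed positive level. First I would refine the subsequence $\eps_k'$ so that, in addition to the properties from Proposition \ref{prop:uacc}, we have $\rho^{\eps_k'}(t,\cdot)\to\rho(t,\cdot)$ strongly in $W^{-1,1}(\Omega)$ for every $t$ in a full measure subset of $(0,T)$, which I will continue to denote $\P$. This follows from the hypothesis $\rho^{\eps_k}\to\rho$ in $L^p(0,T;W^{-1,1}(\Omega))$ by a standard subsequence extraction. Combined with the uniform bound $\|\rho^{\eps_k'}\|_{L^\infty}\le C$ from \eqref{eq:rhoeps01}, a weak-$\ast$ compactness plus uniqueness-of-limit argument upgrades this to $\rho^{\eps_k'}(t,\cdot)\rightharpoonup^\ast\rho(t,\cdot)$ in $L^\infty(\Omega)$ for every $t\in\P$.

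Fix such a $t$, and let $v\in H^1(\Omega)\cap C^0(\Omega)$ be any accumulation point of $\{u^{\eps_k'}(t,\cdot)\}$, realized as a uniform limit along some sub-subsequence $\eps_k''$. I would then pick $x_0\in\{v>0\}$; since $v$ is continuous and $\{v>0\}$ is open in $\Omega$, there exist $\delta,r>0$ such that $v\ge\delta$ on $B_r(x_0)\cap\Omega$, and uniform convergence yields $u^{\eps_k''}(t,x)\ge\delta/2$ on $B_r(x_0)\cap\Omega$ for all $k$ large. Since $s\mapsto B^\eps(s)$ is monotone increasing, this gives the pointwise lower bound
$$
\rho^{\eps_k''}(t,x)=B^{\eps_k''}(u^{\eps_k''}(t,x))\ge B^{\eps_k''}(\delta/2)\qquad\mbox{on }B_r(x_0)\cap\Omega.
$$
The key analytic input at this stage is the pointwise limit $\lim_{\eps\to 0}B^\eps(\delta/2)=1$ established in the introduction.

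The final step is to pass to the limit in the integral of $\rho^{\eps_k''}(t,\cdot)$ over $B_r(x_0)\cap\Omega$. The sub-subsequence $\rho^{\eps_k''}(t,\cdot)$ still converges weak-$\ast$ to $\rho(t,\cdot)$ in $L^\infty(\Omega)$, the limit being inherited from the parent sequence, so testing against $\chi_{B_r(x_0)\cap\Omega}\in L^1(\Omega)$ produces
$$
\int_{B_r(x_0)\cap\Omega}\rho(t,y)\,dy\;\ge\;\liminf_{k\to\infty}B^{\eps_k''}(\delta/2)\,|B_r(x_0)\cap\Omega|\;=\;|B_r(x_0)\cap\Omega|.
$$
Together with $\rho\le 1$ a.e.\ from \eqref{eq:rho01}, this forces $\rho(t,\cdot)=1$ a.e.\ on $B_r(x_0)\cap\Omega$, and by the definition of $\Sigma(t)$ this means exactly $x_0\in\Sigma(t)$. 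Since $x_0\in\{v>0\}$ was arbitrary, we would conclude $\{v>0\}\subset\Sigma(t)$. The only mildly technical point is coordinating the subsequence extractions (one for the time-wise strong $W^{-1,1}$ convergence of $\rho$, one for the uniform convergence of $u$), but once a single $\eps_k'$ is chosen with the $W^{-1,1}$ pointwise-in-time property, the $L^\infty$ weak-$\ast$ limit is automatically inherited by any further sub-subsequence, so nothing further is needed; note in particular that no information about $\pa\{v>0\}$ enters, which is why this argument yields only the inclusion and not the second (disjointness) statement of \eqref{eq:vsupp}.
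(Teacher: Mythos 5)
Your argument is correct and follows essentially the same route as the paper: bound $u^{\eps_k''}(t,\cdot)$ below by a positive constant on a ball around $x_0\in\{v>0\}$ via uniform convergence, deduce $\rho^{\eps_k''}\ge B^{\eps_k''}(\delta/2)\to 1$ there by monotonicity of $B^\eps$, and pass to the limit to conclude $\rho(t,\cdot)=1$ a.e.\ on that ball, hence $x_0\in\Sigma(t)$. The only difference is that you spell out the limit passage more carefully (upgrading the pointwise-in-time $W^{-1,1}$ convergence to weak-$\ast$ $L^\infty$ convergence and testing against the indicator of the ball, then invoking $\rho\le1$), which the paper leaves implicit.
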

\begin{proof}
We recall that by Theorem~\ref{thm:rho}, we have 
\begin{equation*} 
 \rho^{\eps_k} (\cdot,t)\to \rho(\cdot,t) \mbox{ strongly in $W^{-1,1}(\Omega)$  for all  $t\in \P$. }
\end{equation*}

Next, we recall that $H^1(\Omega) \subset C^{1/2}(\Omega)$ and so the functions $x\mapsto u^{\eps_k} (t,x)$ are equi-continuous with respect to $k$.
Let now $v(x)$ be such that
$$ u^{\eps_k'}(t,x) \to v(x) \mbox{ uniformly in $x$ and weakly in $H^1(\Omega)$}.$$

If $x_0\in \{v>0\}$, then there exists $\delta>0$ and $r>0$ such that
$$ u^{\eps_k'}(t,x)\geq \delta \mbox{ in $B_r(x_0)$ and for all $k$ large enough}.$$
In particular, we deduce
$$ \rho^{\eps_k'} (t,x)\geq B^{\eps_k'} (\delta) 
\mbox{ in $B_r(x_0)$ and for all $k$ large enough}.$$
Passing to the limit, we deduce that $\rho(t,x)=1$ in $B_r(x_0)$ and the definition of $\Sigma(t)$ implies that $x_0\in \Sigma(t)$.
\end{proof}

The second inclusion in \eqref{eq:vsupp} is more delicate. It comes from the fact that on the set $\Sigma(t)$, $\rho$ is maximum and so $\pa_t \rho \leq 0$. Therefore, $\pa_t \rho$ must vanish on that set, since $t\mapsto \rho(t,x)$ is non-decreasing and this implies that the distribution $T$ should be zero there. 
If there was a point $x_0\in \pa\{v>0\}\cap \Sigma(t)$, this would then contradict inequality \eqref{eq:T}.

However, because $\pa_t \rho^{\eps_k}$ does not converge pointwise, this will only hold 
after extracting another subsequence, as stated in the following result:
\begin{proposition}\label{prop:suppu}
There exists a subsequence $\eps_k'$ of $\eps_k$ such that 
for every $t\in \P$ and for every accumulation point $v$ of $\{u^{\eps_k'}(t,\cdot)\}$, we have
$$ \pa \{v>0\}\cap \Sigma(t) = \emptyset.$$
\end{proposition}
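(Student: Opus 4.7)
The plan is to argue by contradiction, combining the saturation of $\rho$ on $\Sigma(\cdot)$ with Proposition~\ref{prop:1}. I begin by choosing a countable dense family $\{\phi_m\}$ of non-negative test functions in $\mathcal D(\Omega)$ and restricting $\P$ to avoid the countably many jump times of each non-decreasing map $f_{\phi_m}(t):=\int_\Omega \rho(t,x)\phi_m^4\,dx$; the restricted $\P$ retains full measure. Suppose for contradiction that at some $t_0\in\P$ there exist an accumulation point $v_0$ of $\{u^{\eps_k'}(t_0,\cdot)\}$ and a point $x_0\in\pa\{v_0>0\}\cap\Sigma(t_0)$. By Proposition~\ref{prop:uacc}, $v_0$ is a non-degenerate parabola on each component of $\{v_0>0\}$, so $|v_0'(x_0)|>0$. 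Pick $\phi=\phi_m$ with $\supp\phi\subset B_r(x_0)\Subset\Sigma(t_0)$ and $\phi(x_0)>0$; since $\rho(t_0,\cdot)=1$ a.e.\ on $B_r(x_0)$, the monotonicity of $\rho$ in $t$ combined with $\rho\leq 1$ forces $f_\phi(t)\equiv\|\phi\|_{L^4}^4$ for all $t\geq t_0$.

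The key integrated identity comes from testing $\pa_t\rho^{\eps_k'}=\pa_x R^{\eps_k'}+T^{\eps_k'}$ against $\phi^4\eta(t)$ for $\eta\in C_c^\infty((t_0,T))$ non-negative: the left-hand side $-\int f_\phi\eta'\,dt$ vanishes by the constancy of $f_\phi$, Lemma~\ref{lem:pReps} absorbs the $R$-term, and hence $\lim_k \int_0^T\eta(t)\langle T^{\eps_k'}(t),\phi^4\rangle\,dt=0$. Since this limit is inherited by every subsequence, Fatou applied to $\langle T^{\eps_{k_j}'}(t),\phi^4\rangle+C\geq 0$ (via Lemma~\ref{lem:matias}) gives $\liminf_j \langle T^{\eps_{k_j}'}(t),\phi^4\rangle\leq 0$ for a.e.\ $t\in(t_0,T)$ along any subsequence $\eps_{k_j}'\subset\eps_k'$. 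At such a $t$, for any accumulation point $v(t)$ of $\{u^{\eps_k'}(t,\cdot)\}$ a simultaneous sub-subsequence extraction—made possible by the pre-compactness of $\{u^{\eps_k'}(t)\}$ in $C^0$ and $H^1$-weak—yields a $\liminf\leq 0$ along a subsequence that also converges to $v(t)$. Combining with Proposition~\ref{prop:1} forces $\tfrac13\int_{\pa\{v(t)>0\}}|v'(t)|^3\phi^4\,d\mathcal H^0\leq 0$, and since $v(t)$ is parabolic with non-zero slope at each boundary point of its support (Proposition~\ref{prop:uacc}, 1D), this forces $\pa\{v(t)>0\}\cap B_r(x_0)=\emptyset$.

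The principal obstacle is that the above argument only controls times $t>t_0$, not $t_0$ itself. I would close the contradiction by a Lebesgue density argument: letting $B\subset\P$ denote the set of bad times where the proposition's conclusion fails, the above shows that $B\cap(t_0,t_0+\delta)$ is null for every $t_0\in B$ and every $\delta>0$, so no $t_0\in B$ can be a Lebesgue density point of $B$, forcing $|B|=0$. A final restriction of $\P$ to $\P\setminus B$ (still of full measure) then yields the proposition. The most delicate aspect of the middle paragraph is the simultaneous subsequence extraction, which requires finding a sub-subsequence along which both the vector $u^{\eps_{k_j}'}(t)\to v(t)$ and the scalar $\langle T^{\eps_{k_j}'}(t),\phi^4\rangle$ tends to its $\liminf$; handling this uniformly across arbitrary accumulation points is where one must be particularly careful.
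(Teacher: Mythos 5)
Your overall strategy (saturation of $\rho$ on $\Sigma$, sign of $T^\eps$, Proposition~\ref{prop:1}, countable family of test functions) is the same as the paper's, but the step you yourself flag as delicate is a genuine gap, and it is exactly the point where the paper does something you omit. From $\lim_k\int\eta\,\langle T^{\eps_k'}(t),\phi^4\rangle\,dt=0$, Fatou and Proposition~\ref{prop:T2} only give $\liminf_{k}\langle T^{\eps_k'}(t),\phi^4\rangle=0$ for a.e.\ $t$ \emph{along the full sequence}. This does not imply that the $\liminf$ is $\le 0$ along the particular subsequence realizing an arbitrary accumulation point $v(t)$: a subsequence can avoid precisely the indices where $\langle T^{\eps_k'}(t),\phi^4\rangle$ is small (typewriter-type examples show $\int f_k\to0$, $f_k\ge -C$, $\liminf_k f_k=0$ a.e., yet $\limsup_k f_k=+\infty$ a.e.). Your "simultaneous sub-subsequence extraction" cannot repair this, because what you need is $\limsup_k\langle T^{\eps_k'}(t),\phi^4\rangle\le0$, i.e.\ the full limit equal to zero; and the statement "for each subsequence, for a.e.\ $t$, $\liminf\le0$" cannot be upgraded to "for a.e.\ $t$, for every subsequence, $\liminf\le0$" since there are uncountably many subsequences. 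The paper resolves this by observing that $\langle T^{\eps_k}(t),\phi^4\rangle-\inf_{k'\ge k}\langle T^{\eps_{k'}}(t),\phi^4\rangle$ is non-negative with integral tending to zero, hence converges to zero in $L^1$, so that along a \emph{further} subsequence $\lim_k\langle T^{\eps_k'}(t),\phi^4\rangle=0$ for a.e.\ $t$; only then does every sub-subsequence (in particular the one converging to a given $v$) inherit the limit, and Proposition~\ref{prop:1} applies to all accumulation points. This extra extraction, diagonalized over a countable family of times and test functions, is the heart of the proof and is missing from your argument.

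A second, lesser gap is the closing Lebesgue-density argument. At a bad time $t_0$ your argument only shows that for a.e.\ $t>t_0$ no accumulation point has a boundary point inside the fixed small set $\{\phi>0\}\subset B_r(x_0)$; it does not show that such $t$ are good times, since badness at $t$ could be witnessed elsewhere in $\Sigma(t)$. Hence "$B\cap(t_0,t_0+\delta)$ is null" does not follow, and the density argument collapses. The paper instead uses the monotonicity $\Sigma(s)\subset\Sigma(t)$ together with $\Sigma(t)=\bigcup_{s_l<t}\Sigma(s_l)$ for a.e.\ $t$ (valid because $|\Sigma(\cdot)|$ is monotone and has countably many jumps) and a countable family $\phi_{l,m}$ exhausting each $\Sigma(s_l)$, so that every potential witness point is covered by some $\phi_{l,m}$ with $s_l<t$.
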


\begin{proof}[Proof of Proposition \ref{prop:suppu}]
First, we note that given a test function $\phi$ such that $\supp \phi \subset \Sigma(s)$, then for all $t>s$ we have 
\begin{align*}
\lim_{k\to\infty} \int_s^t  \langle T^{\eps_k}(\tau), \phi^4\rangle \, d\tau & =  
\lim_{k\to \infty}\left( \int_\Omega \rho^{\eps_k}(t,x) \phi(x)^4\, dx - \int \rho^{\eps_k}(s,x) \phi(x)^4\, dx\right)\\
& = \int_\Omega \rho(t,x)\phi(x)^4\, dx -  \int_\Omega \phi(x)^4\, dx\\
& \leq 0.
\end{align*}
Using Proposition \ref{prop:T2}, we deduce that
$$ \lim_{k\to\infty} \int_s^t  \langle T^{\eps_k}(\tau), \phi^4\rangle \, d\tau = 0 $$
and 
$$\liminf _{k\to\infty}\langle T^{\eps_k}(t), \phi^4\rangle  = 0 \qquad \mbox{ a.a. $t>s$}.$$

This implies that the function $\langle T^{\eps_k}(\tau), \phi^4\rangle  -\inf _{k'\geq k}\langle T^{\eps_k'}(t), \phi^4\rangle $, which is non-negative and bounded in $L^1(0,T)$ by Lemma~\ref{lem:matias}, satisfies 
$$ \int_s^t  \langle T^{\eps_k}(\tau), \phi^4\rangle  -\inf _{k'\geq k}\langle T^{\eps_k'}(t), \phi^4\rangle   \, d\tau \to 0$$
and thus
converges to zero in $L^1(s,t)$.
We deduce that up to another subsequence, we have 
$$\lim _{k\to\infty}\langle T^{\eps_k'}(t), \phi^4\rangle  = 0 \qquad \mbox{ a.a. $t>s$}.$$

Of course, this subsequence depends  on $s$ and on the test function $\phi$.
However, if we consider a dense subset $\{s_l\}_{l\in \NN} \subset (0,\infty)$
and a countable family of smooth function $\phi_{l,m}$ such that for all $l\in \NN$ we have
$$\supp \phi_{l,m}\subset \Sigma(s_l) \mbox{ for all $m\in\NN$}, \qquad \lim_{m\to \infty} \phi_{l,m} = \chi_{\Sigma(s_l)},$$
we can use a diagonal extraction argument to extract a subsequence $\eps_k'$ such that
for all $(l,m)\in \NN\times \NN$ 
$$\lim _{k\to\infty}\langle T^{\eps_k'}(t), \phi_{l,m}^4\rangle  = 0 \qquad \mbox{ a.a. $t>s_l$}.$$

Since there are countably many $s_l$, we deduce that for a.e. $t>0$ and for all $n$ such that $s_l<t$, we have
$$\lim _{k\to\infty}\langle T^{\eps_k'}(t), \phi_{l,m}^4\rangle  = 0 \qquad \forall m.$$

Using Proposition \ref{prop:1}, we deduce that  for almost every  $t>0$, if $v$ is an accumulation point of the subsequence  $u^{\eps_k'}(t)$ defined above, 
then
$$\frac 1 3   \int_{\pa\{v>0\}} |v'(x)| ^3 \phi_{l,m} (x)^4 d \mathcal H^0(x) = 0 $$
for all $l$ such that $s_l<t$ and all $m\in \NN$.
Since we know that $v'\neq 0$ on $\pa\{v>0\}$ (recall that $v$ is a parabola), we deduce that 
$$\pa\{v>0\} \cap \Sigma(s_l) = \emptyset \qquad\mbox{for all $l$ such that $s_l<t$.}$$

To conclude, we show that 
\begin{equation}\label{eq:sigma t equals limit}
\Sigma(t) = \bigcup_{l\mbox{ s.t. } s_l<t} \Sigma(s_l) \mbox{ for a.e. $t>0$}.    
\end{equation}
This follows from the monotonicity of $\rho$ \eqref{eq:positivity}, which implies that the sets $\Sigma(t)$ are increasing in $t$. Therefore, the quantity $|\Sigma(t)|$ is monotonic in time and can only have a countable number of discontinuities. In particular, for every $t$ which is a continuity point of $|\Sigma(\cdot)|$, we have
$$
\left|\Sigma(t)\setminus \bigcup_{l \mbox{ s.t. } s_l<t} \Sigma(s_l)\right|=|\Sigma(t)|-\lim_{s\to t^-}|\Sigma(s)|=0.
$$
This implies \eqref{eq:sigma t equals limit}.
\end{proof}

\section{Proof of Theorem \ref{thm:sup}}\label{sec:thmsup}
The goal of this section is to establish the inequality \eqref{eq:tannerglobal}.
In fact, we will prove the following slightly stronger statement:
\begin{proposition}\label{prop:sup1}
For a given test function $\phi\in \mathcal D(\overline \Omega)$, there exists a function $w(t,x)$ defined for $t\in \P$ and $x\in \Omega$ such that the following inequality holds in the sense of distribution:
$$ 
\frac{d}{dt} \int_\Omega \rho(t,x) \phi^4(x)\, dx \geq 
 \frac 1 3  \int_{\pa\{w>0\}} |w_x(t,x)| ^3 \phi(x)^4 d \mathcal H^0(x) 
$$
where 
\begin{equation}\label{eq:wcond}
    w_{xxx} = 0 \mbox{ in } \{w>0\} \subset \Sigma(t),\quad \pa\{w>0\}\cap \Sigma(t)=\emptyset, \quad \int_\Omega w(t,x)\, dx =1.
\end{equation}
\end{proposition}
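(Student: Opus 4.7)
I would test the distributional identity \eqref{eq:tfed}, namely $\partial_t \rho^{\eps_k'} = T^{\eps_k'} + \partial_x R^{\eps_k'}$, against $\phi(x)^4\eta(t)$ for an arbitrary non-negative $\eta\in\mathcal{D}((0,T))$ and then pass to the limit $k\to\infty$. The left-hand side $-\int_0^T\!\!\int_\Omega \rho^{\eps_k'}\phi^4\eta'\,dx\,dt$ converges to $-\int_0^T\!\!\int_\Omega \rho\,\phi^4\eta'\,dx\,dt$ by the strong convergence $\rho^{\eps_k'}\to\rho$ in $L^p(0,T;W^{-1,1}(\Omega))$ combined with the uniform $L^\infty$-bound on $\rho^{\eps_k'}$. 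The contribution $\int_0^T\langle\partial_x R^{\eps_k'}(t),\phi^4\rangle\eta(t)\,dt$ vanishes in the limit thanks to Lemma~\ref{lem:Reps00} and Lemma~\ref{lem:pReps}. So all the work lies in producing a lower bound for the $T^{\eps_k'}$-term.

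By Lemma~\ref{lem:matias} one has $\langle T^{\eps_k'}(t),\phi^4\rangle \geq -C$ uniformly in $k$ and $t$, with $C$ depending only on $\phi$ and $u_{in}$. Adding $C\eta(t)$ to the integrand and applying Fatou's lemma yields
\[
\liminf_{k}\int_0^T \langle T^{\eps_k'}(t),\phi^4\rangle\,\eta(t)\,dt \;\geq\; \int_0^T \Bigl(\liminf_{k}\langle T^{\eps_k'}(t),\phi^4\rangle\Bigr)\eta(t)\,dt.
\]
For each fixed $t\in\mathcal{P}$ I then construct $w(t,\cdot)$ by a double extraction: first pick a sub-subsequence $\{\eps_{k_j}\}\subset\{\eps_k'\}$ along which $\langle T^{\eps_{k_j}}(t),\phi^4\rangle\to \liminf_{k}\langle T^{\eps_k'}(t),\phi^4\rangle$, then use the uniform $H^1$-bound to extract a further sub-subsequence so that $u^{\eps_{k_j}}(t,\cdot)$ converges uniformly and $H^1$-weakly to some $w(t,\cdot)$. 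Theorem~\ref{thm:u} guarantees that $w(t,\cdot)$ satisfies every condition in \eqref{eq:wcond}, and Proposition~\ref{prop:1} applied along this sub-subsequence yields
\[
\liminf_k \langle T^{\eps_k'}(t),\phi^4\rangle \;\geq\; \tfrac{1}{3}\int_{\partial\{w(t)>0\}} |w_x(t,x)|^3\phi(x)^4\,d\mathcal{H}^0(x).
\]
Combining this pointwise lower bound with the Fatou inequality above gives the required distributional estimate.

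The main obstacle will be the measurability of $t\mapsto w(t,\cdot)$, since the extraction above depends on $t$. Because each accumulation point of $\{u^{\eps_k'}(t,\cdot)\}$ is a piecewise parabolic profile supported in $\Sigma(t)$ (Theorem~\ref{thm:u}), the set of accumulation points at time $t$ is parametrised by the distribution of the unit mass among the (at most countably many) connected components of $\Sigma(t)$. I would then obtain a measurable $w(t,x)$ via a Kuratowski--Ryll-Nardzewski selection applied to this weakly closed nonempty set of profiles, or equivalently by selecting at each $t$ a profile maximising $v\mapsto \int_{\partial\{v>0\}}|v'|^3\phi^4\,d\mathcal{H}^0$ (attained by weak compactness together with the continuity of this functional on the finite-dimensional parametrisation) and verifying measurability by a diagonal extraction along a countable dense subset of times.
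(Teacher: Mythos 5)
Your proposal is correct and follows essentially the same route as the paper: test $\partial_t\rho^{\eps} = T^{\eps}+\partial_x R^{\eps}$ against $\phi^4\eta$, kill the $R$-term via Lemma~\ref{lem:Reps00}, use the uniform lower bound of Lemma~\ref{lem:matias} to apply Fatou, and for each $t\in\P$ build $w(t,\cdot)$ by the same double extraction (first realizing the $\liminf$, then the uniform/$H^1$-weak limit) combined with Proposition~\ref{prop:1} and Theorem~\ref{thm:u} — this is precisely the paper's Proposition~\ref{prop:2}. Your additional attention to the measurability of $t\mapsto w(t,\cdot)$ is a legitimate technical point that the paper passes over in silence, and your proposed selection argument is a reasonable way to settle it.
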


Applying this result to the function $\phi=1$ gives \eqref{eq:tannerglobal}.
\medskip

We first prove:
\begin{proposition}\label{prop:2}
For a given $t\in\P$ and for a given test function $\phi\in \mathcal D(\overline \Omega)$, there exists an accumulation point $v$ of $\{u^{\eps_k}(t)\}_{k\in \NN}$ such that
$$ \liminf_{k\to\infty} \langle T^{\eps_k}(t) , \phi^4 \rangle   \geq
\frac 1 3  \int_{\pa\{v>0\}} |v'(x)| ^3 \phi(x)^4 d \mathcal H^0(x) 
$$
\end{proposition}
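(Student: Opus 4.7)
The statement is essentially a restatement of Proposition~\ref{prop:1} with the additional flexibility that the accumulation point $v$ is not prescribed in advance: we may choose the convergent subsequence of $\{u^{\eps_k}(t,\cdot)\}$ so as to realize the $\liminf$ on the left-hand side. The argument is a standard double extraction that exploits the uniform $H^1$ bound on $u^{\eps_k}(t,\cdot)$.

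First I would dispose of the trivial case. If $\liminf_{k\to\infty}\langle T^{\eps_k}(t),\phi^4\rangle=+\infty$, the inequality holds for any accumulation point $v$ of $\{u^{\eps_k}(t,\cdot)\}$, and such an accumulation point exists because the sequence is bounded in $H^1(\Omega)$ and $H^1(\Omega)$ embeds compactly into $C^0(\Omega)$. By Lemma~\ref{lem:matias} the $\liminf$ is in any case bounded below, so I may otherwise assume
$$
L:=\liminf_{k\to\infty}\langle T^{\eps_k}(t),\phi^4\rangle\in\RR.
$$

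Next I would extract a minimizing subsequence $\eps_{k_j}$ so that $\langle T^{\eps_{k_j}}(t),\phi^4\rangle\to L$ as $j\to\infty$. Since $\{u^{\eps_{k_j}}(t,\cdot)\}_j$ is uniformly bounded in $H^1(\Omega)$, a further subsequence (not relabeled) converges uniformly on $\Omega$ and weakly in $H^1(\Omega)$ to some $v\in H^1(\Omega)$, which is by construction an accumulation point of $\{u^{\eps_k}(t,\cdot)\}_{k\in\NN}$. Applying Proposition~\ref{prop:1} to this subsequence then yields
$$
L=\lim_{j\to\infty}\langle T^{\eps_{k_j}}(t),\phi^4\rangle=\liminf_{j\to\infty}\langle T^{\eps_{k_j}}(t),\phi^4\rangle\geq\frac{1}{3}\int_{\pa\{v>0\}}|v'(x)|^3\phi(x)^4\,d\mathcal H^0(x),
$$
which is the desired inequality.

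I do not anticipate any real obstacle in this proof: the analytical core (the lower-semicontinuity step producing the term $\int v(v'')^2\phi^4\,dx$ from the $u^\eps(u^\eps_{xx})^2$ contribution in \eqref{eq:vphiT}, together with the integration by parts using $v'''=0$ on $\{v>0\}$ to reduce everything to the boundary term) is already contained in Proposition~\ref{prop:1}. What remains here is only the choice of a minimizing subsequence, from which a further convergent subsequence may be extracted thanks to the uniform $H^1$ regularity provided by the energy inequality \eqref{eq:energy}.
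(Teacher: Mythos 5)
Your proposal is correct and follows essentially the same argument as the paper: extract a subsequence realizing the $\liminf$, use the uniform $H^1(\Omega)$ bound to pass to a further subsequence converging uniformly and weakly in $H^1$ to some accumulation point $v$, and then apply Proposition~\ref{prop:1}. The only (harmless) addition is your explicit treatment of the case $\liminf = +\infty$, which the paper leaves implicit.
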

\begin{proof}
We take a subsequence $\eps_k'$ such that
$$ \lim_{k\to\infty} \langle T^{\eps_k'}(t) , \phi^4 \rangle  = \liminf_{k\to\infty} \langle T^{\eps_k}(t) , \phi^4 \rangle. $$
Up to a further subsequence, there exists $v$ such that
$$
u^{\eps_k'}(t)\to v
$$
uniformly and weakly in $H^1$. Then Proposition \ref{prop:1} implies the result.
\end{proof}
\begin{proof}[Proof of Proposition~\ref{prop:sup1}]
For each $t\in \mathcal{P}$, we define $w(t,x)=v(x)$, where $v$ is given by Proposition~\ref{prop:2}, where we have used the sequence $\eps'_k\to 0$ given by Theorem~\ref{thm:u}. It follows by Theorem~\ref{thm:u} that $w$ satisfies \eqref{eq:wcond}.

We take a positive test function $\eta\in\mathcal{D}([0,T])$, and we notice that because the limit exists
$$
\lim_{k\to\infty}\int_0^T \langle \pa_t\rho^{\eps_k}(t),\phi^4\rangle \eta(t)\, dt=\lim_{k\to\infty}\int_0^T \langle \pa_t\rho^{\eps'_k}(t),\phi^4\rangle \eta(t)\, dt,
$$
where $\eps'_k$ is the subsequence of $\eps_k$ given by Theorem~\ref{thm:u}. We forego relabeling the subsequence for notational convenience.

Using our decomposition of the time derivative of $\rho^\eps$ \eqref{eq:rhoeps1}, we have
$$
\int_0^T \langle \pa_t\rho^{\eps_k},\phi^4\rangle \eta\, dt= \int_0^T \langle \pa_x R^{\eps_k},\phi^4\rangle \eta\, dt+\int_0^T \langle T^{\eps_k},\phi^4\rangle \eta\, dt.
$$
By Lemma~\ref{lem:Reps00}, we know that the first term on the right hand side vanishes in the limit. By the lower bound \eqref{eq:matias}, we can apply Fatou's Lemma to obtain that
$$
\lim_{k\to\infty} \int_0^T \langle T^{\eps_k},\phi^4\rangle \eta\, dt\ge \int_0^T \liminf_{k\to\infty}  \langle T^{\eps_k},\phi^4\rangle \eta\, dt\ge \frac 1 3  \int_0^T\int_{\pa\{w>0\}} |w_x| ^3 \phi^4\eta\, d \mathcal H^0dt,
$$
where in the last inequality we have used our definition of $w$ and Proposition~\ref{prop:2}.

\end{proof}

\section{Proof of Theorem \ref{thm:cond1}}\label{sec:cond}
In this section, we prove the conditional result Theorem \ref{thm:cond1}.
First, we state the following lemma, which we will prove at the end of this section:
\begin{lemma}\label{lem:cond}
Assume that the sequence $u^{\eps_k}(t,x)$ is such that for almost every $t\in (0,T)$ we have
$$ u^{\eps_k}(t) \to w(t) \mbox{ uniformly and weakly in $H^1(\Omega)$}$$
and 
\begin{equation}\label{eq:cond2}
\lim_{k\to \infty} \int_0^T \int_\Omega u^{\eps_k} (u^{\eps_k}_{xx})^2\, dx\, dt 
= \int_0^T \int_{\{w>0\}} w (w_{xx})^2 \, dx\, dt.
\end{equation}
Then, 
\begin{equation}\label{eq:L1convergence}
\int_\Omega u^{\eps_k} (u^{\eps_k}_{xx})^2\, dx\to \int_{\{w>0\}} w (w_{xx})^2 \, dx\qquad\mbox{in $L^1(0,T)$}    
\end{equation}
and, up to another subsequence, we can assume that for a.a. $t\in(0,T)$
and for all test function  $\vphi\in \mathcal D(\overline \Omega)$
 we have
$$
\lim_{k\to \infty}  \int_\Omega u^{\eps_k} (u^{\eps_k}_{xx})^2 \vphi(x)^4\, dx
=\int_{\{w>0\}} w (w_{xx})^2 \vphi(x)^4\, dx 
$$
and 
$$
u^{\eps_k}_x(t) \vphi \to w_x(t)\vphi \qquad\mbox{strongly in $L^p (\Omega)$ for all $p<4$.}
$$
\end{lemma}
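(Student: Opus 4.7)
The strategy is to use the weak-$L^2$ lower semicontinuity inequality already exploited in the proof of Proposition~\ref{prop:T2} together with the integral hypothesis \eqref{eq:cond2} to upgrade the equality of time-integrated quantities to an a.e.\ pointwise statement, and then to deduce the remaining conclusions by a convex-combination trick and a direct appeal to Lemma~\ref{lem:uv}.

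First I would establish \eqref{eq:L1convergence}. Set $f_k(t):=\int_\Omega u^{\eps_k}(u^{\eps_k}_{xx})^2\,dx$ and $f(t):=\int_{\{w>0\}}w(w_{xx})^2\,dx$; both are non-negative and lie in $L^1(0,T)$, the first by the entropy inequality \eqref{eq:entropydiss} and the second by \eqref{eq:cond2}. Applying the weak-$L^2$ argument that produced \eqref{eq:gg} with the constant test function $\phi\equiv 1$ at each $t$ for which $u^{\eps_k}(t)\to w(t)$ uniformly and weakly in $H^1$ yields $\liminf_{k\to\infty}f_k(t)\geq f(t)$ for a.a. $t\in(0,T)$. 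Since $(f-f_k)^+\leq f\in L^1(0,T)$ and $\limsup_k(f-f_k)^+(t)\leq\bigl(f(t)-\liminf_k f_k(t)\bigr)^+=0$ a.e., dominated convergence gives $\int_0^T(f-f_k)^+\,dt\to 0$, which combined with the hypothesis $\int_0^T(f_k-f)\,dt\to 0$ forces $\int_0^T|f_k-f|\,dt\to 0$.

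Next, extract a further subsequence (still denoted $\eps_k$) such that $f_k(t)\to f(t)$ pointwise for a.a.\ $t$. Fix such a $t$ and any $\phi\in\mathcal D(\overline\Omega)$. The lower semicontinuity argument behind \eqref{eq:gg} works for any bounded non-negative continuous weight $\eta$: indeed $\sqrt{u^{\eps_k}}\,u^{\eps_k}_{xx}\sqrt{\eta}$ is bounded in $L^2(\Omega)$ and, by the same identification on each sub-level set $\{w>\delta\}$ used in Proposition~\ref{prop:T2}, its weak $L^2$-limit coincides with $\sqrt{w}\,w_{xx}\sqrt{\eta}$ on $\{w>0\}$, so weak lower semicontinuity of the $L^2$-norm yields $\liminf_k\int u^{\eps_k}(u^{\eps_k}_{xx})^2\eta\,dx\geq\int_{\{w>0\}}w(w_{xx})^2\eta\,dx$. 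Applying this inequality both with weight $\eta=\phi^4$ and with weight $\eta=\|\phi\|_\infty^4-\phi^4\geq 0$, and then adding and using $f_k(t)\to f(t)$, the positive and negative parts cancel to give
\begin{equation*}
\lim_{k\to\infty}\int_\Omega u^{\eps_k}(u^{\eps_k}_{xx})^2\phi^4\,dx=\int_{\{w>0\}}w(w_{xx})^2\phi^4\,dx.
\end{equation*}
In particular the left-hand side is bounded in $k$, so the hypotheses of Lemma~\ref{lem:uv} are met at this $t$ for this $\phi$, directly yielding $u^{\eps_k}_x(t)\phi\to w_x(t)\phi$ strongly in $L^p(\Omega)$ for every $p<4$. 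No diagonal extraction over $\phi$ is needed: once $t$ is one of the a.e.\ points at which $f_k(t)\to f(t)$, the scalar bound $f_k(t)\leq C$ implies $\int u^{\eps_k}(u^{\eps_k}_{xx})^2\phi^4\,dx\leq C\|\phi\|_\infty^4$ uniformly in $k$, and Lemma~\ref{lem:uv} then applies to every test function simultaneously.

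The one delicate point is the first step, namely extracting pointwise-in-$t$ information from the integrated equality \eqref{eq:cond2}. This relies on having both a one-sided (Fatou-type) pointwise inequality and an integrable majorant for its negative part; both ingredients are available here, the former through the weak-$L^2$ structure of $\sqrt{u^{\eps_k}}\,u^{\eps_k}_{xx}$ already exploited in Proposition~\ref{prop:T2}, and the latter directly from \eqref{eq:cond2}. Once these are in place, the remaining arguments are routine applications of already-established tools.
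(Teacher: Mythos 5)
Your proof is correct and follows essentially the same route as the paper's: a pointwise-in-$t$ Fatou-type lower bound obtained from weak $L^2$ lower semicontinuity of $\sqrt{u^{\eps_k}}\,u^{\eps_k}_{xx}$, saturated against the integrated hypothesis \eqref{eq:cond2}, followed by an application of Lemma~\ref{lem:uv}. The only (valid) variations are cosmetic: you get the $L^1(0,T)$ convergence via the $(f-f_k)^+$ dominated-convergence argument where the paper uses the monotone sequence $\inf_{l\geq k}f_l$ with Beppo--Levi, and you localize to $\vphi^4$ with the complementary-weight $\|\vphi\|_\infty^4-\vphi^4$ trick where the paper instead extracts a weak-$*$ convergent subsequence of the spatial measures $u^{\eps_k}(t)(u^{\eps_k}_{xx}(t))^2\,dx$ and identifies the limit measure by the zero-mass-defect argument.
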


\begin{proof}[Proof of Theorem \ref{thm:cond1}]
We fix a test function $\vphi$ and  consider a subsequence (still denoted $\eps_k$) such that the result of Lemma \ref{lem:cond} holds.
We recall (see \eqref{eq:TTE}) that
$$
\langle T^\eps(t) , \vphi^4 \rangle  =  \int_\Omega u^\eps (u^\eps_{xx})^2 \vphi^4\, dx - \frac 5 {6}\int_\Omega {u_x^\eps}^3 ( \vphi^4)' \, dx - \frac 1 2 \int_\Omega u^\eps {u^\eps_x}^2 (\vphi^4)''\, dx.
$$
Using Lemma \ref{lem:cond} and condition \eqref{eq:cond0} we can pass to the limit in this equality and get (for a.e. $t\in(0,T)$):
$$
\begin{array}{rcl}
\displaystyle\lim_{k\to\infty}\langle T^{\eps_k}(t), \vphi^4\rangle&=&\displaystyle \int_{\{w>0\}} w(t) (w_{xx}(t))^2 \vphi^4 \, dx - \frac 5 {6}\int_{\{w>0\}} {w_x}^3(t) (\vphi^4)' \, dx- \frac 1 2 \int_{\{w>0\}} w(t) {w_x}^2(t) (\vphi^4)''\, dx\\
& = &\displaystyle\frac{1}{3}\int_{\pa\{w(t)>0\}}|w_x(t)|^3\vphi(x)\,d\mathcal{H}^0.
\end{array}
$$
Furthermore, using  \eqref{eq:v3} and \eqref{eq:at}, we easily get:
\begin{align*}
 &\left| - \frac 5 {6}\int_\Omega {u_x^{\eps_k}}^3 ( \vphi^4)' \, dx - \frac 1 2 \int_\Omega u^{\eps_k} {u^{\eps_k}_x}^2 (\vphi^4)''\, dx\right| \\
&\qquad\qquad\qquad\qquad  = \left|  
- \frac {10} {3}\int_\Omega {u_x^{\eps_k}}^3 \vphi'\vphi^3 \, dx- \frac 1 2 \int_\Omega u^{\eps_k} {u^{\eps_k}_x}^2 \vphi^2[4\vphi\vphi''+12\vphi'^2]\, dx\right|\\
& \qquad\qquad\qquad\qquad\le C(\vphi)\left(\int_\Omega u^{\eps_k} (u^{\eps_k}_{xx})^2 \, dx+1\right), 
\end{align*}
which implies
$$
|\langle T^{\eps_k}(t), \vphi^4\rangle|\le C(\vphi)\left(\int_\Omega u^{\eps_k} (u^{\eps_k}_{xx})^2 \, dx+1\right).
$$
Since the right hand side converges in $L^1(0,T)$ (by the hypothesis \eqref{eq:cond0}),  we can use Lebesgue dominate convergence to show that for any test function $h\in \mathcal D([0,T))$
$$ \lim_{k\to\infty}
 \int_0^{T} \langle T^{\eps_k}(t), \vphi^4\rangle h(t) \,dt
= 
\frac{1}{3}\int_0^T h(t)\left(\int_{\pa\{w(t)>0\}}|w_x(t)|^3\vphi(x)\,d\mathcal{H}^0(x)\right)\,dt,
$$

Finally, 
for every test functions $\vphi \in \mathcal D( \overline \Omega)$ and $h\in \mathcal D([0,T))$
equation \eqref{eq:tfed} gives (using \eqref{eq:vphiT})
\begin{align*}
 -\int_0^T \int_\Omega \rho^{\eps_k}(t,x)h'(t) \vphi(x)^4 \, dx  \, dt&  = \int_\Omega \rho^{\eps_k}_{in}(x)\vphi(0,x) \, dx  - \int_0^T \int_\Omega R^{\eps_k} (t,x) h (t)\vphi(x)^4  \, dx\, dt\nonumber \\
&  \qquad + \int_0^{T} \langle T^{\eps_k}(t), \vphi^4\rangle h(t) \,dt
\end{align*}
and passing to the limit along an appropriate subsequence, we deduce
$$
-\int_0^T \int_\Omega \rho (t,x)h'(t) \vphi(x)^4 \, dx  \, dt = \int_\Omega \rho_{in} \vphi(0,x) \, dx
+\frac{1}{3}\int_0^T h(t)\left(\int_{\pa\{w(t)>0\}}|w_x(t)|^3\vphi(x)\,d\mathcal{H}^0(x)\right)\,dt
$$
which is the weak formulation of \eqref{eq:besteq}.
Since this equation shows that for all $t>0$, $\pa\rho(t,\cdot)$ is a measure supported on $\pa\{w(t)>0\} = \pa \Sigma(t)$, the monotonicity of $\Sigma$ implies that $\rho = 0$ in $\Sigma(t)^c$ and so 
$\rho(t) = \chi_{\Sigma(t)}$.
\end{proof}

\begin{proof}[Proof of Lemma \ref{lem:cond}]
We define $\mu^k = u^{\eps_k} (u^{\eps_k}_{xx})^2$ which is bounded in $L^1((0,T)\times\Omega)$ and therefore converges (up to a subsequence) weakly to a measure $\mu \in \mathcal M((0,T)\times\Omega)$.
For every test function $\psi(t,x)$, we can prove (see \eqref{eq:gg}) that for almost every $t\in(0,T)$,
\begin{equation}\label{eq:muk}
\liminf_{k\to\infty} \int_\Omega \mu^k(t,x) \psi(t,x)\, dx \geq \int_{\{w>0\}} w (w_{xx})^2 \psi(t,x) \, dx.
\end{equation}
Indeed, for all $\eta>0$, we have 
$$ \liminf_{k\to\infty}  \int_\Omega \mu^k(t,x) \psi(t,x)\, dx\geq 
\liminf_{k\to\infty}  \int_{\{w>\eta\}} \mu^k(t,x) \psi(t,x)\, dx
=  \int_{\{w>\eta\}} w (w_{xx})^2 \psi(t,x) \, dx
$$
where the last equality follows from the uniform convergence of $u^{\eps_k}$ and \eqref{eq:limh} 
which provides the strong convergence of $u^{\eps_k}_{xx}$ to $w_{xx}$ in the set $\{w>\eta\}$.

Using Fatou's lemma, \eqref{eq:muk} implies:
$$
\liminf_{k\to\infty} \int_0^T\int_\Omega \mu^k(t,x) \psi(t,x)\, dx \, dt\geq \int_0^T\int_{\{w>0\}} w (w_{xx})^2 \psi(t,x) \, dx\, dt$$
Since the sequence on the left converges to $\int\int \mu \psi$, we deduce that
$ \mu -  w (w_{xx})^2\chi_{\{w>0\}} $ is a non-negative measure.
Assumption \eqref{eq:cond2} says that this measure has zero mass, so we deduce
\begin{equation}\label{eq:mu} \mu =   w (w_{xx})^2\chi_{\{w>0\}} .
\end{equation}

We start by showing $L^1$ convergence in time \eqref{eq:L1convergence}. To simplify, we denote
$$f_k(t):=\int_\Omega u^{\eps_k} (u^{\eps_k}_{xx})^2 \, dx, \qquad\mbox{and}\qquad f(t) :=  \int_{\{w>0\}} w (w_{xx})^2 \, dx.$$
We know (see \eqref{eq:muk}) that
$$\liminf_{k\to\infty} f_k(t) \geq f(t)\qquad \forall t\in(0,T)$$
and so (using Fatou)
$$ \liminf_{k\to \infty} \int_0^T f_k(t)\, dt \geq \int_0^T \liminf_{k\to\infty} f_k(t)\, dt \geq \int_0^T f(t)\, dt.$$
But since  condition \eqref{eq:mu} implies 
$$ \lim_{k\to \infty} \int_0^T f_k(t)\, dt = \int_0^T \int_\Omega \mu \, dx dt= \int_0^T f(t)\, dt,$$
we must have equality in those inequalities, that is
$$ \liminf_{k\to\infty} f_k(t) = f(t) \quad \mbox{ a.e. }  t\in(0,T).$$
In particular the sequence $\inf_{l\geq k} f_l(t)$ is monotone increasing and converges to $f(t)$ almost everywhere. We deduce (by Beppo-Levy monotone convergence theorem) that 
$$ \lim_{k\to\infty} \int_0^T \inf_{l\geq k} f_l(t)\, dt =  \int_0^T f(t)\, dt$$
and so (using \eqref{eq:cond2} again)
$$  \lim_{k\to\infty} \int_0^T f_k(t)-  \inf_{l\geq k} f_l(t)\, dt = 0.$$
Therefore, by the triangle inequality and the two previous equations we obtain the $L^1$ convergence \eqref{eq:L1convergence}
$$
\lim_{k\to\infty}\int_0^T |f_k(t)-f(t)|\,dt\le \lim_{k\to\infty}\int_0^T |f_k(t)-\inf_{l\geq k} f_l(t)|\,dt+\lim_{k\to\infty}\int_0^T |f(t)-\inf_{l\geq k} f_l(t)|\,dt=0.
$$
Hence, there is a subsequence such that
$$  \lim_{k'\to\infty}  f_{k'}(t) = f(t)  \quad \mbox{ for a.e. }  t\in(0,T).$$
In particular, for a.e. $t\in(0,T)$
\begin{equation}\label{eq:bound}
    \sup_{k'}\int u^{\eps_{k'}}(t)|u^{\eps_{k'}}_{xx}(t)|^2\,dt<C.
\end{equation}
Up to a further subsequence, there exists a positive measure $\mu(t)$ such that
$$
u^{\eps_{k''}}(t)|u^{\eps_{k''}}_{xx}(t)|^2\rightharpoonup\mu(t).
$$
Using the same arguments as above, it follows that for a.e. $t\in(0,T)$
$$
\mu(t)=w(t) |w_{xx}(t)|^2 \chi_{\{w>0\}}.
$$
This implies that for every test function $\vphi\in \mathcal D(\Omega)$, we have the desired convergence
\begin{equation*}
\lim_{k'\to \infty }
 \int_\Omega u^{\eps_k'} (u^{\eps_k'}_{xx})^2 \vphi^4  \, dx
= \int_{\{w>0\}} w (w_{xx})^2 \vphi^4  \, dx \quad\mbox{ a.e. $t\in(0,T)$.}
\end{equation*}

The last part of the Lemma follows from the bound \eqref{eq:bound} and Lemma~\ref{lem:uv}.
\end{proof}

%%%%%%%%%%%%%%%%%%%%%%%%%%%%%%%%%%%%%%%%%%%%%%%%%%%%%%%%%%%
%%%%%%%%%%%%%%%%%%%%%%%%%%%%%%%%%%%%%%%%%%%%%%%%%%%%%%%%%%%
%%%%%%%%%%%%%%%%%%%%%%%%%%%%%%%%%%%%%%%%%%%%%%%%%%%%%%%%%%%
%%%%%%%%%%%%%%%%%%%%%%%%%%%%%%%%%%%%%%%%%%%%%%%%%%%%%%%%%%%%%%%%%%%%%%%%%%%%%%%%%%%%%%%%%%%%%%%%%%%%

\section{Proof of  Corollary \ref{cor:ODE for the integral}}\label{sec:cor}
\begin{proof}[Proof of  Corollary \ref{cor:ODE for the integral}]
We start by writing 
$$\Sigma(t)=\bigcup_{i= 1}^{N_0} (a_i,b_i)$$ 
with $N_0\in \NN\cup{+\infty}$ and $a_i<b_i<a_{i+1}$ for every $i$. 
According to Theorem~\ref{thm:u}, if $a_1\in\Omega$ and $b_{N_0}\in\Omega$ i.e. $\pa\Sigma(t)\subset \Omega$, then there exists a family of positive numbers $\{\gamma_i\}_{i\le N_0}$ such that $\sum_{i=1}^{N_0}\gamma_i=1$ and
$$
w(t,x)=
6\sum_{i\in I} \gamma_i \frac{(b_i-x)_+(x-a_i)_+}{(b_i-a_i)^3}.
$$
This explicit expression for $w$ then implies the following chain of inequalities
$$
\begin{array}{rcl}
\displaystyle\frac 1 3  \int_{\pa\{w>0\}} |w_x(t,x)| ^3  d \mathcal H^0(x)&\geq&\displaystyle 144\sum_{i=1}^{N_0}\gamma_i(b_i-a_i)^{-6}\\
&\geq&\displaystyle 144 |\Sigma(t)|^{-6}\\
 &\geq&\displaystyle 144\left(\int_\Omega \rho(t,x)\,dx\right)^{-6}.
\end{array}
$$
By the monotonicity of $\rho$ \eqref{eq:positivity}, we have that if $\pa\Sigma(t_0)\subset \Omega$, then $\pa\Sigma(t)\subset \Omega$ for every $t<t_0$.

On the other hand, if $a_1(t)\in\pa\Omega$ and $\Sigma(t)\ne\Omega$, then we have the alternative
$$
w(t,x)\chi_{(a_1,b_1)}=\gamma_1 6 \frac{(b_1-x)(x-a_1)}{(b_1-a_1)^3}
$$
or 
$$
w(t,x)\chi_{(a_1,b_1)}=\gamma_1 \frac{3}{2}\frac{(b_1-a_1)^2-(x-a_1)^2}{(b_1-a_1)^3},
$$
where we have used the boundary condition for $u^\eps_x(t,a_1)=0$ \eqref{eq:bc}, which is preserved if $w(t,a_1)>0$. Calculating the derivative explicitly and bounding, we have the inequalities
$$
\begin{array}{rcl}
\displaystyle\frac 1 3  \int_{\pa\{w>0\}} |w_x(t,x)| ^3  d \mathcal H^0(x)&\geq&\displaystyle 9\sum_{i=1}^{N_0}\gamma_i(b_i-a_i)^{-6}\\
&\geq&\displaystyle9|\Sigma(t)|^{-6}\\
 &\geq&\displaystyle 9\left(\int_\Omega \rho(t,x)\,dx\right)^{-6}.
\end{array}
$$
The result follows from the previous inequalities, the conclusion of Theorem~\ref{thm:sup} \eqref{eq:tannerglobal} and integrating.

\end{proof}

\appendix
\section{Existence: Proof of Proposition~\ref{prop:exist}}\label{ap:exist}
The existence of solutions for the  thin film equation in dimension one and when $n\in(0,3)$ is classical (see  in particular \cite{BF90,BP96}).
We present a proof of Proposition~\ref{prop:exist} here because some of the properties that we need for our analysis (in particular the equation for $\rho^\eps$), while not difficult to establish, are not included in the typical results found in the litterature.
However,
for simplicity, we will only write the complete proof in the case $n=1$; the case $n\in(1,2)$ is completely analogous and the case $n\in(0,1)\cup [2,3)$ requires a bit more care (in particular one has to regularize the initial data). We refer the interested reader to \cite{BP96} for further details.

\begin{proof}[Proof of Proposition~\ref{prop:exist}]
Since $\eps>0$ is fixed, we prove the existence of a solution to the  rescaled equation (see remark \ref{rem:1}):
\begin{equation}\label{eq:tf0}
\pa_t u + \pa_x (u(1+u^2)\pa_{xxx} u )=0.
\end{equation}
%Fixing $\eps>0$, we show the existence of $u^\eps$ by an 
The basic idea for the proof of Proposition~\ref{prop:exist} follows the same
approximation argument found for example in \cite{BF90}. 
However we do need to check carefully that the additional properties that we need to carry out our analysis hold.

\paragraph{Regularization.}
First, we approximate equation \eqref{eq:tf0} by a uniformly parabolic equation:
We set $f(u):=|u|(1+u^2)$ and 
$$
f_\delta(u)=f(u)+\delta.
$$
Then (see \cite{BF90}), the regularized equation
\begin{equation}\label{eq:delta}
\left\{
\begin{array}{ll}
\pa_t u +\pa_x (f_\delta(u)\pa_{xxx}u)=0 & \mbox{ in } \Omega\times (0,\infty) \\[5pt]
f_\delta(u) \pa_{xxx}u = 0,\qquad \pa_x u=0  & \mbox{ on } \pa \Omega \times (0,\infty)\\[5pt]
u(x,0) = u^\delta_{in}(x) & \mbox{ in } \Omega 
\end{array}
\right.
\end{equation}
has a unique classical (smooth) solution $u^\delta(t,x)$ for all $\delta>0$, where $u^\delta_{in}$ is a smooth positive approximation of $u_0$. Furthermore, this solution satisfies the mass conservation equality
\begin{equation}\label{eq:mass0}
\int_\Omega u^\delta(t,x)\, dx = \int_\Omega u^\delta_{in}(x)\, dx
\end{equation}
and the energy equality
\begin{equation}\label{eq:energydelta}
\frac{1}{2}\int_\Omega |\pa_x u^\delta(t,x)|^2\,dx+\int_0^t\int_\Omega f_\delta(u^\delta)|\pa_{xxx}u^\delta(s,x)|^2\,dx\,ds=\frac{1}{2}\int_\Omega |\pa_x u_{in}^\delta(x)|^2\,dx
\end{equation}
(obtained by multiplying the equation  by $-u^\delta_{xx}$).

\paragraph{Uniform convergence of $u^\delta$.}
Equalities \eqref{eq:mass0}
 and \eqref{eq:energydelta} imply that $u^\delta$ is bounded in $L^\infty(0,\infty ; H^1(\Omega))$
 and thus in $L^\infty(0,\infty;C^{1/2}(\Omega))$.
 A classical argument (see \cite{BF90}) then implies that $u^\delta $ is bounded in $C^{1/8}(0,\infty ; C^{1/2}(\Omega))$.
 Up to a subsequence, we can thus assume that 
 $$ u^\delta (t,x)\to u(t,x) \mbox{ uniformly  and in } L^\infty(0,T;H^1(\Omega))\mbox{-weak}.$$

\paragraph{Non-negative solutions.}
Because \eqref{eq:delta} is a fourth order equation, the solution $u^\delta$ may take negative values. In \cite{BF90}, it is shown, that up to a subsequence $u^\delta$ converges uniformly to a non-negative weak solution of the thin film equation $\eqref{eq:tfe}$ satisfying the boundary conditions \eqref{eq:bc} and the conservation of mass \eqref{eq:mass}. 

This argument makes use of another entropy estimate: We introduce the function
$$
H_\delta (s) = \int_s^\infty \int_r^\infty \frac{1}{f_\delta(u)}\, du\, dr$$
A simple computation using the fact that $ H_\delta''(s) = \frac{1}{f_\delta(s)}$ and the boundary conditions implies
\begin{equation}\label{eq:entropyH} 
\int_\Omega H_\delta(u^\delta(t,x))\, dx  +\int_0^t \int_\Omega |u^\delta_{xx}| ^2\, dx\, ds =\int_\Omega H_\delta(u^\delta_{in}(x))\, dx .
\end{equation}
Furthermore, we have 
$$ \lim_{\delta \to 0 } H_\delta (s) =  H_0(s):=\int_s^\infty \int_r^\infty \frac{1}{(1+u^2)|u|}\, du\, dr
=
\left\{
\begin{array}{ll}
  \arctan(\frac 1 s) - \frac 1 2   s   \ln(1+\frac{1}{s^2}) & \mbox{ for } s\geq  0\\
  +\infty & \mbox{ for } s< 0
  \end{array}\right.
  .
$$
Proceeding as in \cite{BF90}, \eqref{eq:entropyH} implies that $u(t,x)=\lim_{\delta\to0} u^\delta(t,x) \geq 0$
and that 
$$
\int_\Omega H_0(u(t,x))\, dx  +\int_0^t \int_\Omega |u_{xx}| ^2\, dx\, ds =\int_\Omega H_0(u_{in}(x))\, dx 
$$
which gives in particular that $u_{xx}\in L^2((0,\infty)\times\Omega)$.

\paragraph{Energy inequality.}
Next, we pass to the limit in \eqref{eq:energydelta}:
The function $g^\delta =\sqrt{f_\delta(u^\delta)}\pa_{xxx}u^\delta$ is bounded in $L^2((0,\infty)\times \Omega)$ and thus weakly converges to a function $g$. 
Using the fact  that $u^\delta$ converges uniformly we can then show that $g= \sqrt{f(u)}\pa_{xxx}u$ in the set $\{u>0\}$. 
Using the lower semicontinuity of the $L^2$ norm we can now pass  to the limit in \eqref{eq:energydelta} and  obtain \eqref{eq:energy}.

The uniform convergence of $u^\delta$ implies  
\begin{equation}\label{eq:fdeltalim}
f_\delta(u^\delta) u_{xxx}^\delta  \rightharpoonup \sqrt{f(u)} g = 
\left\{
\begin{array}{ll} 
 f(u) u _{xxx} & \mbox{  in } \{u>0\}\\
0 &   \mbox{  in } \{u=0\}
\end{array}
\right. \mbox{ weakly in $L^2$.}
\end{equation}
With this, we can easily pass to the limit in the weak formulation of equation \eqref{eq:delta}.

\paragraph{Equation for $\rho(t,x)$ and entropy inequality}
This is the main novelty here, since the function $\rho$ does not appear in  
previous papers on this topic.
We recall that 
$$ \rho(t,x) = B(u(t,x))$$
and we introduce 
$$ \rho^\delta = B_\delta(u^\delta)$$
where 
$ B_\delta (s)$ is defined by
$$ B_\delta(s)= \int_0^s \int_r^\infty \frac{u}{f_\delta(u)}\, du\, dr.$$

We notice that $B^{\prime\prime}_\delta\to_{\delta\to0} B^{\prime\prime}$ in $L^1(\RR)\cap L^\infty(\RR)$, which implies that $B_\delta\to B$ uniformly on bounded sets. Therefore, we have the convergence $\rho^\delta \to \rho$ uniformly.

Next, a simple computation yield 
\begin{equation}\label{eq:tfe5} 
\pa_t \rho^\delta = T^\delta+\pa_x R^\delta
\end{equation}
where
\begin{align*}
T^\delta  &= - u^\delta  u_x^\delta u_{xxx}^\delta\\[5pt]
R^\delta &= - B_{\delta}^{\prime}(u^\delta)f_\delta(u^\delta) u_{xxx}^\delta.
\end{align*}
We can pass to the limit in $R^\delta$ by using \eqref{eq:fdeltalim}.
To pass to the limit in the definition of $T^\delta$, we first notice, 
proceeding as with \eqref{eq:fdeltalim}, that 
$$
u^\delta u_{xxx}^\delta  \rightharpoonup  h  = 
\left\{
\begin{array}{ll} 
u u _{xxx} & \mbox{  in } \{u>0\}\\
0 &   \mbox{  in } \{u=0\}
\end{array}
\right. \mbox{ weakly in $L^2$.}
$$
Furthermore, \eqref{eq:entropyH} implies that $u^\delta_x$ is bounded in $L^2(0,\infty;H^1(\Omega))$ and equation \eqref{eq:delta} gives $\pa_t u^\delta_x $ bounded in $L^2(0,\infty; H^{-2}(\Omega))$.
It follows that
\begin{equation}\label{eq:udeltax} 
u^\delta_x \to u_x \mbox{ strongly in $L^p((0,\infty)\times\Omega)$, for all $p\geq 1$.}
\end{equation}

This shows that 
$$ T^\delta \rightharpoonup T  = \left\{
\begin{array}{ll} 
u u_x u _{xxx} & \mbox{  in } \{u>0\}\\
0 &   \mbox{  in } \{u=0\}
\end{array}
\right. \mbox{ weakly in $L^2$.}
$$
and allows us to pass to the limit in \eqref{eq:tfe5}.
\medskip

It order to get the formula \eqref{eq:vphiT} for $T$, we first write (for $\vphi \in \mathcal D(\overline \Omega)$)
\begin{equation}\label{eq:vphiTdelta} 
\langle T^\delta , \vphi\rangle_{\mathcal D',\mathcal D}   =  \int_\Omega u^\delta (u^\delta_{xx})^2 \vphi \, dx - \frac 5 {6}\int_\Omega {u_x^\delta}^3 \vphi' \, dx- \frac 1 2 \int_\Omega u^\delta {u^\delta_x}^2 \vphi''\, dx.
\end{equation}
This equality follows from the fact that $u^\delta u^\delta_{xxx}=(u^\delta u^\delta_{xx})_x  - u^\delta_x u^\delta_{xx} $, and makes use of a couple of integration by parts.
In particular, it uses the fact that  the boundary terms
$$
u^\delta u^\delta_{x}u^\delta_{xx} \vphi \big|_{\pa\Omega}, \; \frac1 3 {u^\delta_x}^3 \vphi|_{\pa\Omega} \mbox{ and  } \frac 1 2 u^\delta {u^\delta_x}^2 \vphi'|_{\pa\Omega} 
$$
vanish, due to the Neumann boundary conditions and the regularity of $u^\delta$.

We can pass to the limit in the last two terms of \eqref{eq:vphiTdelta}  thanks to \eqref{eq:udeltax}. For the first term, we note that  \eqref{eq:entropyH}  implies $(u^\delta_{xx})^2 $ is bounded in $L^1$ and thus converges to a measure $\mu$ in $[C^0]'$.
Since $u^\delta$ converges uniformly to $u$ we get
$$\int_0^\infty \int_\Omega u^\delta (u^\delta_{xx})^2 \phi(t,x) \, dx \, dt\to \int_0^\infty \int_\Omega u \mu  \phi(t,x) \, dx\, dt.$$
The  bound on the dissipation of energy \eqref{eq:energydelta} shows that $\mu = (u_{xx})^2$  in $\{u>0\}$ which allows us to conclude that
\begin{equation}\label{eq:limdfgt}
\int_0^\infty\int_\Omega u^\delta (u^\delta_{xx})^2 \phi(t,x) \, dx \, dt  \to\int_0^\infty \int_\Omega u (u_{xx})^2 \phi(t,x) \, dx \, dt
\end{equation}
for all $\phi \in \mathcal D([0,\infty)\times\overline \Omega)$.
We can thus pass to the limit in \eqref{eq:vphiTdelta} and get \eqref{eq:vphiT}.
\medskip

Finally, integrating \eqref{eq:tfe5}  and 
using the fact that $u^\delta$ is smooth and satisfies the boundary condition in \eqref{eq:delta}, 
we obtain
$$
\int_\Omega \rho^\delta(t,x)\,dx-\int_0^t\int_\Omega u^\delta | u_{xx}^\delta|^2 \,dxdt=\int_\Omega \rho_{in}^\delta(x)\,dx.
$$
and we can pass to the limit in this equality using \eqref{eq:limdfgt} to get \eqref{eq:entropy0}.

\end{proof}

\bibliographystyle{plain}
\bibliography{Bibliography}
{% additional braces for segregating \footnotesize
  \bigskip
  \footnotesize

 M.G.~Delgadino, \textsc{Department of Mathematics, Imperial College London, London SW7 2AZ, UK}\par\nopagebreak
  \textit{E-mail address:}\texttt{m.delgadino@imperial.ac.uk}

  \medskip

  A.~Mellet, \textsc{Department of Mathematics,
University of Maryland,
College Park, MD 20742
USA}\par\nopagebreak
  \textit{E-mail address:} \texttt{mellet@math.umd.edu}

}

\end{document}